\newtheorem{theorem}{Theorem}[section]
\newtheorem{lemma}[theorem]{Lemma}
\newtheorem{corollary}[theorem]{Corollary}
\newtheorem{proposition}[theorem]{Proposition}
\newtheorem{fact}[theorem]{Fact}
\theoremstyle{definition}
\newtheorem{definition}[theorem]{Definition}
\newtheorem{example}[theorem]{Example}
\newtheorem{remark}[theorem]{Remark}
\def\tp{\operatorname{tp}}
\def\cC{\mathcal C}
\def\cL{\mathcal L}
\def\cU{\mathcal U}
\def\cV{\mathcal V}
\def\C{\mathbb C}
\def\R{\mathbb R}
\def\Z{\mathbb Z}
\def\Th{\textnormal{Th}}
\def\GL{\textnormal{GL}}
\def\U{\textnormal{U}}
\def\T{\textnormal{T}}
\newcommand{\seq}{\subseteq}
\newcommand{\inv}{^{\text{-}1}}
\newcommand{\nv}{\text{-}}
\newcommand{\clqed}{\hfill$\dashv_{\text{\scriptsize{claim}}}$}
\newcommand{\dotminus}{ 
\!\!\buildrel\textstyle~.\over{\hbox{ 
\vrule height3pt depth0pt width0pt}{\smash-} 
}}
\title{Compactifications of pseudofinite and pseudo-amenable groups}
\author{Gabriel Conant}
\address{Department of Mathematics\\
The Ohio State University}
\email{conant.38@osu.edu}
\author{Ehud Hrushovski}
\address{Mathematical Institute\\
University of Oxford}
\email{ehud.hrushovski@maths.ox.ac.uk}
\author{Anand Pillay}
\address{Department of Mathematics\\
University of Notre Dame}
\email{apillay@nd.edu}
\thanks{GC was partially supported by NSF grants DMS-1855503 and DMS-2204787. AP was partially supported by NSF grant DMS-205427.}
\date{June 17, 2025}
\begin{document}

\maketitle

\begin{abstract}
We first give simplified and corrected accounts of some results in \cite{PiRCP} on  compactifications of pseudofinite groups. For instance, we use a classical theorem of Turing \cite{Turing} to give a simplified proof that any definable compactification of a pseudofinite group has an abelian connected component. We then discuss the relationship between Turing's work, the Jordan-Schur Theorem, and a (relatively) more recent result of Kazhdan \cite{Kazh} on approximate homomorphisms, and we use this to widen our scope from finite groups to amenable groups. In particular, we develop a suitable continuous logic framework for dealing with definable homomorphisms from pseudo-amenable groups to compact Lie groups. Together with the stabilizer theorems of \cite{HruAG,MOS}, we obtain a uniform (but non-quantitative) analogue of Bogolyubov's Lemma for sets of positive measure in discrete amenable groups. We conclude with brief remarks on the case of amenable topological groups.
\end{abstract}

\section{Introduction}

This paper starts as a commentary on the third author's paper \cite{PiRCP}, where he studied definable compactifications of pseudofinite groups, partly as a response to a question of Boris Zilber about whether (nonabelian) simple compact Lie groups can be ``structurally approximated" by finite groups. This notion of structural approximation can be translated into relatively standard notions, as was done in \cite{PiRCP}.  In any case, among the main results, Theorem 2.2 of \cite{PiRCP}  was a negative answer to Zilber's question in the case of {\em definable}  or {\em  internal} compactifications. A negative answer for absolute compactifications was later obtained by Nikolov, Schneider, and Thom in \cite{NST}. However, the proofs from \cite{PiRCP} and \cite{NST} both make  use of rather heavy results (described in Section \ref{sec:pillay}).  The starting point of the current paper is to give a short proof in the definable case that uses only classical tools. This is done in Theorem \ref{thm:pillay}. We also use a result of Kazhdan \cite{Kazh} to show that essentially the same proof works for ultraproducts of torsion amenable groups.  

The paper \cite{PiRCP} also contained a result (Theorem 3.1 there)  on the triviality of absolute compactifications of  ultraproducts  of  nonabelian finite simple groups. However, the argument relied on some results from \cite{StoTh} later found to be incorrect. In Section \ref{sec:simple}, we give a correct proof, but making use of the full negative solution to Zilber's question from   \cite{NST}. 

We now turn to the motivation for the rest of the paper.
The third author's result  from \cite{PiRCP} for definable compactifications later played a key role  in the  ``NIP arithmetic regularity lemma" from \cite{CPT}, and subsequent related work from \cite{CoBogo}. These papers focused on proving asymptotic results for subsets of finite groups satisfying various special assumptions, and \cite{PiRCP} was used to access ``Bohr neighborhoods" defined via maps to the torus while working in a  nonabelian environment. In order to make various technical aspects of these arguments feasible for transfer with {\L}o{\'s}'s Theorem, \cite{CPT} and \cite{CoBogo} also used a result of Alekseev, Glebski\u{\i}, and Gordon \cite{AGG} on approximate homomorphisms from finite groups to compact Lie groups. In fact, this result was first proved by Kazhdan \cite{Kazh} in the setting of approximate homomorphisms from amenable groups to unitary groups. 
 
 In the context of (arithmetic) regularity,  a subset (of a group) can be viewed as a $\{0,1\}$-valued function, but it is often convenient or necessary to expand one's viewpoint to include $[0,1]$-valued functions. However, the functional setting only compounds the technical challenges alluded to above. Therefore, here we develop a more robust foundation for applying Kazhdan's approximation theorem in these settings, which circumvents the somewhat ad hoc approach in previous sources. In particular, we will view definable compactifications as objects of continuous logic, and prove the following result. 

\begin{theorem}\label{thm:main1}
Let $\cL$ be a language in either classical or continuous logic, which extends the language of groups.
Let $G=\prod_{\cU}G_i$ be an ultraproduct of a family $\{G_i:i<\omega\}$ of $\cL$-structures expanding amenable groups. Suppose $\tau\colon G\to K$ is a definable homomorphism, with $K$ a compact Lie group. Then there are homomorphisms $\tau_i\colon G_i\to K$, for $i<\omega$, such that $\tau=\lim_{\cU}\tau_i$.
\end{theorem}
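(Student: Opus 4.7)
The plan is to combine the continuous-logic version of \L o\'s's theorem with Kazhdan's approximate-homomorphism theorem \cite{Kazh}, and then to correct targets from a unitary group back to $K$ itself. Fix a faithful embedding $K \hookrightarrow U(n)$, so that $\tau$ may be regarded as a definable $U(n)$-valued function on $G$. The continuous-logic version of \L o\'s's theorem (working with $U(n)$ either as a sort or via its matrix coefficients, polar-projecting if necessary) produces definable maps $\tau_i^{(0)} \colon G_i \to U(n)$ with $\sup_{g \in G_i}\|\tau_i^{(0)}(g) - \tau(g)\| \to_\cU 0$, i.e.\ $\tau = \lim_\cU \tau_i^{(0)}$ uniformly. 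Since $\tau$ is an honest homomorphism, the continuous-logic sentence $\sup_{x,y}\|\tau(xy) - \tau(x)\tau(y)\| = 0$ holds on $G$; by \L o\'s this forces $\sup_{x,y \in G_i}\|\tau_i^{(0)}(xy) - \tau_i^{(0)}(x)\tau_i^{(0)}(y)\| \to_\cU 0$, so $\tau_i^{(0)}$ is a $\delta_i$-approximate homomorphism into $U(n)$ with $\delta_i \to_\cU 0$.

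Amenability of each $G_i$ now brings Kazhdan's theorem into play: it supplies genuine homomorphisms $\sigma_i \colon G_i \to U(n)$ with $\sup_{x \in G_i}\|\sigma_i(x) - \tau_i^{(0)}(x)\| \to_\cU 0$. If Kazhdan's theorem were invoked directly for the compact Lie target $K$, we would already be done; this is morally the content of the finite-to-compact-Lie version of Alekseev, Glebski\u{\i}, and Gordon \cite{AGG} flagged in the introduction, and one natural route is to prove (or extract) such a version in the amenable setting and apply it.

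Absent that, the remaining and principal step is to replace each $\sigma_i \colon G_i \to U(n)$ by a homomorphism $\tau_i \colon G_i \to K$ still close to $\tau_i^{(0)}$. Since $\sigma_i(G_i)$ lies in a vanishing Hausdorff neighborhood of $K$ inside $U(n)$, I would invoke the following rigidity of compact Lie subgroups: for $K \subseteq U(n)$ compact Lie, there exists $\eta > 0$ such that every closed subgroup of $U(n)$ within Hausdorff distance $\eta$ of $K$ is conjugate to a subgroup of $K$ by some $u \in U(n)$ with $\|u - I\| = O(\eta)$ (a Montgomery--Zippin-style statement). Applied to $\overline{\sigma_i(G_i)}$, this yields elements $u_i \to_\cU I$, and the maps $\tau_i(x) := u_i \sigma_i(x) u_i^{-1}$ are the desired homomorphisms into $K$ with $\tau = \lim_\cU \tau_i$. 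The main obstacle is precisely this target-correction step, which genuinely uses the Lie structure of $K$ inside $U(n)$; a secondary subtlety is ensuring that all approximations transferred through \L o\'s are in sup-norm rather than just pointwise, which is why the continuous-logic framework (with $\sup_{x,y}$ as a primitive quantifier) is essential.
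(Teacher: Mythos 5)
Your skeleton (transfer the multiplicativity defect by {\L}o\'s, then invoke Kazhdan over the amenable factors) is the same as the paper's proof of Theorem \ref{thm:mainLKt}, but the two steps you treat as routine are where the actual content lies, and each has a gap. First, {\L}o\'s's theorem does not by itself ``produce'' definable maps $\tau_i^{(0)}\colon G_i\to\U(n)$ with $\tau=\lim_{\cU}\tau_i^{(0)}$: a definable map on the ultraproduct is a continuous function on the type space of $\Th(G)$, and to induce maps on the individual factors one must extend it to the space of all types of $\cL$-structures. In classical logic this uses zero-dimensionality of the type space (Lemma \ref{lem:Kura}, Proposition \ref{prop:DLmaps}); in continuous logic it uses Tietze extension together with a retraction of a neighborhood onto the target, and it can genuinely fail when the target is not an absolute neighborhood retract (Proposition \ref{prop:CLmaps} and the example of Hanson mentioned after it). Your aside ``polar-projecting if necessary'' is in fact the right repair for $\U(n)$---polar decomposition is exactly such a retraction---but you still need the argument that the extended maps land in the domain of that retraction for $\cU$-many $i$, and you need the two-sorted $\cL_\tau^K$-framework (with moduli of uniform continuity) before the sentence $\sup_{x,y}d(\tau(xy),\tau(x)\tau(y))$ may be transferred; as written, the quantity $\sup_{g\in G_i}\|\tau_i^{(0)}(g)-\tau(g)\|$ is not even meaningful, since $\tau$ is not defined on $G_i$.

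Second, the ``target-correction'' step is both avoidable and, as stated, incorrect. It is avoidable because Theorem \ref{thm:Kaz}, i.e.\ Kazhdan's theorem in the form extended by Alekseev--Glebski\u{\i}--Gordon \cite{Kazh,AGG}, already applies to an arbitrary compact Lie target $K$ and amenable source (the amenable case is exactly the point of Remark \ref{rem:Kaz}); this is what the paper uses, so no conjugation into $K$ is ever needed. If you do take the $\U(n)$ route, your rigidity statement has the wrong hypothesis: since $\tau$ is not assumed to have dense image, $\overline{\sigma_i(G_i)}$ need not be Hausdorff-close to $K$---all you know is that it lies in a small neighborhood of $K$. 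The correct tool is the Montgomery--Zippin nearby-subgroups theorem (a compact subgroup of $\U(n)$ contained in a sufficiently small neighborhood of $K$ is conjugate \emph{into} $K$), and you genuinely need the refinement that the conjugators $u_i$ can be chosen close to the identity, since merely bounded $u_i$ would destroy $\lim_{\cU}u_i\sigma_i u_i^{-1}=\tau$. With these repairs your route can be completed, but it imports Lie-theoretic rigidity that the paper's argument does not require. One genuine simplification on your side: applying Kazhdan at the exact defect $\delta_i$ of $\tau_i^{(0)}$ works and bypasses the paper's diagonalization over tolerance levels via a countably incomplete ultrafilter.
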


See Theorem \ref{thm:mainLKt} for a restatement and proof of this result. Our approach first passes through the general situation of an ultraproduct $M=\prod_{\cU}M_i$ (with $\cL$ arbitrary) and a definable map $f\colon M\to C$ for some (compact) space $C$. In Sections \ref{sec:maps1} and \ref{sec:maps2} we take the opportunity to spell out some possibly folkloric facts about this situation, with an emphasis on the difference between when $\cL$ is classical versus continuous.

The key advantage to Theorem \ref{thm:main1} is the potential to treat $\tau$ syntactically as a function symbol in continuous logic, and thus transfer first-order statements to the $\tau_i$'s  directly without further approximation lemmas (c.f., \cite{CPT,CoBogo}). More precisely, under some  extra assumptions, one can  realize the two-sorted structure $(G,K,\tau)$ as an ultraproduct $\prod_{\cU}(G_i,K,\tau_i)$ (see Corollary \ref{cor:mainLKt}). 

Our primary motivation for allowing a continuous language $\cL$  in Theorem \ref{thm:main1} is for use in later applications (by the first and third author) to arithmetic regularity theorems in the functional setting discussed above; see \cite{CP-AVSAR}. However, as an illustration in the case when $\cL$ is a language in classical first-order logic, we will combine Theorem \ref{thm:main1} with the stabilizer theorem of the second author \cite{HruAG} to prove the following version of Bogolyubov's Lemma for positive measure sets in amenable groups (see Theorem \ref{thm:amenBogo}).

\begin{theorem}\label{thm:amenBogo0}
Fix $\alpha>0$ and a function $\epsilon\colon\R^+\times\Z^+\to \R^+$. 
Let $G$ be an amenable group with a left-invariant measure $\mu$. Suppose $A\seq G$ is such that $\mu(A)\geq \alpha$. Then there is a $(\delta,\U(n))$-Bohr neighborhood $B\seq G$, with $\delta\inv,n\leq O_{\alpha,\epsilon}(1)$, such that:
\begin{enumerate}[$(i)$]
\item  $B\seq (AA\inv)^2$, 
\item $AAA\inv$ contains a translate of $B$, and 
\item $\mu(B\backslash AA\inv)<\epsilon(\delta,n)\mu(B)$. 
\end{enumerate}
\end{theorem}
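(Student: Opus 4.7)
The plan is a standard compactness-and-ultraproduct argument that uses the stabilizer theorem to produce a type-definable subgroup, and then Theorem~\ref{thm:main1} to transfer the resulting Bohr neighborhood back to the original groups. Suppose for contradiction the statement fails for some $\alpha>0$ and function $\epsilon$. For each $k<\omega$ fix an amenable counterexample: a group $G_k$ with invariant measure $\mu_k$ and a subset $A_k\seq G_k$ with $\mu_k(A_k)\geq\alpha$ such that no $(\delta,\U(n))$-Bohr neighborhood of $G_k$ with $\delta\inv,n\leq k$ satisfies $(i)$-$(iii)$. Form the ultraproduct $G=\prod_{\cU}G_k$ in a suitable continuous language that includes a predicate for $A=[A_k]_{\cU}$ and records the Keisler measure $\mu=\lim_{\cU}\mu_k$ on definable sets; then $G$ is pseudo-amenable and $\mu(A)\geq\alpha$.

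Apply the stabilizer theorem of \cite{HruAG,MOS} to $A$ in $G$: this produces a type-definable subgroup $H\leq G$ of bounded index with $H\seq(AA\inv)^2$, a coset $aH\seq AAA\inv$ for some $a\in A$, and the generic-containment fact that $H\backslash AA\inv$ is $\mu$-null. The quotient $G/H$ is a compact Hausdorff group, hence by Peter-Weyl an inverse limit of compact Lie groups, each of which embeds into some $\U(n)$. Projecting $G/H$ onto a sufficiently fine compact Lie quotient $K\hookrightarrow\U(n)$ and composing with $G\to G/H$ yields a definable homomorphism $\tau\colon G\to K$. For a small open $\delta$-ball $W\seq K$ around $1$, set $B:=\tau\inv(W)$. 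By saturation and appropriate choice of $K,W$, the containments $B\seq(AA\inv)^2$ and $aB\seq AAA\inv$ hold (since $H$ is the intersection of the $\tau\inv(W)$ and both ambient sets are definable), while $\mu(B\backslash AA\inv)<\epsilon(\delta,n)\mu(B)$ follows because $\mu(H\backslash AA\inv)=0$ forces that ratio to tend to $0$ as $W$ shrinks.

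Finally, Theorem~\ref{thm:main1} yields homomorphisms $\tau_i\colon G_i\to K$ with $\tau=\lim_{\cU}\tau_i$; as in Corollary~\ref{cor:mainLKt} this realises the expanded structure $(G,K,\tau,A,\mu)$ as the ultraproduct of the $(G_i,K,\tau_i,A_i,\mu_i)$ in continuous logic. {\L}o{\'s}'s theorem then transfers the three properties of $B$ to $B_i:=\tau_i\inv(W)$, so that for $\cU$-almost every $i$ the set $B_i$ is a $(\delta,\U(n))$-Bohr neighborhood of $G_i$ satisfying $(i)$-$(iii)$; this contradicts the choice of $G_k$ once $k=i\geq\max(\delta\inv,n)$. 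The main obstacle is condition $(iii)$: the tolerance $\epsilon(\delta,n)$ depends on the parameters of the very Bohr neighborhood used to realise it, and the apparent circularity is broken only by the null-set content of the stabilizer theorem, combined with the fact that the continuous-logic framework of Theorem~\ref{thm:main1} genuinely supports transferring the ratio inequality $\mu_i(B_i\backslash A_iA_i\inv)<\epsilon(\delta,n)\mu_i(B_i)$ rather than only first-order set containments.
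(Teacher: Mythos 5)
Your overall architecture does match the paper's: a sequence of counterexamples, an ultraproduct carrying the set $A$ and the measure, the stabilizer theorem of \cite{HruAG,MOS} producing a bounded-index type-definable subgroup contained in $(AA\inv)^2$ with a coset inside $AAA\inv$ and with its complement in $AA\inv$ controlled by null sets, Peter--Weyl to pass to a compact Lie quotient $K\seq\U(n)$, and then Theorem \ref{thm:main1} / Corollary \ref{cor:mainLKt} plus {\L}o\'{s} to descend to the factors. However, there is a genuine gap at exactly the step you flag as the main obstacle, namely condition $(iii)$. The set $B=\tau\inv(W)$ is the preimage of an open ball under a definable map; it is not a definable set, so neither $\mu(B)$ nor $\mu(B\backslash AA\inv)$ is the value of any formula in the structure $(G,K,\tau,A,\mu)$, and the strict ratio inequality $\mu(B\backslash AA\inv)<\epsilon(\delta,n)\mu(B)$ is not a condition that {\L}o\'{s}'s theorem can transfer. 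The framework of Theorem \ref{thm:main1} lets you transfer statements built from $\tau$ and genuinely definable sets; it does not ``support transferring the ratio inequality'' as you assert, and even formulating that ratio in the ultraproduct requires an outer-measure reading whose comparison with $\lim_{\cU}\mu_i$ goes in the unhelpful direction for open-condition sets. As written, the circularity you identify is not broken.

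The missing ingredient is the a priori largeness of Bohr neighborhoods. In the paper's proof, the stabilizer theorem yields a \emph{definable} set $Z$ with $\mu(Z)=0$ such that $\Gamma\backslash\widetilde{A}\widetilde{A}\inv\seq\widetilde{Z}$, and compactness gives $\gamma>0$ with: $d(\tau(x),1)<\gamma$ implies $x\in(AA\inv)^2\cap((AA\inv)\cup Z)$, plus the analogous statement for the translate (after moving the coset representative into $G$ using density of $\tau(G)$ in $K$). What is transferred by {\L}o\'{s} is only (a) these conditional containments, expressible since $\tau$ is a function symbol and $(AA\inv)^2$, $AA\inv$, $Z$, $AAA\inv$ are definable, and (b) the numerical statement that $\mu_s(Z_s)$ is smaller than $\epsilon(\delta,n)$ times the universal lower bound on the measure of any $(\delta,\U(n))$-Bohr neighborhood. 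Condition $(iii)$ for $B_s=\{x\in G_s: d(\tau_s(x),1)<\delta\}$ is then deduced inside each $G_s$ with no transfer of $\mu_s(B_s)$ at all, because every $(\delta,\U(n))$-Bohr neighborhood in an amenable group is generic with genericity controlled by the covering numbers of $\U(n)$ (Lemma \ref{lem:bigBohr}, Corollary \ref{cor:Bohrbound}), hence has measure bounded below in terms of $\delta$ and $n$ only. Your proposal never invokes this genericity/covering-number bound, and without it the derivation of $(iii)$ does not go through; supplying it (and routing the null-set information through the definable set $Z$ rather than through $\mu(B\backslash AA\inv)$ directly) is precisely what is needed to complete your argument. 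A smaller point: the stabilizer theorem should be applied in a sufficiently saturated elementary extension $\widetilde{G}\succ G$, with the S1-ideal of non-wide $G$-definable sets, rather than ``to $A$ in $G$'' directly.
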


Bohr neighborhoods are defined and discussed in greater generality at the start of Section \ref{sec:unitary}. 
Here $\U(n)$ denotes the unitary group of degree $n$. If one restricts to the special cases of  finite groups or abelian groups, then $\U(n)$ can be replaced by the $n$-dimensional torus $\T(n)$ (possibly at the cost of passing to a normal subgroup of uniformly bounded index; see Proposition \ref{prop:UtoT}).  When $G$ is finite and abelian,  conclusion $(i)$ of the previous theorem is the statement of Bogolyubov's Lemma, as originally proved by Ruzsa in \cite{Ruz94} (with explicit  bounds). The extension to arbitrary finite groups was proved by the first author in \cite{CoBogo} (without explicit bounds).  In \cite{BjGr}, Bjorklund and Griesmer obtain quantitative  results closely related to conclusion $(ii)$ of Theorem \ref{thm:amenBogo0} with  bounds on the number of irreducible representations defining the relevant Bohr neighborhood, but not on their degrees.

 We will conclude Section \ref{sec:unitary} with some preliminary remarks on the extension of these results to  amenable topological amenable groups (see Proposition \ref{prop:topamen}).

\subsection*{Acknowledgements} We thank the referee for providing several  corrections and suggestions for revision. The first author thanks James Hanson for many helpful discussions, and for pointing out the need to work with absolute neighborhood retracts in Proposition \ref{prop:CLmaps}. 

\section{Definable compactifications, Turing, and Kazhdan}\label{sec:pillay}

In \cite{PiRCP}, the third author proved that if $G$ is a pseudofinite first-order structure expanding a group, then the connected component of any definable (group) compactification of $G$ is abelian. Shortly after, this result was generalized to \emph{any} compactification of an (abstract) pseudofinite group by Nikolov, Schneider, and Thom \cite{NST}. The argument in \cite{PiRCP} is short, but relies on the main structure theorem of Breuillard, Green, and Tao \cite{BGT} for finite approximate subgroups of arbitrary groups. The proof of the more general result in \cite{NST} is also not very long, but relies on an intricate result of Nikolov and Segal  \cite[Theorem 1.2]{NiSe} on  commutators in finite groups, which in turn relies on the classification of finite simple groups.

In this section, we give a short proof of the above result in the definable case, which uses much more classical tools (compared to \cite{BGT} and \cite{NiSe}). In particular, the proof uses only the Peter-Weyl Theorem and a result of Alan Turing on finite approximations of Lie groups, which itself uses Jordan's Theorem (any finite subgroup of a general linear group contains a large abelian subgroup). This strategy lines up nicely with the proof in \cite{PiRCP} from Breuillard-Green-Tao \cite{BGT}. Indeed, the ``nonstandard" aspects of \cite{BGT} involve work of Gleason and Yamabe on locally compact groups, as well as commutator estimate techniques inspired by the proof of Jordan's Theorem due to Bieberbach and Frobenius. As a conclusion, they show that Hrushovski's  ``Lie model" \cite{HruAG} has a nilpotent connected component. From a distance, this explains the emergence of nilpotent structure (``nilprogressions") in finite approximate subgroups of arbitrary groups. However, in \cite{PiRCP}, these results are applied in the special case of \emph{compact} groups, in which case the connected component of the Lie model must be abelian. Thus it makes sense that there would be a more direct proof using Peter-Weyl and the work of Turing which, roughly speaking, are ``compact" analogues of Gleason-Yamabe and the Bieberbach-Frobenius steps of \cite{BGT}, respectively. 

To state Turing's result, we need the following definition.

\begin{definition}
Let $G$ be a group and suppose $K$ is a metrizable group with a bi-invariant metric $d$. Then a map $f\colon G\to K$ is an \textbf{$\epsilon$-approximate homomorphism} if $d(f(xy),f(x)f(y))\leq \epsilon$ for all $x,y\in G$. The (metric) group $K$ is \textbf{finitely $\epsilon$-approximable} if there is an $\epsilon$-approximate homomorphism from a finite group to $K$, whose image is an $\epsilon$-net in $K$. We say $K$ is \textbf{finitely approximable} if it is finitely $\epsilon$-approximable for all $\epsilon>0$. 
\end{definition}

In the context of the previous definition, one can check that finite approximability of $K$ does not depend on the choice of invariant metric $d$.

Recall that any compact Lie group can be equipped with some (not necessarily unique) bi-invariant metric compatible with the topology. 

\begin{theorem}[Turing \cite{Turing}]\label{thm:turing}
The connected component of any finitely approximable compact  Lie group is abelian.
\end{theorem}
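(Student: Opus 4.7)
The plan is to combine Jordan's theorem on finite subgroups of unitary groups with a Hausdorff-limit argument on finite abelian subgroups, mediated by a correction from approximate to genuine homomorphisms. First, by Peter--Weyl, fix a faithful representation $K\hookrightarrow \U(m)$, and equip $K$ with the bi-invariant metric $d$ coming from the operator norm on $\U(m)$; by the remark following the definition, finite approximability is independent of this choice. For each $n\geq 1$ we obtain a finite group $F_n$ and a $\frac{1}{n}$-approximate homomorphism $f_n\colon F_n\to K$ whose image is a $\frac{1}{n}$-net in $K$.

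The analytic input is a correction lemma: any sufficiently small approximate homomorphism from a finite (or more generally amenable) group into $\U(m)$ lies uniformly close to a genuine homomorphism. For finite groups this is classical, via an averaging argument over Haar measure on the source; in greater generality it is precisely Kazhdan's theorem. Applying this replaces each $f_n$ by a genuine homomorphism $\tilde f_n\colon F_n\to \U(m)$ with $\sup_{x\in F_n}d(f_n(x),\tilde f_n(x))\to 0$, so that $H_n:=\tilde f_n(F_n)$ is a finite subgroup of $\U(m)$ at Hausdorff distance $o(1)$ from $K$. Jordan's theorem then supplies an abelian normal subgroup $A_n\trianglelefteq H_n$ of index at most $J(m)$, where $J(m)$ depends only on $m$, together with a list of coset representatives $g_{n,1},\ldots,g_{n,J(m)}\in H_n$.

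Using compactness of $\U(m)$, pass to a subsequence along which $A_n\to A$ in Hausdorff metric and each $g_{n,i}\to g_i$. Then $A$ is a closed abelian subgroup of $\U(m)$; since $A_n\seq H_n$ sits $o(1)$-close to $K$, we get $A\seq K$. Given $x\in K$, find $h_n\in H_n$ with $h_n\to x$ and write $h_n=g_{n,i_n}a_n$ with $a_n\in A_n$; passing to a further subsequence so $i_n=i$ is constant yields $x\in g_iA$. Hence $K=\bigcup_{i=1}^{J(m)} g_iA$, so $[K:A]<\infty$, $A$ is clopen in $K$, and therefore $K^0\seq A$ is abelian. The main obstacle is ensuring in the correction step that the uniform distance $d(f_n,\tilde f_n)$ actually tends to $0$ (not merely stays small): without this the Hausdorff limit $A$ need not be contained in $K$, and Jordan's bound on finite subgroups of $\U(m)$ would no longer descend to any constraint on $K^0$.
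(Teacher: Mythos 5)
Your proof is correct and follows essentially the same route as the paper: Peter--Weyl to realize $K$ as a matrix group, Kazhdan's correction of approximate homomorphisms to genuine ones, Jordan's theorem to extract abelian subgroups of uniformly bounded index in the resulting finite groups, and a compactness/limiting argument to transfer this structure to $K$. The only (harmless) differences are that you apply the correction theorem with target $\U(m)$ rather than $K$ itself, which is why you need the extra check that the Hausdorff limit $A$ lies in $K$, whereas the paper uses the Alekseev--Glebski\u{\i}--Gordon form to get finite subgroups of $K$ directly and then replaces your subsequential Hausdorff limits by an ultralimit homomorphism $\prod_{\cU}G_m\to K$, concluding via the closure of the image of the abelian part.
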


\begin{remark}\label{rem:turing}
The previous result does not appear in \cite{Turing} exactly as we have stated it. So we take a moment to clarify two key differences.

First, Turing works exclusively with connected Lie groups, and thus only explicitly proves Theorem \ref{thm:turing} in the connected case. However, it is not hard to adjust his proof to obtain the more general statement. In particular, given a finitely approximable compact Lie group $K$, Turing's proof produces a set $X\seq K$ of positive (Haar) measure such that for any $x\in X$, the centralizer $C(x)$ has positive  measure. Up until this point, the proof does not use connectedness, and assuming $K$ is connected he then concludes $K$ is abelian. Without assuming connectedness, we can argue similarly to prove $K^0$ is abelian. First, for any $x\in X$, $C(x)$ is a closed subgroup of finite index (since it has positive measure), and thus contains $K^0$. So $X$ is contained in the closed subgroup $C(K^0)\coloneqq\bigcap_{a\in K^0}C(a)$. Since $X$ has positive measure, we again conclude that $C(K^0)$ has finite index, and hence contains $K^0$. Thus $K^0$ is abelian.

Second, Turing's definition of finite $\epsilon$-approximability requires injectivity of the $\epsilon$-approximate homomorphism involved. But this does not affect finite approximability for a compact Lie group $K$. In particular, we may clearly assume $K$ is infinite and thus positive dimensional. Now suppose $f\colon G\to K$ is an $\epsilon$-approximate homomorphism with $G$ finite, and $f(G)$ is an $\epsilon$-net. Since $K$ has positive dimension,  any open ball  is infinite. So for any $\epsilon^*>\epsilon$,  by perturbing $f$ one can construct an injective $\epsilon^*$-approximate homomorphism $f^*\colon G\to K$ with $f^*(G)\supseteq f(G)$. 
\end{remark}

Toward the end of this section, we will sketch a short proof of Theorem \ref{thm:turing} using Peter-Weyl, Jordan, and a result of Kazhdan \cite{Kazh} on approximate  homomorphisms. But first let us give the  proof of the aforementioned theorem from \cite{PiRCP}. 

 Let $M$ be a first-order structure. Given a topological space $X$, we say that a map $f\colon M^x\to X$ is \emph{definable} if for any closed $C\seq X$ and open $U\seq X$, with $C\seq U$, there is a definable set $D\seq M^x$ such that $f\inv(C)\seq D\seq f\inv(U)$. Equivalently,  $f$ is definable if induces a (necessarily unique) continuous function $f^*\colon S_x(M)\to X$ such that $f^*(\tp(a/M))=f(a)$ for all $a\in M^x$.\footnote{Here $S_x(M)$ denotes the space of complete types over $M$ in free variables $x$, i.e., the Stone space of the Boolean algebra of definable subsets of $M^x$.}

 Now suppose $G$ is a first-order structure expanding a group. A \emph{(group) compactification} of $G$ is a group homomorphism $\tau\colon G\to K$ with dense image, where $K$ is a compact Hausdorff group. We will focus on definable compactifications. The canonical example (which is the focus of \cite{PiRCP}) is when $G$ is sufficiently saturated, $K=G/G^{00}_M$ (with the logic topology) for some small model $M\prec G$, and $\tau$ is the quotient map.

\begin{theorem}[Pillay \cite{PiRCP}]\label{thm:pillay}
Suppose $G$ is a pseudofinite first-order expansion of a group and $\tau\colon G\to K$ is a definable compactification. Then the connected component of $K$ is abelian.
\end{theorem}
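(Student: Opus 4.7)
The plan is to reduce to the case when $K$ is a compact Lie group via the Peter-Weyl Theorem, to show that such a $K$ must be finitely approximable using pseudofiniteness of $G$ together with the definability of $\tau$, and then to invoke Theorem \ref{thm:turing} to conclude that $K^0$ is abelian.

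For the reduction, Peter-Weyl exhibits $K$ as an inverse limit $\varprojlim_\alpha K_\alpha$ of compact Lie quotients $K_\alpha = K/N_\alpha$ with $\bigcap_\alpha N_\alpha = \{e\}$. For each $\alpha$, the composition $\pi_\alpha \circ \tau\colon G \to K_\alpha$ is again a definable compactification: composing with a continuous surjection preserves the definability sandwich (preimages of closed, resp.\ open, sets remain closed, resp.\ open) and density of image. Since $\pi_\alpha(K^0)$ is connected and contains the identity, it lies in $K_\alpha^0$; if every $K_\alpha^0$ is abelian, then any commutator from $K^0$ maps to the identity in every $K_\alpha$ and must therefore be trivial. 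Hence it suffices to treat the case that $K$ is a compact Lie group.

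Assume $K$ is compact Lie, fix a bi-invariant metric $d$, and let $\epsilon > 0$. Pick an open cover $U_1, \ldots, U_n$ of $K$ by sets of diameter less than $\epsilon$, a closed refinement $C_1, \ldots, C_n$ still covering $K$, and points $k_j \in U_j \cap \tau(G)$ using density of $\tau(G)$; with mild care in the construction (arranging no $U_j$ to be redundant and building the $C_j$ around distinguished points of $U_j \setminus \bigcup_{i\neq j} U_i$) we may further ensure that each $C_j$ has nonempty interior disjoint from $\bigcup_{i\neq j} U_i$. Definability of $\tau$ then yields, for each $j$, a definable $E_j$ with $\tau\inv(C_j) \seq E_j \seq \tau\inv(U_j)$, and disjointifying $D_j = E_j \setminus \bigcup_{i<j} E_i$ produces a definable partition of $G$ with each $D_j$ nonempty (by the previous arrangement and density). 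For $g \in D_j$ we have $d(\tau(g), k_j) < \epsilon$, so bi-invariance of $d$ together with $\tau(xy) = \tau(x)\tau(y)$ forces $d(k_j k_\ell, k_m) < 3\epsilon$ whenever $x \in D_j$, $y \in D_\ell$, and $xy \in D_m$. Thus the conjunction, over the finitely many ``bad triples'' $(j,\ell,m)$ with $d(k_j k_\ell, k_m) \geq 3\epsilon$, of the formulas $\forall x\,\forall y\,(x \in D_j \wedge y \in D_\ell \to xy \notin D_m)$, together with the partition and non-emptiness assertions, is (after existentially quantifying the parameters defining the $D_j$) an $\cL$-sentence $\sigma$ satisfied in $G$. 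Pseudofiniteness of $G$ transfers $\sigma$ to some finite $\cL$-structure $H$; the resulting partition $\{D_j^*\}$ of $H$ defines $f^*\colon H \to K$ by $f^*(g) = k_j$ for $g \in D_j^*$, and by construction $f^*$ is a $3\epsilon$-approximate homomorphism with image the $\epsilon$-net $\{k_1, \ldots, k_n\}$. Since $\epsilon$ was arbitrary, $K$ is finitely approximable, and Theorem \ref{thm:turing} delivers that $K^0$ is abelian.

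The main technical hurdle is the third paragraph: translating the approximate-homomorphism content, which inherently references the external metric on $K$ and the externally-chosen points $k_j$, into a first-order sentence about $G$ that pseudofiniteness can digest. What makes this work is that once $\{k_j\}$ is fixed the list of bad triples is a finite external datum, so the homomorphism property of $\tau$ becomes a finite combinatorial constraint on the definable partition; pseudofiniteness then supplies a finite model in which the same combinatorics encodes an honest approximate homomorphism to $K$.
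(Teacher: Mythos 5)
Your overall route is exactly the paper's: reduce to compact Lie $K$ via Peter--Weyl, use definability of $\tau$ together with pseudofiniteness to transfer a finite package of approximate-homomorphism data to a finite structure, and finish with Turing's theorem. The one step that does not hold as written is the ``mild care'' claim that the cover can be arranged so that each $C_j$ has nonempty interior disjoint from $\bigcup_{i\neq j}U_i$. Irredundancy only yields a point $p_j\in U_j\setminus\bigcup_{i\neq j}U_i$; such a point may lie on the boundary of the other $U_i$'s, and the private set $U_j\setminus\bigcup_{i\neq j}U_i$ can even have empty interior (already for a cover of $[0,1]$ by the relatively open sets $U_1=[0,0.5)$, $U_2=(0.2,0.8)$, $U_3=(0.5,1]$, the private part of $U_2$ is the single point $0.5$). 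So ``building $C_j$ around a distinguished point'' need not produce a closed set whose interior avoids the other $U_i$'s, and this is not a throwaway detail: nonemptiness of every $D_j$ in $G$ (which is what your sentence $\sigma$ asserts and what density of $\tau(G)$ is supposed to supply via those private open sets) is exactly what makes the image of $f^*$ all of $\{k_1,\ldots,k_n\}$ and hence an $\epsilon$-net; without it the image is only $\{k_j:D_j^*\neq\emptyset\}$ and the net property is lost.

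The gap is repairable, and the paper's proof shows one way to avoid it entirely: do not disjointify. Keep the overlapping definable cover $X_t$ indexed by a $\delta$-net $\Lambda$ (each $X_t$ is nonempty in $G$ because $X_t\supseteq\tau^{-1}(C_t)$ with $C_t$ a closed ball and $\tau(G)$ dense), transfer in addition the first-order condition that intersecting pieces have nearby indices ($X_s\cap X_t\neq\emptyset\Rightarrow d(s,t)<4\delta$), and define $f$ on the finite group by an arbitrary choice function; then even though $f$ may miss some points of $\Lambda$, the overlap condition forces $f(G_\epsilon)$ to be a $5\delta$-net. Alternatively you can keep your partition but construct the cover so the private open sets genuinely exist, e.g.\ Voronoi-type cells of a $\delta$-separated $\delta$-dense set $\{p_1,\ldots,p_n\}$: take $C_j=\{x:d(x,p_j)\le d(x,p_i)\ \forall i\}$ and $U_j=\{x:d(x,p_j)<d(x,p_i)+\eta\ \forall i\}$ with $\eta<\delta$ small; then $C_j$ contains the nonempty open set $\{x:d(x,p_j)<d(x,p_i)-\eta\ \forall i\neq j\}$ (it contains $p_j$), which is disjoint from every $U_i$ with $i\neq j$. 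With either repair, the rest of your argument (the sentence over bad triples, the transfer, the $3\epsilon$ estimate, and the appeal to Theorem \ref{thm:turing}) goes through and matches the paper's proof in substance.
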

\begin{proof}
By the Peter-Weyl Theorem, $K$ is an inverse limit of compact Lie groups, with surjective projection maps. Since the composition of $\tau$ with any (continuous) projection map is still a definable compactification, it altogether suffices to assume that $K$ is a compact Lie group. Equip $K$ with some choice of a bi-invariant compatible metric $d$. We will show that $K$ is finitely approximable,\footnote{This argument follows the same idea as that of  \cite[Lemma 5.4]{CPT}, which itself was inspired by material from unpublished notes of the second author.} and thus has an abelian connected component by Theorem \ref{thm:turing}.

Fix $\epsilon>0$ and let $\delta=\epsilon/6$. Let $\Lambda$ be a finite $\delta$-net in $K$. Given $t\in\Lambda$, let $C_t$ (resp., $U_t$) be the closed (resp., open) ball  in $K$ of radius $\delta$ (resp., $2\delta$) centered at $t$. So we have $C_t\seq U_t$ for any $t\in \Lambda$. Since $\tau$ is definable, there is a definable set $X_t\seq G$ such that $\tau\inv(C_t)\seq X_t\seq \tau\inv(U_t)$. Since $\Lambda$ is a $\delta$-net, we have $K=\bigcup_{t\in\Lambda}C_t$, and thus $G=\bigcup_{t\in\Lambda}X_t$. Note also that each $X_t$ is nonempty since $\tau(G)$ is dense in $K$.   Moreover, if $a\in X_t$ then $d(\tau(a),t)<2\delta$ since $X_t\seq\tau\inv(U_t)$. From this, we obtain the following properties (explained below):
\begin{enumerate}[$(i)$]
\item For any $s,t\in\Lambda$, if $X_s\cap X_t\neq\emptyset$ then $d(s,t)<4\delta$.
\item For any  $r,s,t\in\Lambda$, if there are $a,b\in G$ such that $a\in X_r$, $b\in X_s$, and $ab\in X_t$, then $d(rs,t)<6\delta=\epsilon$.
\end{enumerate}

Indeed, for $(i)$ if there is some $a\in X_s\cap X_t$ then $d(s,t)\leq d(s,\tau(a))+d(\tau(a),t)<4\delta$. Similarly, if $r,s,t\in\Lambda$ and $a,b\in G$ are as in $(ii)$, then
\begin{align*}
d(rs,t) &\leq d(rs,r\tau(b))+d(r\tau(b),\tau(a)\tau(b))+d(\tau(a)\tau(b),t)\\
&= d(s,\tau(b))+d(r,\tau(a))+d(\tau(ab),t)<6\delta.
\end{align*}

Since $\Lambda$ is finite, we can express the previous data as a first-order sentence (with existential quantifiers over parameters needed to define $X_t$). Thus, since $G$ is pseudofinite, we obtain a finite group $G_\epsilon$ and a cover $G_\epsilon=\bigcup_{t\in\Lambda}X_{t,\epsilon}$ by nonempty subsets satisfying the analogues of $(i)$ and $(ii)$. Let $f\colon G_\epsilon \to \Lambda$ be any function such that $a\in X_{f(a),\epsilon}$ for all $a\in G_\epsilon$.  It follows from $(ii)$ that $f$ is an $\epsilon$-approximate homomorphism. Moreover, given  $t\in\Lambda$, we know there is some $a\in X_{t,\epsilon}$, and so $d(f(a),t)<4\delta$ by $(i)$. Since $\Lambda$ is a $\delta$-net in $K$, it follows that $f(G_\epsilon)$ is a $5\delta$-net (and thus an $\epsilon$-net) in $K$. 
So $f$ witnesses that $K$ is finitely $\epsilon$-approximable.
 \end{proof}

Next we aim to give a proof of Turing's theorem using  a result of Kazhdan on approximate homomorphisms, which was later re-proved by Alekseev, Glebski\u{\i}, and Gordon in a  stronger form (see Remark \ref{rem:Kaz}). This result will also allow us to widen our perspective  to amenable groups. Recall that a (discrete) group $G$ is \emph{amenable} if it admits a left-invariant finitely additive probability measure on all subsets. 
 
 \begin{theorem}[Kazhdan \cite{Kazh}; Alekseev, Glebski\u{\i}, Gordon \cite{AGG}]\label{thm:Kaz}
 Let $K$ be a compact Lie group. Then there is a bi-invariant compatible metric $d$ on $K$ and some $\epsilon_K>0$ such that for any amenable group $G$ and any $\epsilon$-approximate homomorphism $f\colon G\to K$, with $\epsilon<\epsilon_K$, there is a homomorphism $\tau\colon G\to K$ satisfying $d(f(a),\tau(a))\leq 2\epsilon$ for all $a\in G$.
\end{theorem}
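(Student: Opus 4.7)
My plan is to reduce to the case $K\seq\U(n)$ via Peter--Weyl, and then exploit amenability of $G$ via Riemannian averaging on $K$. By the Peter--Weyl Theorem, one can embed $K$ as a closed Lie subgroup of some $\U(n)$, and equip $\U(n)$ with a bi-invariant Riemannian distance, which restricts to a bi-invariant compatible metric $d$ on $K$. I would then choose $\epsilon_K>0$ small enough that all closed geodesic balls in $K$ of radius $3\epsilon_K$ are strongly convex, so that every probability measure supported in such a ball admits a unique Riemannian center of mass (Karcher mean) inside the ball.

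Now let $m$ be a left-invariant mean on $\ell^\infty(G)$, which exists by amenability. Given an $\epsilon$-approximate homomorphism $f\colon G\to K$ with $\epsilon<\epsilon_K$, the natural averaging would define $\tilde\tau(g)\in K$ to be the mean-theoretic Riemannian center of mass of the function $h\mapsto f(hg)f(h)^{-1}$; by the approximate-homomorphism property, the values of this function lie in the geodesic $\epsilon$-ball around $f(g)$, so the center of mass exists in $K$ and satisfies $d(\tilde\tau(g),f(g))\leq\epsilon$. Using left-invariance of $m$ together with bi-invariance of $d$, one checks that $\tilde\tau$ is within $O(\epsilon)$ of being a homomorphism. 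The main obstacle is that $\tilde\tau$ is not yet an exact homomorphism: the noncommutativity of $K$ leaves $O(\epsilon)$ residual error in the composition.

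To promote $\tilde\tau$ to an exact homomorphism I would iterate: set $\tau_0:=f$ and define $\tau_{k+1}$ from $\tau_k$ by the same averaging procedure. A careful analysis of the Karcher mean, using second-order Taylor estimates of the group multiplication in the Lie algebra $\mathfrak{k}$, should show that the approximation defect shrinks by a definite factor at each iteration, provided $\epsilon_K$ was chosen small enough. This would give uniform geometric convergence of the $\tau_k$ to a limit $\tau\colon G\to K$ that is an \emph{exact} homomorphism; the total displacement $d(f(a),\tau(a))$ is then bounded by a geometric series with first term $\epsilon$, which yields the required bound of $2\epsilon$ once all constants are made explicit.

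Alternatively, one can bypass the iteration and obtain $\tau$ in a single shot via the fixed-point characterization of amenability: every continuous affine $G$-action on a nonempty compact convex subset of a locally convex space has a fixed point. Concretely, apply this to the weak-$*$ closed convex hull of the $G$-orbit of the ``canonical'' approximate intertwiner $V_f\colon v\mapsto (x\mapsto f(x)^{-1}v)$ inside the Banach space $B(\C^n,\ell^\infty(G,\C^n))$, with the twisted $G$-action induced by $f$ and the left-regular action. A fixed point furnishes an exact intertwiner between an exact unitary representation $\tilde\tau\colon G\to \U(n)$ and the left-regular action, and the closeness to $V_f$ forces $\tilde\tau$ to lie within $\epsilon$ of $f$ in operator norm; standard rigidity arguments for compact subgroups of $\U(n)$ (or composition with the tubular retraction onto $K$, possible since $\epsilon<\epsilon_K$) then push $\tilde\tau$ into $K$, giving $\tau$ with the stated $2\epsilon$ bound.
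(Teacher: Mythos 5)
This theorem is not proved in the paper at all: it is quoted from Kazhdan \cite{Kazh} and Alekseev--Glebski\u{\i}--Gordon \cite{AGG} (see Remark \ref{rem:Kaz}), so your proposal can only be compared with those original arguments. Your iterative-averaging outline is indeed their strategy (Kazhdan, and Grove--Karcher--Ruh in the compact setting), but as written it has concrete gaps. First, your averaging kernel is on the wrong side: for $h\mapsto f(hg)f(h)^{-1}$, the approximate-homomorphism property together with bi-invariance of $d$ only gives that the values lie within $\epsilon$ of the conjugates $f(h)f(g)f(h)^{-1}$, which sweep out a neighborhood of the whole conjugacy class of $f(g)$; they are not confined to a small ball around $f(g)$, so the Karcher mean need not exist, let alone lie near $f(g)$. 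The correct kernel is $h\mapsto f(h)^{-1}f(hg)$, averaged with a left-invariant mean in $h$. Second, and more seriously, the entire content of the theorem is the step you dispose of with ``a careful analysis \ldots should show'': one must prove that the defect of the averaged map is $O(\epsilon^{2})$ --- not merely $O(\epsilon)$, which is trivially true of $f$ itself --- with constants uniform in $G$ and controlled well enough that the iteration converges and the displacements sum to at most $2\epsilon$. Without that contraction estimate nothing has been gained. There is also the unaddressed point that a finitely additive mean is not a measure, so the ``mean-theoretic center of mass'' must first be defined (e.g.\ as the unique minimizer of $x\mapsto m_h\bigl(d(x,f(h)^{-1}f(hg))^{2}\bigr)$ on a small convex ball) and its properties re-derived in that setting.

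The alternative one-shot argument does not work as stated. Since $f$ is not a homomorphism, the ``twisted $G$-action induced by $f$'' on $B(\C^n,\ell^\infty(G,\C^n))$ is not a group action --- its failure of associativity is exactly the defect you are trying to remove --- so the fixed-point characterization of amenability does not apply; and the claim that a fixed point ``furnishes an exact unitary representation $\tilde\tau$'' is circular, because producing an exact representation is precisely the problem. Finally, the passage from a homomorphism into $\U(n)$ whose image is $\epsilon$-close to $K$ to a homomorphism into $K$ cannot be done by composing with a tubular retraction (a retraction is not multiplicative); the correct argument, as in \cite{AGG}, is that the closure of the image is a compact subgroup contained in a small neighborhood of $K$ and is therefore conjugate into $K$ by an element near the identity (Montgomery--Zippin), which also preserves a bound of the required $2\epsilon$ type.
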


\begin{remark}\label{rem:Kaz}
The previous theorem is proved by Kazhdan in \cite{Kazh} for $K$ a unitary group with metric induced from the operator norm, in which case one can take $\epsilon_K=1/200$.\footnote{To prevent possible confusion, we note that an $\epsilon$-approximate homomorphism here corresponds to a ``$2\epsilon$-homomorphism" in \cite{Kazh}.} Alekseev, Glebski\u{\i}, and Gordon \cite{AGG} modify this result for an arbitrary compact Lie group $K$. The proof in \cite{AGG} is written for finite $G$, and the authors remark that the argument goes through for amenable $G$, as in Kazhdan's paper. We also note that Kazhdan's proof applies when $G$ is an amenable \emph{topological} group, in which case one assumes the approximate homomorphism is continuous (and obtains continuity in the resulting homomorphism).  
\end{remark}

One can  view Theorem \ref{thm:Kaz}   as a ``rigidity theorem" in the sense of additive combinatorics and combinatorial number theory (see also Tao's ICM lecture on rigidity phenomena in mathematics \cite{TaoICM}). The general idea is that  mathematical objects exhibiting approximate structure can be  approximated by objects with perfect structure. As a specific instance of this, Theorem \ref{thm:Kaz} says that an approximate homomorphism (of an amenable group to a compact Lie group) can be approximated by a genuine homomorphism. 

We now use Theorem \ref{thm:Kaz} to give a short proof of Turing's theorem, which still emphasizes the fundamental role of Jordan's Theorem.

\begin{proof}[\textnormal{\textbf{Proof of Theorem \ref{thm:turing}}}]
Suppose $K$ is a finitely approximable compact Lie group. Then by Theorem \ref{thm:Kaz}  we have, for all $m>0$, a finite subgroup $G_m\leq K$ which is a $\frac{1}{m}$-net in $K$. By the Peter-Weyl Theorem, $K$ is isomorphic to a closed subgroup of $\GL_n(\C)$ for some $n\geq 1$ (see \cite[Corollary IV.4.22]{Knappbook}).  So we may view each $G_m$ as a finite subgroup of $\GL_n(\C)$.  By Jordan's Theorem \cite{Jordan}, there is an integer $d\geq 1$ (depending only on $n$) such that for all $m>0$, $G_m$ contains an abelian subgroup $H_m$ of index at most $d$ (in $G_m$). Pick a nonprincipal ultrafilter $\cU$ on $\Z^+$, and let $G=\prod_{\cU}G_m$ and $H=\prod_{\cU} H_m$. Then $H$ is an abelian subgroup of $G$ of index at most $d$. Moreover, given $g\in G$, if we pick a representative $g=[(g_m)]_{\cU}$ then $(g_m)$ is a sequence in $K$ with a well-defined ultralimit  $\tau(g)\coloneqq\lim_{\cU}g_m$ in $K$ depending only on $g$. A routine verification then shows that $\tau\colon G\to K$ is a group homomorphism. Moreover, since each $G_m$ is a $\frac{1}{m}$-net and $\cU$ is nonprincipal, it follows that $\tau$ is surjective. Therefore $\tau(H)$ is an abelian subgroup of $K$ of index at most $d$. It follows that $K^0$ is contained in the closure of $\tau(H)$, which is still abelian.
\end{proof}

\begin{remark}
In the above proof of Theorem \ref{thm:turing}, we needed to replace approximate homomorphisms by actual homomorphisms in order to obtain genuine finite subgroups of $\GL_n(\C)$. In doing so, we realized $K$ as a compactification of $\prod_{\cU}G_m$. So it is worth pointing out that this latter conclusion can be obtained from approximate homomorphisms alone. In particular, suppose $K$ is a finitely approximable compact metric group. Then for all $m>0$ we have a finite group $G_m$ and a $\frac{1}{m}$-approximate homomorphism $f_m\colon G_m\to K$ whose image is a $\frac{1}{m}$-net. As in the above proof, we can choose a nonprincipal ultrafilter $\cU$ and define an ultralimit map $\tau\colon \prod_{\cU}G_m\to K$ such that $\tau(g)=\lim_{\cU} f_m(g_m)$. Then a similar routine verification shows that $\tau$ is a surjective homomorphism. In fact, $\tau$ is even definable in a suitable expansion of $\prod_{\cU}G_m$ by internal sets (as in Example \ref{ex:cyclic} below). Altogether, $K$ is a definable compactification of a pseudofinite group. So the fact that $K^0$ is abelian follows from   Theorem \ref{thm:pillay} (proved originally in   \cite{PiRCP} using different tools), and also the generalization to arbitrary compactifications from \cite{NST}. Indeed, the authors of \cite{NST} explicitly note that their work yields an alternate proof of Turing's theorem.
\end{remark}

Via the previous proof of Theorem \ref{thm:turing}, we obtain a proof of Theorem \ref{thm:pillay} that uses only the Peter-Weyl Theorem, Jordan's Theorem, and Theorem \ref{thm:Kaz}. Of these tools, only Jordan's Theorem directly references finite groups.  Schur \cite{SchurJST} later proved that any finitely-generated torsion subgroup of $\GL_n(\C)$ is finite. Using this, one can deduce the following generalization of Jordan's theorem to torsion groups (see \cite[Theorem 36.14]{CurRein} or \cite[Chapter 11]{TaoH5P}).

\begin{fact}[Jordan-Schur Theorem]\label{fact:JST}
For any $n\geq 1$ there is some $d\geq 1$ such that any torsion subgroup of $\GL_n(\C)$ contains an abelian subgroup of index at most $d$.
\end{fact}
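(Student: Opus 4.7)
The plan is to apply Jordan's Theorem locally to each finitely generated subgroup of a torsion $G \leq \GL_n(\C)$ (which is finite by Schur), and then to stitch the resulting local abelian subgroups together via an ultrafilter limit. Let $d = d(n)$ denote the Jordan constant for $\GL_n(\C)$, so that every finite subgroup of $\GL_n(\C)$ contains an abelian subgroup of index at most $d$. For each finite $S \seq G$, the subgroup $F_S \coloneqq \langle S \rangle$ is finitely generated and torsion, hence finite by Schur's theorem. By Jordan's Theorem applied inside $\GL_n(\C)$, I can pick an abelian subgroup $A_S \leq F_S$ with $[F_S : A_S] \leq d$. These $A_S$ need not cohere on overlaps, so the next move is to pass to a uniform ``limit.''

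Let $I$ be the directed poset of finite subsets of $G$ ordered by inclusion, and fix a non-principal ultrafilter $\cU$ on $I$ containing the cone $\{T \in I : S \seq T\}$ for every $S \in I$ (such $\cU$ exists by Zorn). Set
\[ A \coloneqq \{g \in G : \{S \in I : g \in A_S\} \in \cU\}. \]
That $A$ is a subgroup is routine: for $g,h \in A$, the collection of $S$ simultaneously containing $g, h, gh, g\inv$ (whence these all lie in $F_S$) and with $g,h \in A_S$ is $\cU$-large, and for such $S$ we have $gh, g\inv \in A_S$ because $A_S$ is a subgroup of $F_S$. That $A$ is abelian is the same argument, using that $A_S$ itself is abelian on a $\cU$-large set.

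The only nontrivial verification will be the index bound $[G:A]\leq d$, where a mild double application of pigeonhole is needed. Given any $g_1, \ldots, g_{d+1} \in G$, I first argue that a $\cU$-large set of $S$ contains each $g_i$ and each ratio $g_i g_j\inv$, placing all of them in $F_S$. For such $S$, since $|F_S/A_S| \leq d$, pigeonhole on the images of $g_1, \ldots, g_{d+1}$ in $F_S/A_S$ yields some $i \neq j$ with $g_i g_j\inv \in A_S$. Because there are only finitely many pairs $(i,j)$, a second appeal to $\cU$ (finite partitions of a $\cU$-large set refine to a $\cU$-large piece) isolates a single pair $(i_0,j_0)$ that works on a $\cU$-large set, so $g_{i_0} g_{j_0}\inv \in A$. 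Thus any $d+1$ elements of $G$ share a coset of $A$, giving $[G:A] \leq d$. The ``hard part'' is really only this bookkeeping: choosing $\cU$ fine enough to swallow arbitrary finite configurations and then collapsing the pigeonhole outcome onto one pair of indices. All the substantive content is already supplied by Schur and Jordan.
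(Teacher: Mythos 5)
Your proof is correct, and it follows exactly the route the paper indicates: Schur's theorem makes every finitely generated subgroup of the torsion group $G\leq\GL_n(\C)$ finite, Jordan's theorem gives a local abelian subgroup of index at most $d$, and your ultrafilter limit supplies the local-to-global step that the paper itself leaves to the cited references (Curtis--Reiner, Tao). The only cosmetic point is that non-principality of $\cU$ is neither needed nor (when $G$ is finite, where the statement is just Jordan's theorem) available -- all that matters is that $\cU$ extends the filter of cones; your verifications that $A$ is an abelian subgroup and the pigeonhole-plus-finite-partition argument giving $[G:A]\leq d$ are all sound.
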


The same proof then yields the following generalization of Theorem \ref{thm:pillay}.

\begin{theorem}\label{thm:Pilgen}
Suppose $G$ is a first-order structure elementarily equivalent to an ultraproduct of amenable torsion  groups, and $\tau\colon G\to K$ is a definable compactification. Then the connected component of $K$ is abelian.
\end{theorem}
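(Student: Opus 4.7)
The plan is to run the proof of Theorem \ref{thm:pillay} essentially verbatim up through the extraction of approximate homomorphisms, then close out using Kazhdan's theorem (Theorem \ref{thm:Kaz}) and Jordan--Schur (Fact \ref{fact:JST}) in place of the finite-group tools at the end of the proof of Theorem \ref{thm:turing}.

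First I would reduce to the case where $K$ is a compact Lie group, realized as a closed subgroup of $\GL_n(\C)$ via the Peter--Weyl Theorem, exactly as in the opening of the proof of Theorem \ref{thm:pillay}. I would equip $K$ with a bi-invariant compatible metric $d$ satisfying the conclusion of Kazhdan's theorem for some $\epsilon_K>0$. Then for any $\epsilon<\epsilon_K$, I would run the $\delta$-net and $X_t$ construction from the proof of Theorem \ref{thm:pillay}. The existence in $G$ of a cover by nonempty definable sets $X_t$ satisfying properties $(i)$ and $(ii)$ is a first-order sentence (with parameters defining the $X_t$); since $G$ is elementarily equivalent to an ultraproduct of amenable torsion groups, {\L}o{\'s}'s theorem produces a single amenable torsion group $G_\epsilon$ (a factor of that ultraproduct) admitting the same covering data, and hence an $\epsilon$-approximate homomorphism $f_\epsilon\colon G_\epsilon\to K$ whose image is an $\epsilon$-net in $K$.

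Next I would invoke Theorem \ref{thm:Kaz}, using the amenability of $G_\epsilon$, to obtain a genuine homomorphism $\tau_\epsilon\colon G_\epsilon\to K$ with $d(f_\epsilon(a),\tau_\epsilon(a))\leq 2\epsilon$ for all $a\in G_\epsilon$. The image $H_\epsilon\coloneqq\tau_\epsilon(G_\epsilon)$ is then a $3\epsilon$-net in $K$, and since $G_\epsilon$ is torsion, $H_\epsilon$ is a torsion subgroup of $\GL_n(\C)$. Fact \ref{fact:JST} then supplies an abelian subgroup $A_\epsilon\leq H_\epsilon$ of index at most some $d=d(n)$ independent of $\epsilon$.

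Finally, I would conclude by imitating the ultraproduct argument in the proof of Theorem \ref{thm:turing}: fix a nonprincipal ultrafilter $\cU$ on $\Z^+$, form $\mathbf{G}\coloneqq\prod_\cU G_{1/m}$ and $\mathbf{A}\coloneqq\prod_\cU A_{1/m}$, and define the ultralimit map $\hat\tau\colon\mathbf{G}\to K$ by $\hat\tau([(g_m)])=\lim_\cU \tau_{1/m}(g_m)$. A routine check shows that $\hat\tau$ is a homomorphism, and it is surjective because the sets $H_{1/m}$ are eventually arbitrarily dense in $K$. Since $[\mathbf{G}:\mathbf{A}]\leq d$, the image $\hat\tau(\mathbf{A})$ is an abelian subgroup of $K$ of index at most $d$, so $K^0$ is contained in its (abelian) closure, as required. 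The only substantive point to watch is the first-order encoding of properties $(i)$ and $(ii)$ so that {\L}o{\'s}'s theorem applies to extract $G_\epsilon$; once that is in place, amenability of $G_\epsilon$ is precisely what lets Kazhdan upgrade an approximate homomorphism to an actual one, and torsion of $G_\epsilon$ is precisely what lets Jordan--Schur replace Jordan's theorem, so the argument is a clean splicing of the two previous proofs.
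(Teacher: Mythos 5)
Your proposal matches the paper's intended argument exactly: the paper obtains Theorem \ref{thm:Pilgen} precisely by remarking that the proof of Theorem \ref{thm:pillay}, run through the Kazhdan-based proof of Theorem \ref{thm:turing}, goes through with Jordan's Theorem replaced by the Jordan--Schur Theorem (Fact \ref{fact:JST}), which is the splicing you carry out, with the transfer step now using elementary equivalence to an ultraproduct of amenable torsion groups in place of pseudofiniteness. The only cosmetic point is that your $\mathbf{A}=\prod_{\cU}A_{1/m}$ is a subgroup of $\prod_{\cU}H_{1/m}$ (whose elements lie in $K$) rather than of $\mathbf{G}$, so in the last step one should take the ultralimit $\lim_{\cU}$ of the $A_{1/m}$'s (equivalently, the image under $\hat\tau$ of the pullbacks $\tau_{1/m}\inv(A_{1/m})\leq G_{1/m}$), which is indeed an abelian subgroup of $K$ of index at most $d$, as you intend.
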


As a segue into the next section, we conclude this section with some consequences for ``pseudo-simple" groups. First we recall a well known fact.

\begin{proposition}\label{prop:easysimple}
Suppose $G$ is an infinite first-order structure elementarily equivalent to an ultraproduct of simple groups and let $\tau\colon G\to K$ be a definable compactification of $G$. Then $K$ is connected.
\end{proposition}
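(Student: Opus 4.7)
The plan is to reduce the problem to showing that no nontrivial finite group arises as a quotient of $G$ through a definable homomorphism. Since $K$ is a compact Hausdorff group, its identity component $K^0$ is a closed normal subgroup and the quotient $L := K/K^0$ is profinite, hence an inverse limit of its finite quotients $L/N$ as $N$ ranges over the open normal subgroups of $L$. Consequently, $K = K^0$ if and only if $\tau(G)$ maps trivially into every such $L/N$: indeed, if this holds then $\tau(G) \seq K^0$, and density of $\tau(G)$ in $K$ combined with closedness of $K^0$ forces $K = K^0$.

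Next I would fix an open normal $N \leq L$, set $F = L/N$ (a finite group), and study the composite definable homomorphism $\tau_N\colon G \to F$. Its kernel $H$ is a definable normal subgroup of $G$ of finite index $n \leq |F|$, and density of $\tau(G)$ forces $\tau_N$ to be surjective, so $n = |F|$. The remaining goal is to show $n = 1$.

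For this I would use that $G \equiv \prod_{\cU}G_i$ with each $G_i$ simple. The assertion ``there exist parameters $\bar{y}$ such that $\varphi(x,\bar{y})$ defines a normal subgroup of index exactly $n$'' can be written as a single first-order sentence in the language of $G$ (quantifying existentially over the parameter tuple, expressing normality by a universally quantified conjugation clause, and encoding the index via $n$ coset representatives). By \L o\'s's theorem, for $\cU$-many $i$ the group $G_i$ then contains a normal subgroup of index exactly $n$. Since each $G_i$ is simple, its only normal subgroups are $\{1\}$ and $G_i$, so if $n > 1$ we must have $|G_i| = n$ for $\cU$-many $i$. But then $\prod_{\cU}G_i$, and hence $G$, has cardinality at most $n$, contradicting that $G$ is infinite. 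Therefore $n = 1$, $\tau_N$ is trivial, and we conclude that $K$ is connected.

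The main point requiring care is checking that ``finite index $n$'' on a \emph{definable} normal subgroup is genuinely captured by a single first-order sentence of the language of $G$ (so that \L o\'s applies to transport the subgroup back to the factors $G_i$); the rest is bookkeeping with the profinite structure of $K/K^0$ and the standard fact that a definable map into a finite discrete space has definable point-preimages.
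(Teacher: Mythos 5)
Your proof is correct and follows essentially the same route as the paper: produce a proper definable finite-index normal subgroup of $G$ by pulling back a proper open normal subgroup of $K$ (the paper pulls back a clopen finite-index normal subgroup directly, you pass through the profinite quotient $K/K^0$ and its finite quotients), then rule it out by transferring the first-order statement ``there is a proper definable normal subgroup of index exactly $n$'' to the simple factors via elementary equivalence and {\L}o\'s, where simplicity forces the factors (and hence $G$) to have size $n$, contradicting infiniteness. The details you spell out (definability of point-preimages under a definable map to a finite discrete group, first-order expressibility of the index condition) are exactly what the paper's one-line ``this contradicts the assumptions on $G$'' leaves implicit.
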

\begin{proof}
If $K$ is not connected, then it has a proper clopen normal subgroup $C$ of finite index. So $\tau\inv(C)$ is a proper (by density of $\tau(G)$ in $K$) definable normal subgroup of $G$ of finite index. But this contradicts the assumptions on $G$.
 \end{proof}

\begin{corollary}\label{cor:fsg-def}
Suppose $G$ is an infinite first-order structure elementarily equivalent to an  ultraproduct of  simple amenable torsion groups, and let $\tau\colon G\to K$ be a definable compactification of $G$. Then:
\begin{enumerate}[$(a)$]
\item $K$ is abelian and connected.
\item If $G$ is nonabelian then $K$ is trivial.
\end{enumerate}
\end{corollary}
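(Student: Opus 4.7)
For part (a), the plan is straightforward: apply the two general-purpose tools already established in this section. Since $G$ is elementary equivalent to an ultraproduct of amenable torsion groups, Theorem \ref{thm:Pilgen} gives that $K^0$ is abelian. Since $G$ is also elementary equivalent to an ultraproduct of simple groups, Proposition \ref{prop:easysimple} gives that $K$ is connected, i.e., $K=K^0$. Combining these yields that $K$ is abelian and connected.

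For part (b), assume $G$ is nonabelian; the goal is to show $\tau$ is trivial, whence $K=\overline{\tau(G)}=\{e\}$. Since $K$ is abelian by part (a), $\tau$ kills all commutators: $\tau([y,z])=[\tau(y),\tau(z)]=e$ for all $y,z\in G$. It therefore suffices to show that every element of $G$ is a single commutator. For this I would invoke Ore's conjecture (the Liebeck--O'Brien--Shalev--Tiep theorem), which asserts that every element of a finite nonabelian simple group is a commutator. Since ``$G$ is nonabelian'' is first-order, $\cU$-almost every $G_i$ is a nonabelian simple group, so (for $G_i$ finite) Ore's theorem yields the first-order sentence $\forall x\,\exists y\,\exists z\,(x = yzy\inv z\inv)$ in each such $G_i$. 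By {\L}o{\'s}'s theorem it then holds in $\prod_{\cU}G_i$, and by elementary equivalence it holds in $G$. Writing any $g\in G$ as a commutator and applying $\tau$ finishes the proof.

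The main obstacle is justifying the uniform commutator-width-one property when the $G_i$'s are not assumed finite: Liebeck--O'Brien--Shalev--Tiep is proved only for finite nonabelian simple groups, while the hypothesis nominally permits infinite simple amenable torsion groups. An alternative route avoiding Ore proceeds via Theorem \ref{thm:main1}: after reducing to $K$ a torus through the inverse-limit structure of compact abelian connected groups, and after arranging $G$ as a literal ultraproduct (by passing to a suitable elementary extension), one writes $\tau=\lim_{\cU}\tau_i$ for genuine homomorphisms $\tau_i\colon G_i\to K$. Simplicity of each $G_i$ forces $\tau_i$ to have trivial or full kernel; a trivial kernel would embed $G_i$ into the abelian group $K$, so that $G_i\cong\Z/p\Z$ and in particular $G_i$ is abelian. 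The nonabelianness of $G$ then rules out all but the trivial case, giving $\tau_i$ trivial for $\cU$-almost every $i$ and hence $\tau$ trivial. The technical subtlety in this alternative approach is transferring Theorem \ref{thm:main1} from its literal-ultraproduct hypothesis to the elementary-equivalence setting of the corollary.
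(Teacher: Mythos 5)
Part $(a)$ is exactly the paper's argument (Proposition \ref{prop:easysimple} plus Theorem \ref{thm:Pilgen}), so no issues there. For part $(b)$, however, your primary route has a genuine gap, which you yourself flag: the Liebeck--O'Brien--Shalev--Tiep theorem (and likewise Wilson's bounded commutator-width result, Theorem \ref{thm:fsg2}) is a statement about \emph{finite} simple groups, while the hypothesis here only gives simple \emph{amenable torsion} groups $G_i$, which may be infinite (e.g., infinitary examples such as Hall's universal group or a finitary alternating group on an infinite set, both of which appear in Section \ref{sec:unitary}). There is no commutator-width-one (or even bounded-width) theorem available in that generality, so the {\L}o{\'s} transfer of $\forall x\,\exists y\,\exists z\,(x=yzy\inv z\inv)$ cannot get started on the $\cU$-large set of indices. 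Your fallback route via Theorem \ref{thm:main1} is sound in outline, but it leaves the essential step unresolved: Theorem \ref{thm:mainLKt} applies to a definable homomorphism on a literal ultraproduct (with a countably incomplete ultrafilter), whereas $\tau$ lives on a structure merely elementarily equivalent to one; transporting $(G,K,\tau)$ to such an ultraproduct is precisely the content of the last corollary of Section 4 (via Keisler--Shelah and the $\cL_\tau^K$ formalism), and it is not something to wave off as a subtlety.

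The paper avoids both difficulties with a lighter, $\epsilon$-at-a-time argument. After reducing to $K$ a compact Lie group by Peter--Weyl, fix $\epsilon>0$ and run the covering argument from the proof of Theorem \ref{thm:pillay}: the relevant first-order sentence holds in $G$, hence (by elementary equivalence and {\L}o{\'s}, together with the first-order statement that the group is nonabelian) in some \emph{nonabelian} simple amenable torsion group $H$, yielding an $\epsilon$-approximate homomorphism $f\colon H\to K$ whose image is an $\epsilon$-net. Kazhdan's theorem (Theorem \ref{thm:Kaz}) applies directly because $H$ is amenable --- no finiteness needed --- and produces a genuine homomorphism $\tau'\colon H\to K$ whose image is a $3\epsilon$-net. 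Since $H$ is nonabelian simple and $K$ is abelian by part $(a)$, $\tau'$ must be trivial (its kernel is normal, and a trivial kernel would make $H$ abelian), so the identity is a $3\epsilon$-net in $K$; letting $\epsilon\to 0$ gives $K$ trivial. In short: replace the appeal to Ore (which fails here) or to the full ultraproduct machinery of Section 4 (which you did not carry out) by applying Kazhdan to a single approximating group for each $\epsilon$, and use simplicity only through the elementary observation that a nonabelian simple group has no nontrivial abelian quotients.
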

\begin{proof}
Part $(a)$ is immediate from Proposition \ref{prop:easysimple} and Theorem \ref{thm:Pilgen}. For part $(b)$, assume $G$ is nonabelian. As in the proof of Theorem \ref{thm:pillay}, we may use the Peter-Weyl Theorem to reduce to the case that $K$ is a compact Lie group. Let $\epsilon>0$ be arbitrary. Following the proof of Theorem \ref{thm:pillay}, we  find a nonabelian simple amenable torsion group $H$ and an $\epsilon$-approximate homomorphism $f\colon H\to K$ whose image is an $\epsilon$-net in $K$.  By Theorem \ref{thm:Kaz}, we can replace $f$ with a genuine homomorphism $\tau'\colon H\to K$ whose image is a $3\epsilon$-net in $K$. Since $H$ is nonabelian and simple, and $K$ is abelian, it follows that $\tau'$ is trivial. So the identity is a $3\epsilon$-net in $K$. Since $\epsilon$ was arbitrary, we conclude that $K$ is trivial. 
\end{proof}

As shown previously by Palac\'{i}n \cite[Theorem 4.8]{PalPFS}, if one restricts to  finite groups (rather than amenable torsion groups) then  Corollary \ref{cor:fsg-def}$(b)$ can be deduced from results of Gowers \cite{GowQRG} on quasirandom groups (which will be discussed in Section \ref{sec:unitary}).  In fact, it turns out that in the pseudofinite case the previous corollary also holds for \emph{absolute} compactifications. However in this case the proofs require more high-powered tools such as the classification of finite simple groups. We will elaborate on this in Section \ref{sec:simple}.  
 
 \begin{example}\label{ex:cyclic}
 For completeness, we describe a concrete example showing that the nonabelian assumption in part $(b)$ of Corollary \ref{cor:fsg-def} is necessary. Given a prime number $p$, define $\tau_p\colon \Z/p\Z\to S^1$ so that $\tau_p(x)=e^{2\pi i x/p}$. Let $\cU$ be a nonprincipal ultrafilter on the set of prime numbers and let $G=\prod_{\cU}\Z/p\Z$ (as a classical structure in the language of groups). Let $\tau=\lim_{\cU}\tau_p\colon G\to S^1$. Then $\tau$ is a surjective homomorphism. Given a prime $p$, an $\epsilon>0$, and some $z\in S^1$, set $A_{p,\epsilon}(z)=\{g\in \Z/p\Z:d(\tau_p(g),z)<\epsilon\}$. Then $\tau$ is definable in the expansion of $G$ by predicates for the  internal sets $A_\epsilon(z)=\prod_{\cU}A_{p,\epsilon}(z)$ for all $\epsilon>0$ and $z\in S^1$.
 \end{example}

\section{Ultraproducts of finite simple groups}\label{sec:simple}

The goal of this section is to prove a result about absolute compactifications of  pseudofinite groups elementarily equivalent to an ultraproduct of finite simple groups.  Our primary motivation is to supply a correct proof of Theorem \ref{thm:fsg-gen}$(b)$ below, which appeared previously in work of the third author \cite[Theorem 3.1]{PiRCP}. However the proof there used a result of Stolz and Thom \cite{StoTh} on the lattice of normal subgroups of an ultraproduct of finite simple groups, which was later found to be incorrect (see \cite{SchTh-lattice}). 

In this section, we  follow \cite{MarZelFSG} and use the notation $G^n$ for the subgroup of a group $G$ generated by all powers $a^n$ for $a\in G$. 
We will make use of several known facts about groups, which we now state. The first is really a fusion of two results, so we provide a short explanation. 

\begin{theorem}[Martinez, Zelmanov  \cite{MarZelFSG}; Saxl, Wilson \cite{SaxWilFSG}; Babai, Goodman,  Pyber \cite{BaGoPy}]\label{thm:fsg1}
For any $n\geq 1$ there is some $k\geq 1$ such that if $G$ is a finite simple group of size at least $k$, then every element of $G$ is a product of $k$ elements in $G^n$ (i.e.,  $G=G^n\,\cdot\stackrel{k}{\ldots}\cdot\, G^n$).
\end{theorem}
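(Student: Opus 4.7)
The plan is to combine two ingredients. First, the algebraic observation that the subgroup $G^n$ (generated by $n$-th powers) is normal in $G$, since conjugation commutes with $n$-th powering; so by simplicity either $G^n=G$ or $G^n=\{1\}$. The latter would mean $G$ has exponent dividing $n$, and this is impossible once $|G|$ exceeds a bound depending only on $n$ (by CFSG, or even by elementary lower bounds on the exponent in alternating and Lie-type groups). So the real content of the theorem lies not in whether the subgroup $G^n$ equals $G$, but in bounding the \emph{width} of the set of $n$-th powers, i.e.\ the minimum number of factors $a_1^n a_2^n \cdots a_k^n$ needed to represent every group element.

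For that width bound, I would invoke the CFSG-based theorems of Martinez-Zelmanov \cite{MarZelFSG} and Saxl-Wilson \cite{SaxWilFSG}, which between them show that for each series of finite simple groups (alternating, classical, exceptional, sporadic) and each $n$, every element is a product of a bounded number of $n$-th powers, with the bound depending on $n$ and on the series but not on the order of $G$ within the series. To collapse the dependence on the series — in particular, to obtain uniformity as the Lie rank of classical groups or the degree of alternating groups grows unboundedly — I would appeal to Babai-Goodman-Pyber \cite{BaGoPy}, whose work on word widths in finite simple groups provides exactly this rank-independent bound. Putting these together yields a single $k=k(n)$ valid for all finite simple groups of order exceeding some threshold $h(n)$; after enlarging $k$ if necessary, we may absorb $h(n)$ into $k$ to match the precise form of the theorem statement.

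The main obstacle is really one of pedigree rather than of fresh ideas: each of the three cited results is itself a substantial consequence of CFSG together with detailed analysis of generating sets, conjugacy classes, and word maps in groups of Lie type, so what I am calling a proof is essentially a bookkeeping argument that aggregates their outputs into the precise form of Theorem \ref{thm:fsg1}. If one were content with a softer, non-effective form of the statement, a compactness reformulation is available — negating the conclusion for a fixed $n$ produces, via an ultraproduct of counterexamples, a pseudofinite group elementarily equivalent to an ultraproduct of arbitrarily large finite simple groups in which the width of $n$-th powers is infinite — but this still needs the per-series width bound as its input, so the heavy lifting is not avoided.
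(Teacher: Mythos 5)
Your logical skeleton is the same as the paper's: prove a bounded-width statement for $n$-th powers in finite simple groups, handle the degenerate case of simple groups of exponent $n$ by a size bound depending only on $n$, and absorb that bound into $k$. But the roles you assign to the three cited papers are scrambled, and one step, as literally written, rests on a result that does not exist. Martinez--Zelmanov \cite{MarZelFSG} and Saxl--Wilson \cite{SaxWilFSG} do not give merely per-series bounds that need to be ``collapsed'': each proves, uniformly over \emph{all} finite simple groups, that for every $n$ there is $k=k(n)$ with $G=G^n\,\cdot\stackrel{k}{\ldots}\cdot\, G^n$ unless $G$ has exponent $n$, so there is no series- or rank-dependence left to remove. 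Conversely, Babai--Goodman--Pyber \cite{BaGoPy} contains no theorem about widths of power words; that paper concerns minimal degrees of faithful transitive permutation representations, and its role here (as in the paper's explanation) is solely to yield that for each $n$ there are only finitely many finite simple groups of exponent $n$, i.e.\ precisely the ``exponent case'' that you instead dispose of with an appeal to ``CFSG or elementary exponent bounds.'' So your step ``appeal to Babai--Goodman--Pyber for the rank-independent width bound'' would fail as stated; your argument is rescued only because the Martinez--Zelmanov and Saxl--Wilson theorems are stronger than you credit them with being.

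Since Theorem \ref{thm:fsg1} is purely an aggregation of cited results, getting the aggregation right is the entire content, so the misattribution is the substantive defect. The correct bookkeeping, which is the paper's one-paragraph explanation, is: $(1)$ by \cite{MarZelFSG} and \cite{SaxWilFSG}, for each $n$ there is $k$ such that every finite simple group $G$ satisfies $G=G^n\,\cdot\stackrel{k}{\ldots}\cdot\, G^n$ or has exponent $n$; $(2)$ by \cite{BaGoPy}, there are only finitely many finite simple groups of exponent $n$; then enlarge $k$ beyond the orders of those finitely many exceptions. Your opening observation that $G^n$ is normal, so that by simplicity $G^n=G$ or $G^n=\{1\}$, is correct but becomes superfluous once the width statement is quoted in its actual uniform form.
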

\begin{proof}[Explanation]
It was shown in \cite{MarZelFSG} and \cite{SaxWilFSG} that for any $n\geq 1$ there is some $k\geq 1$ such that if $G$ is a finite simple group then either  $G=G^n\,\cdot\stackrel{k}{\ldots}\cdot\, G^n$ or $G$ has exponent $n$. It was shown in \cite{BaGoPy} that for any $n\geq 1$ there are only finitely many finite simple groups of exponent $n$. 
\end{proof}

Next, we quote  \cite[Proposition 2.4]{WilSPfG}.

\begin{theorem}[Wilson \cite{WilSPfG}]\label{thm:fsg2}
There is an integer $m\geq 1$ such that if $G$ is a nonabelian finite simple group then any element of $G$ is a product of at most $m$ commutators. 
\end{theorem}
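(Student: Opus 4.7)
The plan is to prove the bound through the classification of finite simple groups (CFSG). By CFSG, every nonabelian finite simple group belongs to one of three families: the alternating groups $A_n$ with $n\geq 5$, the finite simple groups of Lie type, or the $26$ sporadic groups. It suffices to bound the commutator width uniformly within each family and then take the maximum over the three resulting bounds.

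Two of these families pose no difficulty. The sporadic groups form a finite collection, so their commutator widths are trivially dominated by a single constant. For the alternating groups $A_n$ with $n\geq 5$, a classical theorem of Ore shows that every element is already a single commutator, so width $1$ suffices uniformly across this family.

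The main obstacle is the Lie-type case, where the bound must hold uniformly in both the rank of the root system and the size of the underlying finite field. The natural tool is character-theoretic: Frobenius's formula states that the number of ways to write a fixed $g\in G$ as $[x_1,y_1]\cdots[x_k,y_k]$ equals
\[
|G|^{2k-1}\sum_{\chi\in\textnormal{Irr}(G)}\frac{\chi(g)}{\chi(1)^{2k-1}},
\]
which is positive whenever $\sum_{\chi\neq 1}|\chi(g)|/\chi(1)^{2k-1}<1$. Bounds on the character ratios $|\chi(g)|/\chi(1)$ for finite simple groups of Lie type, of the kind developed by Liebeck, Shalev, and their collaborators, combined with lower bounds on the minimal nontrivial character degrees, guarantee that this inequality holds uniformly for some $k$ depending only on the family. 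Picking $m$ to dominate this $k$ together with the widths from the alternating and sporadic cases then yields the desired absolute bound.

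The hardest step is thus the uniform control of character ratios across the entire Lie-type family, especially for classical groups of unbounded rank. This is where the deep representation-theoretic input resides (Deligne--Lusztig theory, minimal-degree estimates, and related machinery), and it is the main technical content underpinning Wilson's proof.
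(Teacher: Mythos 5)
The paper does not actually prove this statement: it is imported verbatim as \cite[Proposition 2.4]{WilSPfG}, with only the remark that the stronger statement $m=1$ (Ore's conjecture) was later established by Liebeck, O'Brien, Shalev, and Tiep \cite{LOST}. So there is no in-paper argument to match your proposal against. Judged on its own terms, your outline is a legitimate and by now standard route: CFSG reduces to the three families, Ore's theorem handles the alternating groups with width $1$, the sporadic groups contribute only a finite maximum, and for groups of Lie type the Frobenius formula you quote is correct, so positivity of the class-algebra count follows once $\sum_{\chi\neq 1}|\chi(g)|/\chi(1)^{2k-1}<1$. This is essentially the character-theoretic strategy underlying the later work of Liebeck--Shalev and \cite{LOST}, rather than (necessarily) Wilson's original 1995 argument; for the purposes of the paper one could equally well just cite \cite{LOST} and take $m=1$.

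That said, as a proof your text is a blueprint whose entire weight rests on the deferred estimates, and two points deserve to be made explicit. First, for classical groups of unbounded rank, pointwise character-ratio bounds are not by themselves enough, since the number of irreducible characters grows with the rank; what one really needs is control of the zeta-type sums $\sum_{\chi\neq 1}\chi(1)^{-s}$ (using $|\chi(g)|\leq\chi(1)$), i.e.\ simultaneous information on minimal degrees and on how many characters there are of each degree. This is exactly the Liebeck--Shalev machinery you allude to, but the reduction from ``character ratios plus minimal degrees'' to the needed inequality should be stated. Second, such asymptotic estimates only give the inequality for all sufficiently large $|G|$ within each family; you should note that only finitely many finite simple groups are excluded by any such threshold, each has some finite commutator width, and these are absorbed into the final constant $m$. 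With those two points filled in, the argument is sound, though every hard ingredient is cited rather than proved --- which is also true of the paper itself, so the mismatch is only one of presentation.
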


The previous result is based on \emph{Ore's Conjecture}, which asserts that one can take $m=1$. This was later proved by Liebeck, O'Brien, Shalev, and Tiep \cite{LOST}. 

Finally, we state the generalization of Theorem \ref{thm:pillay} for absolute compactifications.

\begin{theorem}[Nikolov, Schneider, Thom \cite{NST}]\label{thm:NST}
Suppose $G$ is elementarily equivalent (in the group language) to an ultraproduct of finite groups, and $\tau\colon G\to K$ is a compactification of $G$. Then the connected component of $K$ is abelian.
\end{theorem}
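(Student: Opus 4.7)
The plan is to run the same Peter--Weyl reduction used in Theorem \ref{thm:pillay}, so it suffices to treat the case where $K$ is a compact Lie group equipped with a bi-invariant compatible metric $d$, and to show $K^0$ is abelian. The structural obstacle, compared to the definable case, is that an arbitrary group homomorphism $\tau\colon G\to K$ need not pull balls in $K$ back to definable subsets of $G$; hence one cannot directly extract an $\epsilon$-approximate homomorphism from a finite group via pseudofiniteness together with the covering and multiplication conditions $(i)$, $(ii)$ of the proof of Theorem \ref{thm:pillay}. The whole proof strategy has to detour around this missing ingredient.

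The replacement tool is the Nikolov--Segal machinery on verbal and commutator structure in finite groups, specifically \cite[Theorem 1.2]{NiSe}, which tells us that in a $d$-generated finite group $H$ every element of the (topological) closure of a verbal subgroup $w(H)$ is already a product of boundedly many $w$-values, with the bound depending only on $d$ and $w$. First I would choose a finite $S\subseteq G$ so that $\tau(S)$ generates a dense subgroup of $K$, and work inside the $|S|$-generated subgroup $\Gamma=\langle S\rangle\leq G$. Since $G$ is elementarily equivalent to an ultraproduct of finite groups, the image of $\Gamma$ in any finite approximation is a $|S|$-generated finite group, so the uniform Nikolov--Segal bounds apply there, and by {\L}o{\'s}'s theorem they transfer to the statement that every element of $w(\Gamma)$ is a product of uniformly boundedly many $w$-values in $\Gamma$.

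Next I would push this bounded-width information across $\tau$: for each word $w$, the image $\tau(w(\Gamma))$ is the set of $w$-values of elements of $\tau(\Gamma)$, and by continuity of the word map together with the uniform bound and density of $\tau(\Gamma)$ in $K$, the closure of the corresponding verbal subgroup in $K$ is in fact exhausted by boundedly many $w$-values coming from all of $K$. Applied to the commutator word, this says: if $K^0$ is nonabelian, then the connected compact Lie group $[K^0,K^0]$ has every element equal to a product of a bounded number of genuine commutators of elements of $K$; together with the Peter--Weyl structure theory of compact connected Lie groups (and passage to a nonabelian simple quotient modulo center) this is where one tries to manufacture a contradiction.

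The main obstacle I expect is precisely that last conversion from algebraic bounded-width data on the pseudofinite side to a genuine topological impossibility in $K$: Gotô's theorem shows bounded commutator width alone is consistent with $K^0$ being a nonabelian semisimple compact Lie group, so the contradiction has to be squeezed out of the more delicate parts of \cite{NiSe}, specifically the statements about \emph{closure} of verbal subgroups in finitely generated profinite groups and the associated control on finite-index subgroups. This is the step that genuinely requires the classification of finite simple groups, and it is the reason the argument is substantially more involved than the definable proof given earlier in this section.
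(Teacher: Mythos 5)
First, note that the paper does not actually prove this statement: it is quoted from \cite{NST} precisely because its proof (via \cite[Theorem 1.2]{NiSe}, hence the classification of finite simple groups) is outside the scope of the paper, so your proposal has to be judged as a free-standing reconstruction of the Nikolov--Schneider--Thom argument. As such it has a genuine gap. After the Peter--Weyl reduction and the transfer of Nikolov--Segal bounds, what your argument actually delivers is only that the closed derived subgroup of (the relevant closed subgroup of) $K$ has bounded commutator width --- and, as you yourself observe via Got\^o's theorem, this is no contradiction at all: every compact connected semisimple Lie group has commutator width one. Your final paragraph then defers the contradiction to unspecified ``more delicate parts'' of \cite{NiSe}, so the decisive step of the proof is announced rather than carried out; the write-up is a plan with the key idea missing.

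Concretely, the missing idea is that \cite[Theorem 1.2]{NiSe} is used in its precise form: in a finite $d$-generated group, every element of the derived subgroup is a product of $f(d)$ commutators $[x,g^{\pm 1}]$ with $g$ ranging over the \emph{chosen generators}, not over the whole group. One applies this (after transfer to $G$) to tuples $g_1,\ldots,g_d\in G$ whose images $\tau(g_i)$ lie in an arbitrarily small identity neighborhood of $K$. By uniform continuity of the commutator map, a product of $f(d)$ commutators with entries $\tau(g_i^{\pm1})$ near $1$ stays near $1$; hence the closed derived subgroup of the closure of $\langle\tau(g_1),\ldots,\tau(g_d)\rangle$ is a closed subgroup contained in a small identity neighborhood, and is therefore trivial because Lie groups have no small subgroups. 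Since $\tau(G)$ is dense and $K^0$ is generated by any identity neighborhood, this forces $K^0$ to be abelian. Your route through bounded width of \emph{arbitrary} commutators discards exactly the generator-dependence that makes this argument work, so it cannot be repaired without reinstating that ingredient. (A smaller, fixable issue: $w(\Gamma)$ for an abstract finitely generated subgroup $\Gamma\leq G$ is not definable, so the {\L}o{\'s}/elementary-equivalence transfer must be phrased level by level, for products of $m$ $w$-values for each $m$, rather than for the verbal subgroup as a single object.)
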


It is worth pointing out that all three of the above theorems rely on the classification of finite simple groups (for Theorem \ref{thm:NST} the dependence goes through a result of Nikolov and Segal \cite[Theorem 1.2]{NiSe}).  Using these tools, we can prove a direct analogue of Corollary \ref{cor:fsg-def} for absolute compactifications of ultraproducts of finite simple groups.

\begin{theorem}\label{thm:fsg-gen}
Suppose $G$ is an infinite group elementarily equivalent (in the group language) to an  ultraproduct of finite simple groups, and $\tau\colon G\to K$ is a compactification of $G$. Then:
\begin{enumerate}[$(a)$]
\item $K$ is connected, and thus abelian by Theorem \ref{thm:NST}.
\item If $G$ is nonabelian then $K$ is trivial.
\end{enumerate}
\end{theorem}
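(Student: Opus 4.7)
The plan is to handle both parts by contradiction, in each case using one of Theorems \ref{thm:fsg1} and \ref{thm:fsg2} to produce a first-order sentence that transfers to $G$ via elementary equivalence. Write $G\equiv\prod_{\cU}G_i$ for some ultrafilter $\cU$ and finite simple groups $G_i$; since $G$ is infinite, the ultraproduct is infinite, so for every $N$ the set $\{i:|G_i|\geq N\}$ lies in $\cU$. Hence any first-order sentence that holds in all sufficiently large finite simple groups holds in the ultraproduct, and therefore in $G$.

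For part $(a)$, I would first suppose for contradiction that $K$ is not connected. Then $K/K^0$ is a nontrivial profinite group, so there is a proper clopen normal subgroup $C\seq K$ of finite index $n\geq 2$. Density of $\tau(G)$ in $K$ together with $C$ being clopen makes the composition $\pi\colon G\to K\to K/C$ a surjection onto a finite group of order $n$, so by Lagrange's theorem $\pi$ kills every $n$-th power. Theorem \ref{thm:fsg1} supplies $k=k(n)$ such that every element of every sufficiently large finite simple group is a product of $k$ $n$-th powers, a first-order sentence which thus holds in $G$. Applying $\pi$ then forces $\pi\equiv 1$, contradicting $n\geq 2$. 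Combined with Theorem \ref{thm:NST}, this yields $(a)$.

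For part $(b)$, assume $G$ is nonabelian, so by $(a)$ we know $K$ is abelian. Nonabelianity is first-order, so $\{i:G_i\text{ is nonabelian}\}\in\cU$, meaning we may assume almost every $G_i$ is a nonabelian finite simple group. Theorem \ref{thm:fsg2} then supplies a uniform $m$ such that every element of any such $G_i$ is a product of $m$ commutators, a first-order sentence that transfers to $G$. Since $K$ is abelian, $\tau$ kills every commutator, hence every element of $G$, so $\tau\equiv 1$ and density of $\tau(G)$ in $K$ forces $K=\{1\}$.

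The main obstacle --- really a matter of interpretation rather than a deep point --- is ensuring that Theorem \ref{thm:fsg1} yields a genuine first-order sentence. Read literally as an equation of subsets, and remembering that $G^n$ is a subgroup, the expression $G^n\cdot\stackrel{k}{\ldots}\cdot G^n$ collapses to $G^n$ itself and the role of $k$ is lost. The intended reading is that each factor ranges over the definable set $\{a^n:a\in G\}$ of $n$-th powers, so the content becomes the first-order sentence $\forall x\,\exists y_1,\ldots,y_k\;x=y_1^n\cdots y_k^n$. Once this is granted, the remainder is a routine application of {\L}o\'s's theorem and the density of $\tau(G)$ in $K$.
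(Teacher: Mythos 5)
Your proposal is correct and takes essentially the same route as the paper: for $(a)$ the paper likewise reduces to showing $G$ has no proper finite-index (normal) subgroup and transfers Theorem \ref{thm:fsg1} as the first-order sentence that every element is a product of $k$ many $n$-th powers, and for $(b)$ it likewise transfers Theorem \ref{thm:fsg2} so that $\ker\tau\supseteq G'=G$ and density forces $K$ to be trivial. Your closing remark about reading $G^n\,\cdot\ldots\cdot\,G^n$ as a product of $n$-th powers (rather than of elements of the generated subgroup) is exactly the reading the paper's transfer step relies on, so it is a fair clarification rather than a gap.
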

\begin{proof}
Part $(a)$. If $K$ is not connected, then we can find a proper finite-index clopen subgroup $C\leq K$, which yields a proper  finite-index subgroup $\tau\inv(C)$ of $G$. So it suffices to show that $G$ has no proper finite-index subgroups. 

Let $H\leq G$ be a finite-index subgroup of $G$. We want to show that $G=H$. Without loss of generality, we may assume $H$ is normal. Let $n$ be the index of $H$ in $G$, and let $k$ be as in Theorem \ref{thm:fsg1}. Since $G$ is infinite, it is elementarily equivalent to an ultraproduct of finite simple groups of size at least $k$. So Theorem \ref{thm:fsg1} implies $G^n\,\cdot\stackrel{k}{\ldots}\cdot\, G^n=G$. But $G^n\seq H$ since $G/H$ is a finite group of size $n$. So $G=H$.  

Part $(b)$. Since $K$ is abelian, $\ker\tau$ contains the derived subgroup of $G'$ of $G$, which is all of $G$ by Theorem \ref{thm:fsg2}.  
\end{proof}

As previously discussed, part $(b)$ provides a correct proof of a claim originally from \cite{PiRCP}. So it is slightly amusing that the proof relies on Theorem \ref{thm:NST}, which was conjectured in \cite{PiRCP}.

\section{Maps to compact sorts}

We start this section with some motivation. Let $G$ be a pseudofinite first-order structure expanding a group, and suppose $\tau\colon G\to K$ is a definable compactification, with $K$ a compact Lie group. Then we know from Theorem \ref{thm:pillay} that the connected component of $K$ is a compact connected \emph{abelian} Lie group, and thus isomorphic to a finite-dimensional torus $\T(n)=(\R/\Z)^n$. Therefore if $U\seq \T(n)$ is some open neighborhood of the identity, then the preimage $\tau\inv(U)$ is essentially a Bohr neighborhood in $G$ in the  sense of additive combinatorics \cite{TaoVu}.  Now, although $\tau\inv(U)$ can be approximated by definable sets, it is not itself definable. So there is some work required to transfer statements about $\tau\inv(U)$ to statements about Bohr neighborhoods in finite groups. In \cite{CPT}, this was done by first approximating $\tau\inv(U)$ by definable ``approximate Bohr neighborhoods", and then using Theorem \ref{thm:Kaz} to recover genuine Bohr neighborhoods. While effective, this process was somewhat cumbersome and inflexible. 

In this section we take a different approach, which uses Theorem \ref{thm:Kaz} at the onset in order to realize the triple $(G,K,\tau)$ as an ultraproduct of finite groups equipped with a homomorphism to $K$. This will ultimately be formalized  in the setting of continuous logic. So for clarity, we will distinguish between ``classical" and ``continuous" first-order logic throughout this section.

\begin{definition}
Let $\cL$ be a first-order language in either classical or continuous logic, and let $T$ be an arbitrary $\cL$-theory.  Given a a tuple $x$ of variables, we let $S^{\cL}_x(T)$ denote the space of complete $\cL$-types in variables $x$ consistent with $T$ (i.e., the Stone space of the Boolean algebra of $\cL$-formulas in free variables $x$ modulo equivalence in $T$). 
\end{definition}

We emphasize that in the previous definition, $T$ is \emph{not} assumed to be complete. So,
 for example, $S^{\cL}_x(\emptyset)$ is the space of all complete $\cL$-types realized in some $\cL$-structure.

\subsection{Definable maps on ultraproducts in classical logic}\label{sec:maps1}

Given a topological space $X$, recall that a subspace $A\seq X$ is a \emph{retract of $X$} if there is a retraction from $X$ to $A$, i.e., a continuous function from $X$ to $A$ whose restriction to $A$ is the identity. Note that if $A$ is a retract of $X$, then any continuous function from $A$ to some other space $C$ can be extended to a continuous function from $X$ to $C$ by composing with a retraction from $X$ to $A$. We will use the following topological result (see \cite[\S 26.II Corollary 2]{Kuratowski}). 

\begin{fact}\label{fact:Kura}
Suppose $X$ is a separable metrizable space with a basis of clopen sets. Then any closed subset of $X$ is  a retract of $X$. 
\end{fact}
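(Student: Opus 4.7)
The plan is to construct a retraction $r\colon X\to A$ which acts as the identity on $A$ and partitions $X\setminus A$ into disjoint clopen pieces, each sent to a nearby point of $A$. Fix a compatible metric $d$ on $X$. For each $x\in X\setminus A$, use the clopen basis to choose a clopen neighborhood $V_x$ of $x$ contained in the open ball $B(x,d(x,A)/3)$. Then $\mathrm{diam}(V_x)\leq \tfrac{2}{3}d(x,A)$ while $d(V_x,A)\geq \tfrac{2}{3}d(x,A)$, so in particular $\mathrm{diam}(V_x)\leq d(V_x,A)$.

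Next, since $X$ is separable metrizable, hence Lindel\"of, the open cover $\{V_x:x\in X\setminus A\}$ admits a countable subcover $\{V_n:n<\omega\}$. Disjointify it by setting $W_n=V_n\setminus\bigcup_{m<n}V_m$; each $W_n$ is clopen in $X$ (being a Boolean combination of clopen sets), the family $\{W_n\}$ partitions $X\setminus A$, and each $W_n$ inherits the bound $\mathrm{diam}(W_n)\leq d(W_n,A)$ since this inequality passes to arbitrary subsets. For each nonempty $W_n$, choose $a_n\in A$ with $d(W_n,a_n)\leq 2\,d(W_n,A)$, and define $r\colon X\to A$ by $r(a)=a$ for $a\in A$ and $r(x)=a_n$ whenever $x\in W_n$.

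To verify continuity, note that $r$ is locally constant on $X\setminus A$ because each $W_n$ is open, so only continuity at points of $A$ is at issue. Given $a\in A$ and $x_k\to a$ with $x_k\in X\setminus A$, pick $n_k$ so that $x_k\in W_{n_k}$; then $d(W_{n_k},A)\leq d(x_k,a)\to 0$, and combining with the diameter bound yields $d(a_{n_k},x_k)\leq d(W_{n_k},a_{n_k})+\mathrm{diam}(W_{n_k})\leq 3\,d(W_{n_k},A)\to 0$, so $r(x_k)=a_{n_k}\to a$.

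The main obstacle is really the partition step: one needs to convert a countable \emph{cover} by clopen sets with diameter controlled by distance to $A$ into a \emph{disjoint} clopen family retaining the same control. What makes this painless is that the inequality $\mathrm{diam}(V)\leq d(V,A)$ is hereditary under taking subsets, so the disjointification $V_n\setminus\bigcup_{m<n}V_m$ preserves it automatically, while finite Boolean combinations of clopen sets remain clopen in $X$ itself (and not merely in the subspace $X\setminus A$), which is what lets us conclude local constancy off of $A$.
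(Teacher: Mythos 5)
Your proof is correct, and it is essentially the standard argument behind the result: the paper itself gives no proof of Fact \ref{fact:Kura}, citing Kuratowski instead, and your construction (cover $X\setminus A$ by clopen sets whose diameter is dominated by their distance to $A$, extract a countable subcover by hereditary Lindel\"ofness, disjointify---clopenness is exactly what keeps the disjointified pieces open---and collapse each piece to a nearby point of $A$) is the classical way to prove it. All the estimates check out: $\mathrm{diam}(V_x)\le \tfrac23 d(x,A)\le d(V_x,A)$, the bound is hereditary so it survives disjointification, and the $3\,d(W_{n_k},A)$ estimate gives continuity at points of $A$ (sequential continuity suffices since $X$ is metrizable). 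The only implicit hypothesis worth flagging is that $A$ must be nonempty (needed to choose the points $a_n$, and of course the statement is false for $A=\emptyset\ne X$); this is how the fact is used in Lemma \ref{lem:Kura}, where the relevant type space is nonempty.
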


 We apply the previous fact as follows.

\begin{lemma}\label{lem:Kura}
Let $\cL$ be a countable language in classical logic, and suppose $T_0\seq T$ are $\cL$-theories. Then $S^{\cL}_x(T)$ is a retract of $S^{\cL}_x(T_0)$. Thus any continuous function from $S^{\cL}_x(T)$ to a topological space $C$ can be extended to a continuous function from $S^{\cL}_x(T_0)$ to $C$. 
\end{lemma}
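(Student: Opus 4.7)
The plan is to realize $S^{\cL}_x(T)$ as a closed subspace of $S^{\cL}_x(T_0)$ and then invoke Fact \ref{fact:Kura}. First I would check the inclusion at the level of Stone spaces: since $T_0 \subseteq T$, any complete $\cL$-type consistent with $T$ is automatically consistent with $T_0$, so there is a natural map $S^{\cL}_x(T) \to S^{\cL}_x(T_0)$. This map is an embedding (a type is determined by the formulas it contains, independently of the background theory), and its image consists of those $p \in S^{\cL}_x(T_0)$ such that every sentence in $T$ is implied by $p$ modulo $T_0$; equivalently, this image is the intersection $\bigcap_{\varphi \in T}[\varphi]$ of basic closed sets, hence closed.

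Next I would verify the two topological hypotheses of Fact \ref{fact:Kura} for the ambient space $S^{\cL}_x(T_0)$. As a Stone space it is compact Hausdorff with a basis of clopen sets $[\varphi]$. Countability of $\cL$, together with the fact that each formula uses only finitely many variables from $x$, ensures that (for $x$ at most countable) there are only countably many $\cL$-formulas in the free variables $x$ up to $T_0$-equivalence, so this clopen basis is countable. A compact Hausdorff space with a countable basis is separable and metrizable by Urysohn's theorem. Applying Fact \ref{fact:Kura} to the closed subspace $S^{\cL}_x(T) \subseteq S^{\cL}_x(T_0)$ then produces the desired retraction $r \colon S^{\cL}_x(T_0) \to S^{\cL}_x(T)$.

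The extension statement is immediate from the retraction: given a continuous map $f \colon S^{\cL}_x(T) \to C$ into an arbitrary space $C$, the composition $f \circ r$ is continuous on $S^{\cL}_x(T_0)$ and agrees with $f$ on $S^{\cL}_x(T)$ since $r$ restricts to the identity there.

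There is no real obstacle here; the only point that deserves care is making sure the second-countability hypothesis of Fact \ref{fact:Kura} is genuinely available, which is why countability of $\cL$ (and implicitly of the variable tuple $x$) is needed — without it, the Stone space need not be metrizable, and the Kuratowski retraction theorem would not directly apply.
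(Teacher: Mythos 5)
Your argument is correct and is essentially the paper's own proof, just spelled out in more detail: the paper likewise verifies that $S^{\cL}_x(T_0)$ satisfies the hypotheses of Fact \ref{fact:Kura} (using countability of $\cL$) and observes that $S^{\cL}_x(T)$ sits inside it as a closed subset, then applies the retraction. Your added care about second countability of the clopen basis and the (implicit) countability of the tuple $x$ matches the intended reading of the lemma.
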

\begin{proof}
 Since $\cL$ is countable, $S^{\cL}_x(T_0)$ satisfies the assumptions of Fact \ref{fact:Kura}. Clearly $S^{\cL}_x(T)$ is a closed subset of $S^{\cL}_x(T_0)$.
\end{proof}

\begin{proposition}\label{prop:DLmaps}
Let $\cL$ be a countable language in classical logic, and let $M=\prod_{\cU}M_i$ be an ultraproduct of a family $\{M_i:i\in I\}$ of $\cL$-structures. Suppose $C$ is a compact Hausdorff space and $f\colon M^x\to C$ is  $\emptyset$-definable. Then for each $i\in I$, there is an $\emptyset$-definable function $f_i\colon M^x_i\to C$ such that $f=\lim_{\cU}f_i$.
\end{proposition}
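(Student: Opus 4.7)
The plan is to realize $f$ as coming from a continuous function on the Stone space $S^{\cL}_x(\Th(M))$, extend that function to $S^{\cL}_x(\emptyset)$ via Lemma \ref{lem:Kura}, and then define each $f_i$ by composing the extension with the natural type map $a\mapsto\tp^{M_i}(a/\emptyset)$ on $M_i^x$.

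In more detail: because $f$ is $\emptyset$-definable, the equivalent characterization of definability recalled before Theorem \ref{thm:pillay} (adapted to parameters from $\emptyset$) produces a continuous function $\tilde{f}\colon S^{\cL}_x(\Th(M))\to C$ with $\tilde{f}(\tp^M(a/\emptyset))=f(a)$ for every $a\in M^x$.  Since $\cL$ is countable, Lemma \ref{lem:Kura} applied with $T_0=\emptyset$ and $T=\Th(M)$ lets me extend $\tilde{f}$ to a continuous $g\colon S^{\cL}_x(\emptyset)\to C$.  For each $i\in I$ I then set $f_i(a)\coloneqq g(\tp^{M_i}(a/\emptyset))$.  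To see $f_i$ is $\emptyset$-definable, fix a closed $C'\seq C$ and open $U\seq C$ with $C'\seq U$; then $g\inv(C')$ and $g\inv(U)$ are respectively closed and open in the Stone space $S^{\cL}_x(\emptyset)$, and the standard fact that disjoint closed subsets of a Stone space are separated by a clopen produces a clopen $[\varphi]$, corresponding to an $\cL$-formula $\varphi(x)$, with $g\inv(C')\seq[\varphi]\seq g\inv(U)$.  Pulling back along $a\mapsto\tp^{M_i}(a/\emptyset)$ gives the required $\emptyset$-definable sandwich $f_i\inv(C')\seq\varphi(M_i^x)\seq f_i\inv(U)$.

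For the ultralimit identity, fix $a=[(a_i)]_{\cU}\in M^x$.  {\L}o{\'s}'s Theorem says that for any $\cL$-formula $\varphi(x)$, $M\models\varphi(a)$ iff $\{i:M_i\models\varphi(a_i)\}\in\cU$, which is exactly the statement that $\tp^{M_i}(a_i/\emptyset)\to\tp^M(a/\emptyset)$ along $\cU$ in $S^{\cL}_x(\emptyset)$.  Continuity of $g$ then yields $f_i(a_i)=g(\tp^{M_i}(a_i/\emptyset))\to g(\tp^M(a/\emptyset))=\tilde{f}(\tp^M(a/\emptyset))=f(a)$ along $\cU$, i.e.\ $f(a)=\lim_{\cU}f_i(a)$.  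The only substantive ingredient is the extension step from Lemma \ref{lem:Kura} (which requires countability of $\cL$); the rest is Stone duality together with {\L}o{\'s}.
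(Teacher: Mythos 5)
Your proposal is correct and follows essentially the same route as the paper: view $f$ as a continuous function on $S^{\cL}_x(\Th(M))$, extend it to $S^{\cL}_x(\emptyset)$ via Lemma \ref{lem:Kura} with $T_0=\emptyset$, define each $f_i$ by composing with the type map on $M_i^x$, and verify $f=\lim_{\cU}f_i$ via {\L}o{\'s}. The paper leaves the definability of the $f_i$ and the ultralimit identity as a routine check, which you have correctly filled in.
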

\begin{proof}
We can view $f$ as a continuous function from $S^{\cL}_x(T)$ to $C$ where $T=\Th(M)$. By Lemma \ref{lem:Kura} (with $T_0=\emptyset$), $f$ extends to a continuous function $f'\colon S^{\cL}_x(\emptyset)\to C$. For each $i\in I$, define $f_i\colon M^x_i\to C$ so that for $a\in M^x$, $f_i(a)=f'(\tp_{\cL}(a))$. It is then straightforward to check that $f=\lim_{\cU}f_i$. 
\end{proof}

\begin{remark}\label{rem:DLmaps}$~$
\begin{enumerate}[$(1)$]
\item It is an elementary exercise to show that if $X$ is a Hausdorff space for which every closed subset is a retract, then $X$ has a basis of clopen sets (i.e., $X$ is \emph{zero-dimensional}). Thus Fact \ref{fact:Kura} provides a characterization of zero-dimensionality for separable metrizable spaces in terms of retracts.
\item A Hausdorff space is called \textit{ultraparacompact} if any open cover can be refined by a partition into clopen sets (examples include Stone spaces and zero-dimensional separable metrizable spaces). Given an ultraparacompact space $X$, Fact \ref{fact:Kura} can be adapted to say that any closed completely metrizable subset of  $X$ is a retract of $X$. This follows from a result of Ellis \cite{EllisRL}, which says that if $X$ is ultraparacompact and $A\seq X$ is closed, then any continuous function from $A$ to a completely metrizable space $Y$ extends to a continuous function from $X$ to $Y$. Note that this result directly yields Lemma \ref{lem:Kura} when $C$ is completely metrizable, and without assuming $\cL$ is countable. Consequently, we obtain a variation of Proposition \ref{prop:DLmaps} where $\cL$ is of arbitrary cardinality and $C$ is compact, Hausdorff, and (completely) metrizable. However, in this case, $C$ is second-countable and thus any  definable function from an $\cL$-structure to $C$ is $\emptyset$-definable with respect to a countable sublanguage of $\cL$ expanded by countably many constants. Therefore this variation of Proposition \ref{prop:DLmaps} already follows from the proof for countable languages.
\end{enumerate}
\end{remark}

\subsection{Definable maps on ultraproducts in continuous logic}\label{sec:maps2}

Next we wish to obtain a version of Proposition \ref{prop:DLmaps} for metric structures. We assume familiarity with the basics of continuous logic. See \cite{BBHU} for an introduction.
Since type spaces in continuous logic are no longer totally disconnected, we will need to take a different approach than the classical case (in light of Remark \ref{rem:DLmaps}$(1)$). 

Recall that a metrizable topological space $C$ is called an \emph{absolute neighborhood retract} if for any embedding of $C$ as a closed subspace of some metrizable space $Y$, there is an open set $U\seq Y$ containing $C$ such that $C$ is a retract of $U$. 

\begin{proposition}\label{prop:CLmaps}
Let $\cL$ be a language in continuous logic, and let $M=\prod_{\cU}M_i$ be an ultraproduct of a family $\{M_i:i\in I\}$ of $\cL$-structures. Suppose $C$ is a compact absolute neighborhood retract and $f\colon M^x\to C$ is $\emptyset$-definable. Then for each $i\in I$, there is an $\emptyset$-definable function $f_i\colon M^x_i\to C$ such that $f=\lim_{\cU}f_i$.
\end{proposition}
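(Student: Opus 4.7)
The plan is to adapt the proof of Proposition \ref{prop:DLmaps}, replacing the retraction lemma (Fact \ref{fact:Kura}) with the standard ANR extension theorem: if $C$ is a compact metric ANR, $X$ is a metric space, and $A\seq X$ is closed, then any continuous $g\colon A\to C$ extends to a continuous map $\tilde{g}\colon V\to C$ on some open $V\supseteq A$. Since this extension is only local, the classical strategy of pulling back from $S^{\cL}_x(\emptyset)$ requires an additional ultraproduct-theoretic step.

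First I would reduce to $\cL$ countable. Since $C$ is compact metrizable it is separable; fix a countable dense $\{c_n:n<\omega\}\seq C$, and note that each $\emptyset$-definable predicate $d_C(f(\cdot),c_n)$ is a uniform limit of $\cL$-formulas, which together involve only countably many symbols. Replacing $\cL$ by the sublanguage they generate, $X\coloneqq S^{\cL}_x(\emptyset)$ becomes compact metrizable, and $f$ induces a continuous $\hat{f}\colon A\to C$ on the closed subset $A\coloneqq S^{\cL}_x(T)$, where $T=\Th(M)$. The ANR extension theorem then yields an open $V\seq X$ with $A\seq V$ and a continuous $\tilde{f}\colon V\to C$ extending $\hat{f}$.

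The core new step is the following claim $(\ast)$: for every open $V\seq X$ with $A\seq V$, the set $J\coloneqq\{i\in I:S^{\cL}_x(\Th(M_i))\seq V\}$ lies in $\cU$. If not, then $I\setminus J\in\cU$; choose $p_i\in S^{\cL}_x(\Th(M_i))\setminus V$ for each $i\in I\setminus J$ and form the ultralimit $p=\lim_{\cU}p_i\in X$. Since $\Th(M)=\lim_{\cU}\Th(M_i)$, every condition of $T$ holds in $p$, so $p\in A$; but $X\setminus V$ is stable under ultralimits of types, giving $p\in X\setminus V$, contradicting $A\seq V$.

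Given $(\ast)$, fix $c_0\in C$ and define $f_i\colon M_i^x\to C$ by $f_i(a)=\tilde{f}(\tp(a))$ when $i\in J$ (so $\tp(a)\in S^{\cL}_x(\Th(M_i))\seq V$), and $f_i\equiv c_0$ when $i\notin J$. Each $f_i$ factors through a continuous map on its type space, so is $\emptyset$-definable. For $a=[(a_i)]_{\cU}\in M^x$, the types $\tp(a_i)$ converge to $\tp(a)\in A$ along $\cU$ in $X$; combined with $J\in\cU$ and continuity of $\tilde{f}$ at $\tp(a)\in V$, this yields $\lim_{\cU}f_i(a_i)=\tilde{f}(\tp(a))=\hat{f}(\tp(a))=f(a)$. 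The main obstacle is establishing $(\ast)$, which bridges the local ANR extension with the ultraproduct structure; once this is in place, the rest proceeds essentially as in the classical case.
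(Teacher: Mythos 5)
Your proof is correct, but it takes a genuinely different route from the paper's. The paper never metrizes the type space: it embeds $C$ as a closed subset of $[0,1]^\omega$, uses the Tietze Extension Theorem to extend $f$ to a map $f'\colon S^{\cL}_x(\emptyset)\to[0,1]^\omega$ on the full (possibly non-metrizable) compact Hausdorff type space, and only then invokes the ANR hypothesis through a retraction $r\colon U\to C$ with $U\seq[0,1]^\omega$ open; the fact that $\cU$-many of the naive pullbacks $f'_i$ land in $U$ is proved by covering $C$ with finitely many cylinder boxes, writing down an explicit definable predicate $\psi$ from the coordinate functions of $f'$, and applying {\L}o\'{s} to the condition $\sup_x\psi(x)=0$. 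You instead reduce to a countable sublanguage (legitimate, and the same observation as Remark \ref{rem:DLmaps}$(2)$; it also ensures the final $f_i$ remain $\emptyset$-definable in $\cL$), so that $S^{\cL}_x(\emptyset)$ is compact metrizable, apply the ANR neighborhood-extension property directly to $A=S^{\cL}_x(\Th(M))$, and replace the paper's predicate computation by the purely topological claim $(\ast)$: if $\Th(M_i)\to\Th(M)$ along $\cU$ then $S^{\cL}_x(\Th(M_i))\seq V$ for $\cU$-most $i$. Your proof of $(\ast)$ is sound: the ultralimit $p=\lim_{\cU}p_i$ exists in the compact space, lies in the closed set $X\setminus V$, and its restriction to sentences is $\lim_{\cU}\Th(M_i)=\Th(M)$ by {\L}o\'{s}, so $p\in A$, a contradiction; the final limit computation then works exactly as you say. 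What each approach buys: yours avoids the Hilbert cube and the explicit $\psi$, giving a cleaner semicontinuity argument at the level of type spaces, at the cost of the countability reduction; the paper's works for arbitrary $\cL$ in one pass and, more importantly, produces global data (the fixed embedding, the open set $U$, the retraction $r$, and the extension $f'$ defined on all of $S^{\cL}_x(\emptyset)$) that is reused verbatim in Remark \ref{rem:MUC} and Corollary \ref{cor:mainmaps} to define the language $\cL_f$, its modulus of uniform continuity, and $f$-coherent structures; your local extension $\tilde f$ on $V$ depends on $\Th(M)$ and would not directly furnish that uniform framework, though it fully suffices for the proposition as stated.
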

\begin{proof}
Since $C$ is compact and metrizable, we can embed it as a closed subset of $[0,1]^\omega$. %
View $f$ as a continuous function from $S^{\cL}_x(T)$ to $[0,1]^\omega$, where $T=\Th(M)$. By the Tietze Extension Theorem, we can extend $f$ to a continuous function $f'\colon S^{\cL}_x(\emptyset)\to [0,1]^\omega$. For $i\in I$, define $f'_i\colon M_i^x\to [0,1]^\omega$ so that for $a\in M_i^x$, $f'_i(a)=f'(\tp(a))$. Then $f=\lim_{\cU}f'_i$ (as in the proof of Proposition \ref{prop:DLmaps}).

At this point, we do not necessarily know that a given $f'_i$ maps to $C$. However, by assumption there is an open set $U\seq [0,1]^\omega$ containing $C$ and a retraction $r\colon U\to C$. Let $X\seq I$ be the set of $i\in I$ such that $f'_i$ maps to $U$.

\medskip

\noindent\textit{Claim.} $X$ is in $\cU$.

\noindent\textit{Proof.} We view an element $w\in [0,1]^\omega$ as a sequence $(w_k)_{k<\omega}$. Since $C$ is compact and contained in the open set  $U$, we can find finitely many points $w^1,\ldots,w^m\in C$, integers $n_1,\ldots,n_m<\omega$, and real numbers $\epsilon_1,\ldots,\epsilon_m,\delta>0$ such that:
\begin{enumerate}[$(1)$]
\item for any $w\in C$ there is  $t\leq m$ such that $|w_k-w^t_k|\leq\epsilon_t$ for all $k<n_t$, and
\item for any $w\in [0,1]^\omega$ and $t\leq m$, if $|w_k-w^t_k|<\epsilon_t+\delta$ for all $k<n_t$ then $w\in U$.
\end{enumerate}

For each $k<\omega$, define $f^k\colon S^{\cL}_x(\emptyset)\to [0,1]$ to be the $k^{\textnormal{th}}$ coordinate map of $f'$. Then each $f^k$ is a definable $\cL$-predicate in the sense of \cite[Section 9]{BBHU} (see Proposition 8.10 and Theorem 9.9 there). 
Consider the definable $\cL$-predicate
 \[
 \psi(x)\coloneqq \min_{t\leq m}\max_{k<n_t}(|f_k(x)-w^t_k|\dotminus \epsilon_t).
 \]
Since $f$ maps to $C$, we have $\sup_{a\in M^x}\psi(a)=0$ by $(1)$. Therefore, the set $Y\seq I$ of $i\in I$ such that $\sup_{a\in M_i^x}\psi(a)<\delta$ is in $\cU$. By $(2)$, we have $Y\seq X$.\clqed \medskip

For $i\not\in X$, replace $f'_i$ with an arbitrary $\emptyset$-definable map from $M^x_i$ to $U$ (e.g., a constant map). So now $f'_i$ maps to $U$ for all $i\in I$, and since $X\in\cU$  we still have $f=\lim_{\cU}f'_i$. For each $i\in I$, set $f_i=r\circ f'_i$. Then each $f_i$ is an $\emptyset$-definable map from $M^x_i$ to $C$. Moreover, for any $a\in M^x$ and representative $(a_i)_{i\in I}$, we have
\[
\textstyle f(a)=r(f(a))=r(\lim_{\cU}f'_i(a_i))=\lim_{\cU} r(f'_i(a_i))=\lim_{\cU}f_i(a_i),
\]
as desired.
\end{proof}

In personal communication with the first author, James Hanson has proposed an example showing that the conclusion of the previous result can fail if one does not assume $C$ is an absolute neighborhood retract.

\subsection{Formalizing the logic}

Let $\cL$ be a first-order language in either classical or continuous logic. We define $\cL_f$ to be a two-sorted\footnote{See page 9 of \cite{Pibook} and/or the end of Section 1.1 in \cite{TeZi} for  discussion of multi-sorted languages.} continuous language  in sorts $S_1$ and $S_2$ consisting of the following symbols:
\begin{enumerate}[\hspace{10pt}$\ast$]
\item the language $\cL$ relativized to the sort $S_1$ (if $\cL$ is a language in classical logic, we use  trivial moduli of uniform continuity for all symbols in $\cL$);
\item a function symbol $f$  from $S_1$ to $S_2$ of some arity and modulus of uniform continuity (which we  suppress in the notation).
\end{enumerate}
We will write $\cL_{f}$-structures as triples $(M,X,f)$ where $M$ is an $\cL$-structure and $X$ is the universe of the sort $S_2$. Note that in this case $X$ has no further structure other than its metric and the map $f$ from $M$. 

For the rest of this subsection, let $C$ be a  compact metric space with metric $d$.

\begin{definition}
An \textbf{$\cL_f^C$-structure} is an $\cL_{f}$-structure $(M,C,f)$ in which the second sort $S_2$ is interpreted as $C$. 
\end{definition}

\begin{remark}
By restricting to the case that $C$ is a metric space, the above notion  fits properly into \cite{BBHU}, which  is by now the most widely accepted form of ``continuous logic". That being said, other formalisms of continuous logic exist throughout the literature. For example, when $\cL$ is classical,  $\cL_f^C$-structures are a special case of the objects studied by the third author and Chavarria in \cite{ChPi} (where $C$ is allowed to be any compact Hausdorff space).
\end{remark}

\begin{remark}\label{rem:LCfUP}
Suppose $(M_i,C,f_i)_{i\in I}$ is a collection of $\cL_f^C$-structures, and let $\cU$ be an ultrafilter on $I$. Then  the ultraproduct $\prod_{\cU}(M_i,C,f_i)$ is canonically isomorphic to an $\cL_f^C$-structure in the following way. First, we have a well-defined map $\lim_\cU f_i\colon\prod_{\cU}M_i\to C$ such that $(\lim_\cU f_i)((x_i)_{\cU})=\lim_{\cU} f_i(x_i)$. So this yields an $\cL_f^C$-structure $(\prod_{\cU}M_i,C,\lim_\cU f_i)$. Now let $\prod_{\cU}f_i$ denote the map from $\prod_{\cU}M_i$ to $C^{\cU}$ sending $(x_i)_{\cU}$ to $(f_i(x_i))_{\cU}$. Then we have 
\[
\textstyle\prod_{\cU}(M_i,C,f_i)=(\prod_{\cU}M_i,C^{\cU},\prod_{\cU}f_i)\cong (\prod_{\cU}M_i,C,\lim_{\cU}f_i),
\]
where here we are using the fact that, since $C$ is compact, the map $\lim_{\cU}\colon C^{\cU}\to C$ is an isomorphism of metric structures (see the Appendix of \cite{GoLo}). Altogether, we can view an ultraproduct of $\cL_f^C$-structures as an $\cL_f^C$-structure. 
\end{remark}

\begin{remark}\label{rem:MUC}
Let $M$ be an $\cL$-structure and suppose $f\colon M^x\to C$ is an $\emptyset$-definable function. 
We will make a choice of language  $\cL_{f}$ as above, which will allow us to define (non-canonical) expansions of other $\cL$-structures to $\cL_f^C$-structures, perhaps under some additional assumptions.   

\textit{Case 1.} $\cL$ is classical. In this case we assume $\cL$ is countable. Let $\cL_{f}$ be as defined above, with $f$ having  trivial modulus of uniform continuity and arity given by $x$. Now view $f$ as a map from $S^{\cL}_x(\Th(M))$ to $C$. Use Lemma \ref{lem:Kura} to choose a continuous extension $f'\colon S^{\cL}_x(\emptyset)\to C$.  Then given any $\cL$-structure $N$, we can  expand $N$ to an $\cL_f^C$-structure by interpreting $f$ as $f'{\upharpoonright}S^{\cL}_x(\Th(N))$.\footnote{Here we use the symbol $\upharpoonright$ to denote function restriction.}

\textit{Case 2.} $\cL$ is continuous. In this case, we assume $C$ is an absolute neighborhood retract. Let $\rho$ be a metric on $[0,1]^\omega$ compatible with the product topology. View $C$ as topologically embedded in $[0,1]^\omega$. Fix an open set $U\seq [0,1]^\omega$ containing $C$ and a retraction $r\colon U\to C$. View $f$ as a map from $S^{\cL}_x(\Th(M))$ to $C$. Use Tietze Extension to choose a  continuous extension $f'\colon S^{\cL}_x(\emptyset)\to [0,1]^\omega$. 

We now define a function $\Delta\colon\R^+\to\R^+$, which will be used as a modulus of uniform continuity for $f$ as a symbol in $\cL_{f}$. Fix $\epsilon>0$. First choose $\delta>0$ such that if $x,y\in U$ and $\rho(x,y)<\delta$ then $d(r(x),r(y))<\epsilon$. Then choose $\Delta(\epsilon)>0$ such that if $p,q\in S^{\cL}_x(\emptyset)$ and $d_{\cL}(p,q)<\Delta(\epsilon)$ then $\rho(f'(p),f'(q))<\delta$. Altogether, we have that for any $p,q\in S^{\cL}_x(\emptyset)$, if $d_{\cL}(p,q)<\Delta(\epsilon)$ and $f'(p),f'(q)\in U$ then $d(r(f'(p)),r(f'(q)))<\epsilon$.  Let $\cL_{f}$ be defined as above, using $\Delta$ as a modulus of uniform continuity for $f$. 

Finally, we say that an $\cL$-structure $N$ is \emph{$f$-coherent} if $f'{\upharpoonright}S^{\cL}_x(\Th(N))$ maps to $U$. By the above construction, any $f$-coherent $\cL$-structure $N$ can be expanded to an $\cL_f^C$-structure $(N,C,f)$ by interpreting $f$ as $r\circ (f'{\upharpoonright}S^{\cL}_x(\Th(N)))$. 
\end{remark}

For the final result of this section, let $\cL$ and $C$ be as above. In the case that $\cL$ is classical, assume also that $\cL$ is countable. In the case that $\cL$ is continuous, assume also that $C$ is an absolute neighborhood retract.

\begin{corollary}\label{cor:mainmaps}
Let $M=\prod_{\cU}M_i$ be an ultraproduct of a family $\{M_i:i\in I\}$ of $\cL$-structures. Suppose $f\colon M^x\to C$ is $\emptyset$-definable, and let $\cL_f$ be as constructed in Remark \ref{rem:MUC}. Then for each $i\in I$, there is an expansion $(M_i,C,f_i)$ of $M_i$ to an $\cL_f^C$-structure so that $(M,C,f)=\prod_{\cU}(M_i,C,f_i)$. Moreover, each $f_i$ is $\emptyset$-definable as a map on the $\cL$-structure $M_i$. 
\end{corollary}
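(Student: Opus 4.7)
The plan is to directly unpack the construction from Remark \ref{rem:MUC} at each index $i$ and combine this with the ultraproduct identification in Remark \ref{rem:LCfUP}, so that the proof is essentially the per-index version of Propositions \ref{prop:DLmaps} and \ref{prop:CLmaps} packaged into the $\cL_f^C$ formalism. In both the classical and continuous cases, we already have a globally fixed extension $f'$ of $f$ (viewed as a continuous function on $S^{\cL}_x(\Th(M))$) to the full type space $S^{\cL}_x(\emptyset)$, and the required language $\cL_f$ has already been specified.

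In the classical case, for each $i\in I$ I would define $f_i\colon M_i^x\to C$ by $f_i(a)=f'(\tp_{\cL}(a))$. This is well-defined and $\emptyset$-definable because $f'\!\upharpoonright\! S^{\cL}_x(\Th(M_i))$ is continuous and $C$-valued, and it automatically satisfies the trivial modulus of uniform continuity chosen for $f$ in $\cL_f$. Hence $(M_i,C,f_i)$ is an $\cL_f^C$-structure, and the argument of Proposition \ref{prop:DLmaps} gives $f=\lim_{\cU}f_i$. Applying Remark \ref{rem:LCfUP} then yields $(M,C,f)=\prod_{\cU}(M_i,C,f_i)$.

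In the continuous case, the construction of $\cL_f$ in Remark \ref{rem:MUC} already equips $f$ with a specific modulus $\Delta$ designed precisely so that $r\circ f'$ is $\Delta$-uniformly continuous whenever its values land in the open neighborhood $U\supseteq C$. Repeating the claim from the proof of Proposition \ref{prop:CLmaps}, the set $X\subseteq I$ of indices $i$ for which $f'\!\upharpoonright\! S^{\cL}_x(\Th(M_i))$ maps into $U$ (i.e.\ for which $M_i$ is $f$-coherent) lies in $\cU$. For $i\in X$, set $f_i(a)=r(f'(\tp_{\cL}(a)))$; by the choice of $\Delta$ this respects the uniform continuity requirement, so $(M_i,C,f_i)$ is a genuine $\cL_f^C$-structure and $f_i$ is $\emptyset$-definable. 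For $i\notin X$ I would take $f_i$ to be any constant $\emptyset$-definable map into $C$. Since $X\in\cU$, the argument of Proposition \ref{prop:CLmaps} still gives $f=\lim_{\cU}f_i$, and Remark \ref{rem:LCfUP} again delivers the desired isomorphism $(M,C,f)\cong\prod_{\cU}(M_i,C,f_i)$.

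The main point to verify — and the step I would expect to require the most care — is the uniform continuity bookkeeping in the continuous case: the modulus $\Delta$ attached to the function symbol $f$ in $\cL_f$ must serve \emph{every} $M_i$ simultaneously, not just $M$. This is precisely why Remark \ref{rem:MUC} extends $f$ to all of $S^{\cL}_x(\emptyset)$ before extracting $\Delta$, so the definability of each $f_i$ as an $\cL_f^C$-symbol is a direct consequence of that construction rather than something requiring fresh argument. Once this is noted, the remaining verifications are routine applications of the earlier propositions and Remark \ref{rem:LCfUP}.
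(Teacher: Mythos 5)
Your proposal is correct and follows essentially the same route as the paper: construct each $f_i$ from the fixed extension $f'$ (via $\tp_{\cL}$ in the classical case, and via $r\circ f'$ on the $\cU$-large set of $f$-coherent indices in the continuous case, with a constant map elsewhere), note that the modulus $\Delta$ from Remark \ref{rem:MUC} is satisfied uniformly so each $(M_i,C,f_i)$ is a genuine $\cL_f^C$-structure, and conclude via Propositions \ref{prop:DLmaps}/\ref{prop:CLmaps} and Remark \ref{rem:LCfUP}. This matches the paper's proof, which cites exactly these ingredients.
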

\begin{proof}
For each $i\in I$, construct an $\emptyset$-definable map $f_i\colon M_i^x\to C$ as in the proof of Proposition \ref{prop:DLmaps} in the classical case, or Proposition \ref{prop:CLmaps} in the continuous case, while working in the context of Remark \ref{rem:MUC}. Then by construction (and Remark \ref{rem:MUC}), each $(M_i,C,f_i)$ is a well-defined $\cL_f^C$-structure, and we have $f=\lim_{\cU}f_i$. So $(M,C,f)=\prod_{\cU}(M_i,C,f_i)$ (as explained in  Remark \ref{rem:LCfUP}).
\end{proof}

\subsection{Homomorphisms to compact Lie groups}

We now prove the main results of this section. Throughout this subsection, we let $\cL$ be a classical or continuous language expanding the language of groups. Recall that an ultrafilter $\cU$ on a set $I$ is \textit{countably incomplete} if it contains a countable collection of sets whose common intersection is empty (for example, any nonprincipal ultrafilter on a countable set is countably incomplete).

\begin{theorem}\label{thm:mainLKt}
Suppose $G=\prod_{\cU}G_i$ is an ultraproduct of a family $\{G_i:i\in I\}$ of $\cL$-structures expanding amenable groups, and assume $\cU$ is countably incomplete. Let $\tau\colon G\to K$ be a definable homomorphism where $K$ is a compact Lie group. Then there are homomorphisms $\tau_i\colon G_i\to K$, for  $i\in I$, such that $\tau=\lim_{\cU}\tau_i$.
\end{theorem}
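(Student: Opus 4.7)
The plan is to combine the maps-to-compact-sorts machinery of Sections~\ref{sec:maps1}--\ref{sec:maps2} with Kazhdan's approximation theorem (Theorem~\ref{thm:Kaz}). Since $K$ is a compact Lie group, it is in particular a compact metrizable manifold and hence a compact absolute neighborhood retract. Fix the bi-invariant compatible metric $d$ on $K$ and the constant $\epsilon_K > 0$ furnished by Theorem~\ref{thm:Kaz}. In the continuous case, Proposition~\ref{prop:CLmaps} yields, for each $i \in I$, a $\emptyset$-definable map $f_i \colon G_i \to K$ with $\tau = \lim_\cU f_i$; in the classical case, Proposition~\ref{prop:DLmaps} (together with the extension in Remark~\ref{rem:DLmaps}(2) if $\cL$ is uncountable, applicable since $K$ is compact metrizable) gives the same conclusion. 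Via Corollary~\ref{cor:mainmaps}, one may even regard $(G,K,\tau) \cong \prod_\cU (G_i,K,f_i)$ as $\cL_f^K$-structures. The $f_i$ are not a priori homomorphisms.

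The first key step is to upgrade pointwise convergence to a uniform approximate-homomorphism bound: for each $\epsilon > 0$, the set $A_\epsilon = \{ i \in I : f_i \text{ is an } \epsilon\text{-approximate homomorphism} \}$ lies in $\cU$. Suppose otherwise. Then on a $\cU$-large set one may choose $x_i, y_i \in G_i$ with $d(f_i(x_iy_i), f_i(x_i) f_i(y_i)) > \epsilon$; writing $x = [x_i]_\cU$ and $y = [y_i]_\cU$ in $G$, and using that $\tau = \lim_\cU f_i$ together with continuity of multiplication and of $d$ on $K$, one gets
\[
\epsilon \leq \lim_\cU d(f_i(x_iy_i), f_i(x_i) f_i(y_i)) = d(\tau(xy), \tau(x)\tau(y)) = 0,
\]
contradicting $\epsilon > 0$. (Equivalently, this is {\L}o{\'s}'s theorem for the $\cL_f^K$-structures, after naming multiplication and $d$ on $K$.)

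To conclude, I apply countable incompleteness of $\cU$ to extract a single vanishing rate. Pick a decreasing sequence $D_1 \supseteq D_2 \supseteq \cdots$ in $\cU$ with $\bigcap_n D_n = \emptyset$ and replace each $D_n$ by $D_n \cap A_{1/n} \in \cU$, so that $f_i$ is a $1/n$-approximate homomorphism for every $i \in D_n$. For each $i$, let $\eta_i = 1/n$ where $n$ is the largest integer with $i \in D_n$, and $\eta_i = 1$ if $i \notin D_1$ (well-defined since $\bigcap_n D_n = \emptyset$). Then $\eta_i \to_\cU 0$. On the $\cU$-large set $J = \{i \in D_1 : \eta_i < \epsilon_K\}$, Theorem~\ref{thm:Kaz}, applied to the amenable group $G_i$ and the $\eta_i$-approximate homomorphism $f_i$, produces a homomorphism $\tau_i \colon G_i \to K$ with $d(f_i(a), \tau_i(a)) \leq 2\eta_i$ for all $a \in G_i$. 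For $i \notin J$, set $\tau_i$ to be the trivial homomorphism. Then $d(f_i(a_i), \tau_i(a_i)) \to_\cU 0$ for any sequence $(a_i)_{i \in I}$, so $\lim_\cU \tau_i = \lim_\cU f_i = \tau$, as required. The main obstacle is the transfer in the second paragraph: bridging the pointwise identity $\tau(xy) = \tau(x)\tau(y)$ with a uniform bound on the $f_i$'s is precisely what the continuous-logic framework of Sections~\ref{sec:maps1}--\ref{sec:maps2} was built to accommodate, and having both $A_\epsilon \in \cU$ for all $\epsilon$ and countable incompleteness is what lets us feed Kazhdan's theorem a single rate $\eta_i \to_\cU 0$.
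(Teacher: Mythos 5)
Your proposal is correct and follows essentially the same route as the paper's proof: use Proposition~\ref{prop:DLmaps}/Proposition~\ref{prop:CLmaps} (via Corollary~\ref{cor:mainmaps}) to produce definable $f_i$ with $\tau=\lim_{\cU}f_i$, transfer the homomorphism identity to a $\cU$-large approximate-homomorphism bound (the paper does this with the sentence $\sup_{xy}d(\tau(xy),\tau(x)\tau(y))$ and {\L}o\'{s}, you with explicit witnesses), then use countable incompleteness to extract a single rate $\eta_i\to_{\cU}0$ and apply Theorem~\ref{thm:Kaz} indexwise. The only quibble is cosmetic: after replacing $D_n$ by $D_n\cap A_{1/n}$ the sets need no longer be nested, so your definition of $\eta_i$ for $i\notin D_1$ is slightly ambiguous; intersecting down to keep the sequence decreasing fixes this without affecting the argument.
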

\begin{proof}
Since $K$ is second countable, $f$ is definable in some countable sublanguage of $\cL$ expanded by countably many constants (as in Remark \ref{rem:DLmaps}$(2)$). So without loss of generality, we may assume $\cL$ is countable and $f$ is $\emptyset$-definable. Note  that the conclusion of the theorem does not depend on the choice of metric on $K$. So we may  assume the metric  satisfies the statement of Theorem \ref{thm:Kaz}. We also recall that $K$ is an absolute neighborhood retract (see \cite[Corollary A.9]{Hatcher}). Let $\cL_{\tau}$ be the continuous language defined in Remark \ref{rem:MUC}, after starting with $\tau\colon G\to K$ as the initial definable function. By Corollary \ref{cor:mainmaps}, we can expand each $G_i$ to an $\cL_\tau^K$-structure $(G_i,K,\tau'_i)$ so that $(G,K,\tau)=\prod_{\cU}(G_i,K,\tau'_i)$. 

Consider the $\cL_{\tau}$-sentence $\psi=\sup_{xy}d(\tau(xy),\tau(x)\tau(y))$. Then $\psi^G=0$. Let $\epsilon_K>0$ be as in Theorem \ref{thm:Kaz}. Given $n>0$, let $I_n=\{i\in I:\psi^{G_i}\leq \epsilon_n\}$ where $\epsilon_n=\min\{\frac{1}{n},\epsilon_K\}$. Then $I_n\in\cU$ for all $n>0$. Moreover, for each $n>0$ and $i\in I_n$, $\tau'_i\colon G_i\to K$ is an $\epsilon_n$-approximate homorphism, and so by Theorem \ref{thm:Kaz} there is a homomorphism $\tau_{n,i}\colon G_i\to K$ such that $d(\tau'_i(x),\tau_{n,i}(x))\leq 2\epsilon_n\leq \frac{2}{n}$ for all $x\in G_i$. We also define $\tau_{0,i}\colon G_i\to K$ to be the trivial homomorphism for each $i\in I$. 

Now, by assumption on $\cU$, we may fix a countable sequence $(A_n)_{n=1}^\infty$ of sets in $\cU$ such that $\bigcap_{n\in S}A_n=\emptyset$ for any infinite $S\seq\Z^+$. For each $i\in I$, let $n_i\geq 1$ be maximal such that $i\in A_{n_i}$. Define $k_i$ to be  the maximal $k\in \{1,\ldots,n_i\}$ such that $i\in I_k$ if such a $k$ exists, and $k_i=0$ otherwise. Finally, set $\tau_i=\tau_{k_i,i}$. We will show $\tau=\lim_{\cU}\tau_i$. 

Fix $a\in G$ and choose a representative $(a_i)_{i\in I}$. To prove $\tau(a)=\lim_{\cU}\tau_i(a_i)$, we fix $n>0$ and show that the set 
\[
X_n\coloneqq \{i\in I:d(\tau(a),\tau_i(a_i))<{\textstyle\frac{3}{n}}\}
\]
is in $\cU$.  Let $Y_n=\{i\in I:d(\tau(a),\tau'_i(a_i))<\frac{1}{n}\}$. Then $Y_n\in\cU$, and so $Z_n\coloneqq A_n\cap I_n\cap Y_n\in\cU$. We prove $Z_n\seq X_n$, hence $X_n\in\cU$. Fix $i\in Z_n$. Then $i\in A_n$ so $n_i\geq n$. Since $i\in I_n$, it follows that $k_i\geq n>0$. Therefore $d(\tau'_i(a_i),\tau_i(a_i))=d(\tau'_i(a_i),\tau_{k_i,i}(a_i))\leq\frac{2}{k_i}\leq \frac{2}{n}$. Also, $d(\tau(a),\tau'_i(a_i))<\frac{1}{n}$ since $i\in Y_n$. By the triangle inequality, we have $i\in X_n$, as desired. 
\end{proof}

In the previous proof we lose definability of $\tau_i$ when applying Theorem \ref{thm:Kaz}. In general, we may also lose uniform continuity. However, in applications to additive and multiplicative combinatorics it is typical to work in the case that $G$ is a classical structure, or a continuous structure with a discrete metric. In these cases, we  recover the  analogue of Corollary \ref{cor:mainmaps} (but without definability) as follows.

Suppose $G=\prod_{\cU}G_i$ is an ultraproduct of a family $\{G_i:i\in I\}$ of $\cL$-structures expanding amenable groups, and assume $\cU$ is countably incomplete. Assume further that if $\cL$ is continuous then each $G_i$ is given the discrete metric. Let $\tau\colon G\to K$ be a definable homomorphism where $K$ is a compact Lie group. Define the language $\cL_{\tau}$ as in the proof of the previous theorem, using a trivial modulus of uniform continuity for $\tau$. Recall from the discussion before Remark \ref{rem:MUC} that we may view ultraproducts of $\cL_\tau^K$-structures as $\cL_\tau^K$-structures in a canonical way. 

\begin{corollary}\label{cor:mainLKt}
Let $G$ be an ultraproduct of $\cL$-structures expanding amenable groups satisfying the assumptions above, and let $\tau\colon G\to K$ be a definable homomorphism to a compact Lie group $K$. Then each $G_i$ can be expanded to an $\cL_\tau^K$-structure $(G_i,K,\tau_i)$, with $\tau_i$ a homomorphism, so that $(G,K,\tau)=\prod_{\cU}(G_i,K,\tau_i)$.
\end{corollary}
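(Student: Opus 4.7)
The plan is to feed $\tau$ through Theorem \ref{thm:mainLKt} and then repackage the resulting homomorphisms as $\cL_\tau^K$-expansions; the nontrivial content has already been done in Theorem \ref{thm:mainLKt}, so the remaining task is really bookkeeping about the continuous-logic formalism.

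First, I invoke Theorem \ref{thm:mainLKt} on the ultraproduct $G = \prod_\cU G_i$ and the definable homomorphism $\tau \colon G \to K$; the countable incompleteness of $\cU$ is available by hypothesis. This produces group homomorphisms $\tau_i \colon G_i \to K$ with $\tau = \lim_\cU \tau_i$. Note that these $\tau_i$ are no longer guaranteed to be definable in $G_i$, nor to satisfy the specific modulus of uniform continuity that came out of the Tietze/retract construction in Remark \ref{rem:MUC}; that is the price paid for replacing approximate homomorphisms by genuine ones via Kazhdan's theorem.

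Next, I verify that each triple $(G_i, K, \tau_i)$ is a legitimate $\cL_\tau^K$-structure. In the corollary, $\cL_\tau$ is built with the \emph{trivial} modulus of uniform continuity for the symbol $\tau$, so the only thing to check beyond interpreting sorts and symbols is that $\tau_i$ respects this trivial modulus. If $\cL$ is classical, all $\cL$-symbols carry trivial moduli and the underlying metric on $G_i$ is the discrete metric, so any function from $G_i$ to any metric space trivially satisfies any modulus. If $\cL$ is continuous, the hypothesis is that each $G_i$ carries the discrete metric, so the same vacuity applies. Hence $(G_i, K, \tau_i)$ is a valid $\cL_\tau^K$-structure whose second-sort function is a group homomorphism.

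Finally, I appeal to Remark \ref{rem:LCfUP} to identify the ultraproduct: the ultraproduct $\prod_\cU (G_i, K, \tau_i)$ is canonically isomorphic to $(\prod_\cU G_i,\, K,\, \lim_\cU \tau_i)$, which equals $(G, K, \tau)$ by the limit identity established in the first step. The only subtlety to highlight is why we must use the trivial modulus on $\tau$ rather than the $\Delta$ coming from Remark \ref{rem:MUC}: after the rigidity step in Theorem \ref{thm:mainLKt} we have no control over the continuity behavior of $\tau_i$ relative to any nontrivial metric on $G_i$, so the discreteness/classicality hypotheses imposed at the start of the subsection are precisely what allow Step 2 to go through.
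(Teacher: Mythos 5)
Your proof is correct and is exactly the argument the paper intends (the corollary is stated without proof as an immediate consequence): apply Theorem \ref{thm:mainLKt} to get homomorphisms $\tau_i$ with $\tau=\lim_{\cU}\tau_i$, observe that the trivial modulus for $\tau$ together with the classical/discrete-metric hypothesis makes each $(G_i,K,\tau_i)$ a genuine $\cL_\tau^K$-structure, and conclude via the canonical identification of Remark \ref{rem:LCfUP}. Your remark on why the trivial modulus (rather than the $\Delta$ of Remark \ref{rem:MUC}) must be used is precisely the point the paper flags in the paragraph preceding the corollary.
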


The previous results are phrased for groups \emph{equal} to ultraproducts. So now let $\cC$ be some class of $\cL$-structures expanding amenable groups with discrete metrics. An $\cL$-structure $G$ is \emph{pseudo-$\cC$} if for every $\cL$-sentence $\psi$ and $\epsilon>0$, there is some $H\in\cC$ such that $|\psi^G-\psi^H|<\epsilon$. Note that any pseudo-$\cC$ $\cL$-structure $G$ has a discrete metric, and so for any definable map $f$ from $G$ to a compact metric space $C$, we have a canonical $\cL_f^C$-structure $(G,f,C)$ using the trivial modulus of uniform continuity for $f$.

\begin{corollary}
Let $\cC$ be as described above.
Suppose $G$ is a pseudo-$\cC$ $\cL$-structure, and $\tau\colon G\to K$ is a definable homomorphism to a compact Lie group $K$. Then the $\cL_\tau^K$-structure $(G,K,\tau)$ is elementarily equivalent to an ultraproduct $\prod_{\cU}(G_i,K,\tau_i)$ of $\cL_\tau^K$-structures where each $G_i$ is in $\cC$ and each $\tau_i$ is a homomorphism.
\end{corollary}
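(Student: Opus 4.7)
The plan is to leverage Corollary \ref{cor:mainLKt} by realizing the $\cL_\tau^K$-structure $(G, K, \tau)$ as elementarily equivalent to a genuine ultraproduct over $\cC$ indexed by a countably incomplete ultrafilter. First, the pseudo-$\cC$ hypothesis says exactly that $\Th_\cL(G)$ lies in the closure of $\{\Th_\cL(H) : H \in \cC\}$ in the space of complete $\cL$-theories. A standard approximation argument---indexing pairs $(F, \epsilon)$ with $F$ a finite set of $\cL$-sentences and $\epsilon > 0$, picking a witness $H_{F,\epsilon} \in \cC$ for each pair, and choosing an ultrafilter $\cU$ on the index set $I$ that both refines the natural filter and (after enlarging $I$ by $\N$ if necessary) is countably incomplete---produces $G^* := \prod_\cU G_i$ with $G_i \in \cC$ such that $G^* \equiv_\cL G$.

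Next, I would extend $\tau$ to $G^*$ in a canonical way. As in Remark \ref{rem:MUC}, the definability of $\tau$ factors through a continuous function $\tau'\colon S_x^\cL(\emptyset) \to K$ (obtained via Lemma \ref{lem:Kura} in the classical case, or Tietze Extension composed with a retraction onto $K$ in the continuous case) whose restriction to $S_x^\cL(\Th(G))$ agrees with the canonical factorization of $\tau$. Since $\tau'$ depends only on $\cL$, setting $\tau^*(a) := \tau'(\tp_\cL(a))$ for $a \in G^*$ yields a well-defined $\cL_\tau^K$-structure $(G^*, K, \tau^*)$, with trivial modulus of uniform continuity (justified by the discreteness of the metric on $G^*$, inherited from the $G_i$). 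By the uniform nature of the construction, the value of any $\cL_\tau$-sentence in $(G, K, \tau)$ or $(G^*, K, \tau^*)$ is determined by $\Th_\cL$ of the underlying structure together with the fixed data $\tau'$ and the metric on $K$; since $\Th_\cL(G) = \Th_\cL(G^*)$, this gives $(G, K, \tau) \equiv (G^*, K, \tau^*)$ in $\cL_\tau^K$. In particular $\tau^*$ is a homomorphism, since this is expressible by the $\cL_\tau$-sentence $\sup_{xy} d(\tau(xy), \tau(x)\tau(y)) = 0$, which holds in $G$ and hence in $G^*$.

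Finally, apply Corollary \ref{cor:mainLKt} to the $\cL$-ultraproduct $G^* = \prod_\cU G_i$ equipped with its definable homomorphism $\tau^*$ to the compact Lie group $K$: this produces homomorphisms $\tau_i\colon G_i \to K$ such that $(G^*, K, \tau^*) = \prod_\cU (G_i, K, \tau_i)$ as $\cL_\tau^K$-structures. Concatenating this with the elementary equivalence established in the previous step yields the desired conclusion
\[
(G, K, \tau) \equiv (G^*, K, \tau^*) = \prod_{\cU} (G_i, K, \tau_i).
\]

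The main obstacle is the bookkeeping in the middle step: one must verify carefully that $(G, K, \tau) \equiv (G^*, K, \tau^*)$ in the \emph{expanded} language $\cL_\tau^K$, i.e., that the $\cL_\tau^K$-theory is determined by $\Th_\cL$ together with the fixed data $\tau'$ and $K$. This amounts to tracing through the construction of $\cL_\tau$ in Remark \ref{rem:MUC} and checking that every $\cL_\tau$-formula evaluates via $\tau'$, the metric on $K$, and $\cL$-formulas in the underlying structure in a uniform way. With this in hand, the conclusion follows by a direct chain of results already established in this section.
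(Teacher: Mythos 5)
Your route is viable and reaches the stated conclusion, but it genuinely diverges from the paper at the crucial point: how $\tau$ gets transported to an ultraproduct of members of $\cC$. Your first step (building $G^*=\prod_{\cU}G_i\equiv_{\cL}G$ with $G_i\in\cC$ and $\cU$ countably incomplete) matches the paper, which takes the index set $\mathcal{P}_{\mathrm{fin}}(\Th(G))\times\Z^+$ so countable incompleteness comes for free. But where you then define $\tau^*$ on $G^*$ via the fixed extension $\tau'$ on $S^{\cL}_x(\emptyset)$ and assert $(G,K,\tau)\equiv(G^*,K,\tau^*)$ as $\cL_\tau^K$-structures, the paper deliberately avoids any such transfer claim: it applies the Keisler--Shelah theorem to get an isomorphism $F\colon G^{\cV}\to\prod_{\cU\otimes\cV}G_{i,j}$, observes that $(G^{\cV},K,\lim_{\cV}\tau)$ is an ultrapower of $(G,K,\tau)$ (hence elementarily equivalent, by {\L}o\'{s} and Remark \ref{rem:LCfUP}), transports $\lim_{\cV}\tau$ along $F$ to a definable homomorphism $\sigma$ on $H=\prod_{\cU\otimes\cV}G_{i,j}$, and only then invokes Corollary \ref{cor:mainLKt}. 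Your approach buys directness (no Keisler--Shelah, no doubled index set); its cost is that the middle claim is precisely the nontrivial content, and it is more than the ``bookkeeping'' you describe. An $\cL_\tau$-formula does \emph{not} literally reduce to an $\cL$-formula composed with $\tau'$ and the metric on $K$: quantification over the sort $K$ and over the group sort produces definable predicates, and the correct statement is that each formula's value in the canonical expansion is a continuous function on $S^{\cL}(\Th(G))\times K^{k}$ determined by $\tau'$, $K$, and $\Th_{\cL}$. Proving this requires an induction on formulas whose group-sort quantifier step uses density of realized $\cL$-types among consistent types (over the relevant parameters) together with continuity, and in the continuous-logic case openness of restriction maps between type spaces, to maintain continuity through the induction. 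The claim is true, and with it your argument closes (including the transfer of $\sup_{x,y}d(\tau(xy),\tau(x)\tau(y))=0$, which can also be seen directly from density of realized types and continuity of $\tau'$), but that lemma is the real work; the Keisler--Shelah detour in the paper exists exactly to sidestep having to prove it.
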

\begin{proof}

By assumption, $G\equiv\prod_{\cU} G_i$ for some sequence $(G_i)_{i\in I}$ of structures in $\cC$ and  ultrafilter $\cU$ on $I$. This is proved for continuous logic in \cite[Lemma 2.4]{GoLo}. In the proof, one  sets $I=\mathcal{P}_{\text{fin}}(\Th(G))\times \Z^+$ and then lets $\cU$ be any ultrafilter containing the sets $\{(x,\ell)\in I:s\seq x,~k\leq\ell\}$ for all $(s,k)\in I$. In particular, we may assume $\cU$ is countably incomplete (even regular; see \cite[Exercise 38.5]{Jech}). 

Now, by the Keisler-Shelah Theorem \cite[Theorem 5.7]{BBHU}, there is a set $J$ and an ultrafilter $\cV$ on $J$ such that $G^{\cV}\cong (\prod_{\cU}G_i)^{\cV}$. In order to write the latter structure as a single ultraproduct, we define $G_{i,j}=G_i$ for $(i,j)\in I\times J$. Then a routine  exercise shows that $(\prod_{\cU}G_i)^{\cV}$ is isomorphic to $H:=\prod_{\cU\otimes\cV}G_{i,j}$. So we have an isomorphism $F\colon G^{\cV}\to H$. Let $\tau^{\cV}\colon G^{\cV}\to K$ denote the ultralimit map $\lim_{\cV}\tau$. Then another routine exercise shows that $\tau^{\cV}$ is definable (over $G$). Therefore $\sigma=\tau^{\cV}\circ F\inv$ is a definable homomorphism from $H$ to $K$ and, moreover, $F$ induces an $\cL_\tau^K$-structure isomorphism from $(G^{\cV},K,\tau^{\cV})$ to $(H,K,\sigma)$. Note also that $(G,K,\tau)$ and $(G^{\cV},K,\tau^{\cV})$ are elementarily equivalent as $\cL_\tau^K$-structures. Altogether, $(G,K,\tau)$ and $(H,K,\sigma)$ are elementarily equivalent. Finally, apply Corollary \ref{cor:mainLKt} to $(H,K,\sigma)$, noting that $\cU\otimes\cV$ is also countably incomplete.
\end{proof}

\section{Unitary Bohr Neighborhoods and  Amenable Groups}\label{sec:unitary}

Throughout this section, we will use the following standard notation for product sets in groups. Given a group $G$ and $A,B\seq G$, let $AB=\{ab:a\in A,~b\in B\}$. The sets $A^n$ for $n\geq 1$ are then defined inductively as $A^1=A$ and $A^{n+1}=A^nA$. Finally, $A\inv=\{a\inv:a\in A\}$ and, for $n<0$, $A^n=(A\inv)^{\nv n}$.

\subsection{Preliminaries on Bohr neighborhoods} 

We recall the following notation used in \cite{CPT} for Bohr neighborhoods obtained from maps to arbitrary metric groups.

\begin{definition}\label{def:Bohr}
Let $K$ be a  metric group. Given a group $G$ and a real number $\delta>0$, a \textbf{$(\delta,K)$-Bohr neighborhood in $G$} is a set of the form $\tau\inv(U)$ where $\tau\colon G\to K$ is a homomorphism and $U$ is the open identity neighborhood in $K$ of radius $\delta$.
\end{definition}

Bohr neighborhoods are often used in additive combinatorics as approximations to subgroups. We note some basic properties along these lines. 

\begin{remark}
Let $K$ be a metric group.  Suppose $B$ is a $(\delta,K)$-Bohr neighborhood in a group $G$, witnessed by  $\tau\colon G\to K$. Then $B=B\inv$, $1\in B$, and $B$ is ``normal" in the sense that $gBg\inv=B$ for any $g\in G$. Moreover, $B^2$ is contained in the $(2\delta,K)$-Bohr neighborhood defined from $\tau$. 
\end{remark}

One usually sees Bohr neighborhoods defined in the setting where $K$ is compact. We will focus mainly on the case of unitary groups and torus groups. Let $\U(n)$ denote the unitary group of degree $n$ with metric induced by the standard matrix operator norm on $\GL(n)$ (this is also called  the ``spectral  norm"). Let $\T(n)$ denote the maximal torus in $\U(n)$ consisting of all diagonal matrices. 

\begin{remark}\label{rem:metric}
The metric on $\U(n)$ restricts to $\T(n)$ as the product of the complex distance metric on $S^1$ (in $\C$). This is slightly different than  sources such as \cite{CoBogo,CPT,GreenSLAG}, which use   the arclength metric on $S^1$. So in particular,  a $(\delta,\T(n))$-Bohr neighborhood here corresponds to a  ``$(\delta',n)$-Bohr neighborhood" in \cite{CoBogo,CPT}, where $\delta'$ depends uniformly on $\delta$ to account for the change in metric on $S^1$. 
\end{remark}

By definition, any $(\delta,\T(n))$-Bohr neighborhood is also  a $(\delta,\U(n))$-Bohr neighborhood. For torsion groups or abelian groups, one obtains the following converse statements. 

\begin{proposition}\label{prop:UtoT}
Let $B$ be a $(\delta,\U(n))$-Bohr neighborhood in a group $G$.
\begin{enumerate}[$(a)$]
\item If $G$ is abelian then $B$ is a $(\delta,\T(n))$-Bohr neighborhood.
\item If $G$ is a torsion group then there is a normal subgroup $H\leq G$ of index $O_n(1)$ such that $B\cap H$ is a $(\delta,\T(n))$-Bohr neighborhood in $H$.
\end{enumerate}
\end{proposition}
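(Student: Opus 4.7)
For part $(a)$, my plan is to use simultaneous diagonalization together with the fact that conjugation by a unitary matrix is an isometry with respect to the operator norm. Explicitly, write $B=\tau\inv(U)$ where $\tau\colon G\to\U(n)$ is a homomorphism and $U$ is the open $\delta$-ball around the identity. Since $G$ is abelian, $\tau(G)$ is a commuting family of normal (indeed, unitary) matrices, so by the spectral theorem there is some $P\in\U(n)$ with $P\tau(G)P\inv\seq\T(n)$. Define $\tau'\colon G\to\T(n)$ by $\tau'(g)=P\tau(g)P\inv$; this is a group homomorphism, and $\|\tau'(g)-I\|=\|P(\tau(g)-I)P\inv\|=\|\tau(g)-I\|$ since the operator norm is invariant under unitary conjugation. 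Because the metric on $\T(n)$ is the restriction of the metric on $\U(n)$, the open $\delta$-ball $U'$ around the identity in $\T(n)$ is just $U\cap\T(n)$. Hence $(\tau')\inv(U')=\{g:\|\tau'(g)-I\|<\delta\}=\{g:\|\tau(g)-I\|<\delta\}=B$, showing that $B$ is a $(\delta,\T(n))$-Bohr neighborhood.

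For part $(b)$, the strategy is to reduce to part $(a)$ by passing to a bounded-index normal subgroup of $G$ on which $\tau$ has abelian image. Given $\tau\colon G\to \U(n)$ witnessing $B$, the image $\tau(G)$ is a torsion subgroup of $\GL_n(\C)$, so by the Jordan-Schur Theorem (Fact \ref{fact:JST}) there is an integer $d=d(n)$ and an abelian subgroup $A\leq\tau(G)$ of index at most $d$. Take $H_0=\tau\inv(A)$, which is a subgroup of $G$ of index at most $d$, and then let $H$ be the normal core $\bigcap_{g\in G}gH_0g\inv$; this is a normal subgroup of $G$ of index at most $d!=O_n(1)$. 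Since $H\leq H_0$, the image $\tau(H)$ is contained in $A$ and hence is abelian.

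The final step is to apply the argument of part $(a)$ to the restriction $\tau{\upharpoonright}H\colon H\to\U(n)$, whose image lies in the abelian subgroup $\tau(H)$. Simultaneous diagonalization yields $P\in\U(n)$ with $P\tau(H)P\inv\seq\T(n)$, and the map $\tau'(h)\coloneqq P\tau(h)P\inv$ is a homomorphism $H\to\T(n)$ with $\|\tau'(h)-I\|=\|\tau(h)-I\|$. Therefore $B\cap H=\{h\in H:\|\tau(h)-I\|<\delta\}$ equals $(\tau')\inv(U')$ where $U'$ is the open $\delta$-ball around the identity in $\T(n)$, so $B\cap H$ is a $(\delta,\T(n))$-Bohr neighborhood in $H$ as required.

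There is no serious obstacle here; the main point to verify carefully is that the metric on $\T(n)$ is literally the restriction of that on $\U(n)$, so that identity balls correspond under restriction without any distortion of the parameter $\delta$. The only mild subtlety in $(b)$ is the replacement of $H_0$ by its normal core to obtain normality while keeping the index bounded by a function of $n$ alone.
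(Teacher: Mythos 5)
Your proof is correct and follows essentially the same route as the paper: conjugate the abelian image into $\T(n)$, using that the operator-norm metric is unitarily bi-invariant and that the metric on $\T(n)$ is the restriction of that on $\U(n)$, and in $(b)$ reduce to $(a)$ via the Jordan--Schur Theorem. The only (harmless) difference is that the paper takes the abelian subgroup $A\leq\tau(G)$ furnished by Jordan--Schur to be normal in $\tau(G)$, so that $H=\tau\inv(A)$ is already normal of index at most $d$, whereas you use only the non-normal form of Fact \ref{fact:JST} and pass to the normal core, which weakens the index bound to $d!$ but is still $O_n(1)$.
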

\begin{proof}
Fix a homomorphism $\tau\colon G\to \U(n)$ witnessing that $B$ is a $(\delta,\U(n))$-Bohr neighborhood.

Part $(a)$. Assume $G$ is abelian. Then the image of $\tau$ is an abelian subgroup of $\U(n)$, and hence contained in some conjugate of $\T(n)$ (see \cite[Theorem 2]{Newman}).  So after replacing $\tau$ by a conjugate, we may  assume $\tau$ maps to $\T(n)$.

Part $(b)$. Assume $G$ is a torsion group. By Fact \ref{fact:JST}, there is a normal abelian subgroup $A\leq \tau(G)$ of index $d\leq O_n(1)$ in $\tau(G)$. As in part $(a)$, we  may assume  $A\leq T(n)$. Let $H=\tau\inv(A)$. Then $H$ is a normal subgroup of $G$ of index $d$. Let $\tau_0\colon H\to T(n)$ be the restriction of $\tau$ to $H$. Then $B\cap H=\tau_0\inv(U)$, where $U$ is the open identity neighborhood of radius $\delta$ in $\T(n)$. 
\end{proof}

Next we show that Bohr neighborhoods are ``large". For finite groups, a lower bound on the cardinality of a Bohr neighborhood can be obtained from  an averaging argument. See \cite[Lemma 4.1]{GreenSLAG} or \cite[Lemma 4.20]{TaoVu}, both of which deal with abelian groups; the proof is rewritten for arbitrary finite groups in \cite[Proposition 4.5]{CPT}. The same method would work to bound the measure of a Bohr neighborhood in an  amenable group. Instead we will give here a short elementary proof valid for any group, where ``large" is formulated using genericity. Given a group $G$, we say that $A\seq G$ is \textbf{$n$-generic (in $G$)} if $G$ can be covered by $n$ left translates of $A$. Note that if $G$ is finite this implies $|A|\geq |G|/n$. More generally, if $G$ is amenable then this implies $\mu(A)\geq 1/n$ for \emph{any} left-invariant measure $\mu$ on $G$.

\begin{lemma}\label{lem:bigBohr}
Let $K$ be a group and fix $U\seq K$. 
Suppose there is a subset $V\seq K$ such that $V\inv V\seq U$ and $V$ is $n$-generic in $K$ for some $n\geq 1$. Then for any group $G$ and any homomorphism $\tau\colon G\to K$, the set $\tau\inv(U)$ is $n$-generic in $G$.
\end{lemma}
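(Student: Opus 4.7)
The plan is straightforward: transfer a cover of $K$ by translates of $V$ to a cover of $G$ by translates of $\tau\inv(U)$, using the hypothesis $V\inv V\seq U$ to absorb the discrepancy between a translate $k_iV$ and its preimage.

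First I would fix elements $k_1,\ldots,k_n\in K$ such that $K=\bigcup_{i=1}^n k_iV$, using $n$-genericity of $V$. This pulls back to a cover $G=\bigcup_{i=1}^n \tau\inv(k_iV)$. The goal is to replace each $\tau\inv(k_iV)$ by a left translate of $\tau\inv(U)$.

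For each index $i$ with $\tau\inv(k_iV)\neq\emptyset$, I would pick a witness $g_i\in \tau\inv(k_iV)$, writing $\tau(g_i)=k_iv_i$ for some $v_i\in V$. Then for any $g\in\tau\inv(k_iV)$ we have $\tau(g)=k_iv$ for some $v\in V$, so
\[
\tau(g_i\inv g)=\tau(g_i)\inv\tau(g)=(k_iv_i)\inv(k_iv)=v_i\inv v \in V\inv V\seq U,
\]
which gives $g\in g_i\tau\inv(U)$. Thus $\tau\inv(k_iV)\seq g_i\tau\inv(U)$. For indices $i$ where $\tau\inv(k_iV)$ is empty, I would simply drop them (or repeat any already chosen translate to keep the count at $n$). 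Combining these inclusions yields a cover of $G$ by at most $n$ left translates of $\tau\inv(U)$, which is exactly $n$-genericity.

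There is no real obstacle here; the only subtlety is ensuring the witness $g_i$ can be chosen, which is why the empty-preimage case must be handled separately, and the only ``trick'' is the identity $v_i\inv v\in V\inv V$ that exploits the hypothesis $V\inv V\seq U$ rather than $V\seq U$.
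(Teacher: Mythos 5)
Your proof is correct and follows essentially the same route as the paper: cover $K$ by $n$ translates of $V$, choose a witness $g_i$ in each nonempty preimage $\tau\inv(k_iV)$, and use $\tau(g_i)\inv\tau(g)=v_i\inv v\in V\inv V\seq U$ to show $\tau\inv(k_iV)\seq g_i\tau\inv(U)$. The handling of empty preimages matches the paper's restriction to the subset $E_0$ of useful translates, so there is nothing to add.
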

\begin{proof}
Fix $\tau\colon G\to K$ and let $B=\tau\inv (U)$. Fix a finite set $E\seq K$ of size  $n$ such that $K=EV$. Let $E_0$ be the set of $a\in E$ such that $\tau(G)\cap aV\neq\emptyset$. For each $a\in E_0$, fix some $v_a\in V$ such that $av_a\in\tau(G)$, and fix some $g_a\in G$ such that $\tau(g_a)=av_a$. Set $F=\{g_a:a\in E_0\}$. Then $|F|\leq n$, and we show that $G=FB$. 

Fix $x\in G$. Choose $a\in E$ such that $\tau(x)\in aV$. Then $a\in E_0$, and we have 
\[
\tau(x)\in av_av_a\inv V=\tau(g_a)v_a\inv V\seq \tau(g_a)V\inv V\seq \tau(g_a)U.
\]
Therefore $\tau(g_a\inv x)\in U$, i.e., $g_a\inv x\in B$, i.e., $x\in g_aB\seq FB$.
\end{proof}

In the context of the previous proposition, if $K$ is a topological group and $U$ is an identity neighborhood, then one can always find some identity neighborhood $V$ satisfying $V\inv V\seq U$. If $K$ is also compact, then such a $V$ is always $n$-generic for some $n\geq 1$. This can be made  quantitative in metric groups using covering numbers. In particular, given a compact metric group $K$, let  $C_{K,\epsilon}$ be the \emph{$({<}\epsilon)$-covering number for $K$}, i.e., the minimum size of an $({<}\epsilon)$-net for $K$. Let $U_\epsilon$ denote the open identity neighborhood in $K$ of radius $\epsilon$. Then a subset $E\seq K$ is an $({<}\epsilon)$-net for $K$ if and only if $K=EU_\epsilon$. Therefore $C_{K,\epsilon}$ is the least integer $n$ such that $U_\epsilon$ is $n$-generic in $K$. Moreover, by the triangle inequality we have $U_\epsilon\inv U_\epsilon\seq U_{2\epsilon}$. Putting these remarks in the context of Lemma \ref{lem:bigBohr}, we obtain the following conclusion.

\begin{corollary}
Let $K$ be a compact metric group. Then for any group $G$, any $(\delta,K)$-Bohr neighborhood in $G$ is $C_{K,\delta/2}$-generic in $G$.
\end{corollary}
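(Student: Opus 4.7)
The plan is to deduce this directly from Lemma \ref{lem:bigBohr} by feeding in the right $U$ and $V$, using the observations about covering numbers laid out just before the statement.

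First I would fix a homomorphism $\tau\colon G\to K$ witnessing that the given Bohr neighborhood equals $\tau\inv(U_\delta)$, where $U_\epsilon$ denotes the open identity ball of radius $\epsilon$ in $K$. The aim is then to apply Lemma \ref{lem:bigBohr} with $U=U_\delta$ and $V=U_{\delta/2}$, so the remaining work is to verify the two hypotheses on $V$ at this scale.

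For the containment $V\inv V\seq U$, I would invoke the triangle inequality observation $U_\epsilon\inv U_\epsilon\seq U_{2\epsilon}$ from the paragraph preceding the corollary, applied with $\epsilon=\delta/2$. For genericity, I would use the characterization also recalled there: a subset $E\seq K$ is an $({<}\epsilon)$-net precisely when $K=EU_\epsilon$, so $C_{K,\epsilon}$ is exactly the least $n$ for which $U_\epsilon$ is $n$-generic in $K$. Specializing to $\epsilon=\delta/2$ gives that $V=U_{\delta/2}$ is $C_{K,\delta/2}$-generic.

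With both hypotheses of Lemma \ref{lem:bigBohr} in hand for $n=C_{K,\delta/2}$, the lemma immediately yields that $\tau\inv(U_\delta)$ is $C_{K,\delta/2}$-generic in $G$, which is the desired conclusion. There is no real obstacle here, as the corollary is essentially just a bookkeeping step that packages the preceding remarks into the final statement; the only minor subtlety worth noting in passing is that the triangle inequality step implicitly relies on the usual symmetry properties of the ambient metric on $K$, which is tacitly assumed throughout this subsection.
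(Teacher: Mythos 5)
Your proposal is correct and matches the paper's intended argument exactly: the paper deduces the corollary from Lemma \ref{lem:bigBohr} by taking $U=U_\delta$ and $V=U_{\delta/2}$, using the containment $U_\epsilon\inv U_\epsilon\seq U_{2\epsilon}$ and the fact that $C_{K,\epsilon}$ is the least $n$ for which $U_\epsilon$ is $n$-generic in $K$. Your side remark about invariance of the metric (needed for the containment $U_{\delta/2}\inv U_{\delta/2}\seq U_\delta$) is an accurate observation about a hypothesis the paper uses tacitly.
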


The following bound on covering numbers in unitary groups is proved  in \cite{Szarek}. 

\begin{fact}\label{fact:covU}
$C_{U(n),\epsilon}\leq (c_0/\epsilon)^{n^2}$ for some absolute constant $c_0>0$. 
\end{fact}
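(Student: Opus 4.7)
The plan is to use a volume/packing argument mediated by the exponential map $\exp\colon\mathfrak{u}(n)\to\U(n)$, where $\mathfrak{u}(n)$ is the Lie algebra of $n\times n$ skew-Hermitian matrices (a real vector space of dimension $n^2$). Since $\U(n)$ is a compact Lie group of real dimension $n^2$, the problem should reduce to the standard covering number estimate for a ball in an $n^2$-dimensional normed space.

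First, I would use two properties of $\exp$. On one hand, $\exp$ maps the operator-norm ball $B_\pi=\{X\in\mathfrak{u}(n):\|X\|_{\text{op}}\leq \pi\}$ onto $\U(n)$: any unitary is diagonalizable with eigenvalues of the form $e^{it}$ for $t\in[-\pi,\pi]$, and the corresponding diagonal skew-Hermitian preimage has operator norm $\max_j|t_j|\leq\pi$. On the other hand, $\exp$ is Lipschitz on $B_\pi$ with constant $e^\pi$, \emph{independently of $n$}. This follows from the power series expansion together with the telescoping identity $X^n-Y^n=\sum_{k=0}^{n-1}X^k(X-Y)Y^{n-1-k}$ and submultiplicativity of the operator norm, which yield
\[
\|\exp(X)-\exp(Y)\|_{\text{op}}\leq \sum_{m\geq 1}\frac{m\pi^{m-1}}{m!}\|X-Y\|_{\text{op}}=e^{\pi}\|X-Y\|_{\text{op}}.
\]

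Next, I would invoke the classical volume argument in finite-dimensional normed spaces: in a real normed space of dimension $d$, the unit ball admits a $({<}\eta)$-net of size at most $(3/\eta)^{d}$ (obtained by comparing volumes of an $(\eta/2)$-packing whose balls lie inside the $(1+\eta/2)$-ball). Applied in $(\mathfrak{u}(n),\|\cdot\|_{\text{op}})$ and rescaled to $B_\pi$, this gives a $({<}\epsilon/e^\pi)$-net for $B_\pi$ of size at most $(3\pi e^\pi/\epsilon)^{n^2}$. Pushing such a net forward through $\exp$ produces a $({<}\epsilon)$-net for $\U(n)$ of the same cardinality, so $C_{\U(n),\epsilon}\leq(c_0/\epsilon)^{n^2}$ with $c_0=3\pi e^\pi$.

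The main point I expect to require care is the uniformity of the Lipschitz constant in $n$; this is what makes the operator norm the ``right'' norm for this argument, since the calculation above goes through because $\|X^k\|_{\text{op}}\leq \|X\|_{\text{op}}^k$ uses only submultiplicativity, which holds independently of the dimension. For $\epsilon$ bounded away from zero the bound is trivial, since $\U(n)$ has operator-norm diameter at most $2$, so one can focus on small $\epsilon$ and absorb any residual constants into $c_0$.
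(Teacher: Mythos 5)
Your proof is correct, and it differs from the paper in that the paper offers no argument at all for this fact: it simply cites Szarek's metric entropy estimates for homogeneous spaces (\cite{Szarek}), which give sharp two-sided bounds of the form $(c/\epsilon)^{n^2}$ for $\U(n)$ and related spaces. Your route is the natural self-contained alternative: surjectivity of $\exp$ from the operator-norm ball of radius $\pi$ in $\mathfrak{u}(n)$ onto $\U(n)$, a dimension-free Lipschitz constant $e^{\pi}$ coming only from submultiplicativity of the operator norm, and the standard volumetric $(3/\eta)^{d}$ covering bound in a $d$-dimensional normed space with $d=n^2$; pushing the net forward gives $c_0=3\pi e^{\pi}$. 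The details check out: the telescoping estimate $\|X^m-Y^m\|\le m\pi^{m-1}\|X-Y\|$ sums to $e^{\pi}$, a maximal $\eta$-separated subset of the ball is a $({<}\eta)$-net compatible with the paper's strict-inequality convention for $C_{K,\epsilon}$, and the rescaling $\eta=\epsilon/(\pi e^{\pi})$ is handled correctly (with the regime $\eta>1$, i.e.\ large $\epsilon$, absorbed as you note using the diameter bound, which is all that is needed since only an absolute constant is claimed). What the citation to \cite{Szarek} buys is a much better constant and matching lower bounds, neither of which is needed for Fact \ref{fact:covU} or for Corollary \ref{cor:Bohrbound}; what your argument buys is elementarity and transparency about why the exponent is $n^2$ and why the constant is dimension-free.
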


\begin{corollary}\label{cor:Bohrbound}
Let $G$ be a group. Then any $(\delta,\U(n))$-Bohr set in $G$ is $(c/\delta)^{n^2}$-generic in $G$ for some absolute constant $c>0$.\end{corollary}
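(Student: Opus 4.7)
The plan is to simply chain together the three preceding ingredients: Lemma \ref{lem:bigBohr}, the corollary immediately preceding Fact \ref{fact:covU} (bounding genericity of Bohr neighborhoods by a covering number), and Fact \ref{fact:covU} itself. In fact, the unnumbered corollary right before Fact \ref{fact:covU} already does almost all the work: it says that any $(\delta,\U(n))$-Bohr neighborhood in any group $G$ is $C_{\U(n),\delta/2}$-generic in $G$.

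So the only remaining step is to bound $C_{\U(n),\delta/2}$ using Fact \ref{fact:covU}. Applied with $\epsilon = \delta/2$, Fact \ref{fact:covU} gives
\[
C_{\U(n),\delta/2} \leq (c_0/(\delta/2))^{n^2} = (2c_0/\delta)^{n^2}.
\]
Setting $c = 2c_0$ then yields the stated bound $(c/\delta)^{n^2}$, and the corollary follows directly.

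The proof is essentially a one-line chain of inequalities, so there is no real obstacle; the substantive content all lies in Lemma \ref{lem:bigBohr} and Szarek's covering estimate Fact \ref{fact:covU}. The only mild care needed is to remember that we use the ball of radius $\delta/2$ (not $\delta$) in the covering number, because the genericity argument in Lemma \ref{lem:bigBohr} requires a set $V$ with $V^{-1}V \subseteq U$, and the natural choice $V = U_{\delta/2}$ satisfies $V^{-1}V \subseteq U_\delta$ by the triangle inequality (using bi-invariance of the metric on $\U(n)$). This factor of $2$ is absorbed into the absolute constant $c$.
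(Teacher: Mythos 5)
Your proposal is correct and is exactly the paper's (implicit) argument: combine the preceding corollary, which gives $C_{\U(n),\delta/2}$-genericity, with Fact \ref{fact:covU} at $\epsilon=\delta/2$, and absorb the factor of $2$ by taking $c=2c_0$, just as the paper notes. Nothing further is needed.
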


\textbf{For the rest of this section, $c$  denotes the  constant in the previous corollary.} (So $c=2c_0$ where $c_0$ is from Fact \ref{fact:covU}.)

\subsection{Bogolyubov's lemma for amenable groups} We now come to the main result of this section. 

\begin{theorem}\label{thm:amenBogo}
Fix $\alpha>0$ and a function $\epsilon\colon\R^+\times\Z^+\to \R^+$. 
Let $G$ be an amenable group with a left-invariant measure $\mu$. Suppose $A\seq G$ is such that $\mu(A)\geq \alpha$. Then there is a $(\delta,\U(n))$-Bohr neighborhood $B\seq G$, with $\delta\inv,n\leq O_{\alpha,\epsilon}(1)$, such that:
\begin{enumerate}[$(i)$]
\item  $B\seq (AA\inv)^2$, 
\item $AAA\inv$ contains a translate of $B$, and 
\item $\mu(B\backslash AA\inv)<\epsilon(\delta,n)\mu(B)$. 
\end{enumerate}
\end{theorem}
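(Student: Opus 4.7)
The approach I would take is to argue by contradiction via an ultraproduct, combining Hrushovski's stabilizer theorem with Theorem \ref{thm:main1} (in the form of Corollary \ref{cor:mainLKt}) to transfer a limiting Bohr structure back to the sequence. Suppose the conclusion fails for some $\alpha>0$ and $\epsilon$. Then for each $N\geq 1$ there is an amenable group $G_N$ with left-invariant mean $\mu_N$ and a set $A_N\seq G_N$ with $\mu_N(A_N)\geq\alpha$ for which no $(\delta,\U(n))$-Bohr neighborhood with $\delta\inv,n\leq N$ satisfies all of $(i)$--$(iii)$. In a countable language $\cL$ expanding the group language by a predicate for each $A_N$, and with a nonprincipal ultrafilter $\cU$ on $\N$, I would form $G=\prod_\cU G_N$ and $A=\prod_\cU A_N$; the means $\mu_N$ assemble into a left-invariant Keisler measure $\mu$ on $\cL$-definable subsets of $G$, with $\mu(A)\geq\alpha$.

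Next, applying the stabilizer theorem of \cite{HruAG} (or \cite{MOS}) to $A$ in this definably amenable group, I would obtain the stabilizer $H$ of the normalized measure on $A$ as a type-definable normal subgroup of $G$ of bounded index satisfying $H\seq AA\inv$. Since $A$ is (up to $\mu$-null) a union of left cosets of $H$, one also has $a_0H\seq a_0\cdot AA\inv\seq AAA\inv$ for any $a_0\in A$, and $\mu(\pi\inv(V)\setminus AA\inv)\to 0$ as $V$ shrinks to the identity of the compact Hausdorff quotient $K:=G/H$, where $\pi\colon G\to K$ is the canonical definable compactification. By Peter--Weyl, $K$ is an inverse limit of compact Lie groups each embedding into some $\U(n)$; a standard compactness argument using the definability of $(AA\inv)^2$, $AAA\inv$, and Keisler-measure comparisons then produces a definable homomorphism $\tau\colon G\to\U(n)$ and a radius $\delta>0$ such that the $(\delta,\U(n))$-Bohr neighborhood $B:=\tau\inv(U_\delta)$ (with $U_\delta$ the open identity ball of radius $\delta$ in $\U(n)$) satisfies $B\seq(AA\inv)^2$, $a_0B\seq AAA\inv$ for some $a_0\in A$, and $\mu(B\setminus AA\inv)<\tfrac{1}{2}\epsilon(\delta,n)\mu(B)$. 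Corollary \ref{cor:mainLKt} applied to $\tau$ then yields homomorphisms $\tau_N\colon G_N\to\U(n)$ with $(G,\U(n),\tau)=\prod_\cU(G_N,\U(n),\tau_N)$ as $\cL_\tau^{\U(n)}$-structures; each of $(i)$--$(iii)$ for $B$ is expressible as a continuous first-order statement in this two-sorted language that transfers by {\L}o{\'s}'s theorem to $B_N:=\tau_N\inv(U_\delta)\seq G_N$ for $\cU$-almost-every $N$, so that taking $N$ with $\delta\inv,n\leq N$ contradicts the choice of $(G_N,A_N)$.

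The main obstacle will be the clean transfer of condition $(iii)$, which is a measure inequality rather than a classical first-order statement: this is exactly what the continuous-logic framework of Section \ref{sec:maps2} is designed to handle, letting me view $A$ as a $\{0,1\}$-valued predicate, $\mu$ as a Keisler measure in continuous logic, and $B_N$ as the preimage of an open ball under the function symbol $\tau_N$. A secondary delicate point is extracting from the stabilizer theorem the particular product structure $AAA\inv$ appearing in $(ii)$; fortunately the inclusion $H\seq AA\inv$ already gives $a_0H\seq AAA\inv$ directly via left-translation by any $a_0\in A$, so no separate product-structure argument should be required.
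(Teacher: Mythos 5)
Your overall architecture (contradiction, ultraproduct with the measures assembled into a definable Keisler measure, stabilizer theorem, Peter--Weyl reduction to a compact Lie quotient inside $\U(n)$, then Corollary \ref{cor:mainLKt} and {\L}o\'{s} to pull everything back to the $G_N$) is exactly the paper's strategy, but two of your intermediate claims are not what the cited theorems give, and they are load-bearing. First, the stabilizer theorem does not yield a bounded-index type-definable subgroup $H$ with $H\seq AA\inv$, nor that $A$ is a union of cosets of $H$ up to $\mu$-null sets. What it gives (applied in a sufficiently saturated elementary extension of the ultraproduct, a step you omit, and in the \cite{MOS} form, since $\mu$ is only \emph{left}-invariant and \cite{HruAG} as such wants bi-invariance) is $H=St(p)^2=(pp\inv)^2\seq (AA\inv)^2$ together with the statement that $H\backslash AA\inv$ is contained in a definable set $Z$ with $\mu(Z)=0$. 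Consequently your derivation of $(ii)$ via ``$a_0H\seq a_0AA\inv\seq AAA\inv$ for any $a_0\in A$'' rests on the false inclusion $H\seq AA\inv$; the correct route is to pick a coset $gH$ on which $A$ is wide and argue that for every $a\in H$ the wide set $a\inv g\inv A\cap H$ must meet $AA\inv$ (because $H\backslash AA\inv$ is null), whence $gH\seq AAA\inv$. The translating element is this particular $g$, not an arbitrary element of $A$.

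Second, condition $(iii)$ does not transfer by {\L}o\'{s} in the way you describe. The two-sorted framework of Section \ref{sec:maps2} contains only $(G_i,\U(n),\tau_i)$; the measure is not part of that structure, and $B_N=\tau_N\inv(U_\delta)$ is not $\cL$-definable, so ``$\mu_N(B_N\backslash A_NA_N\inv)<\epsilon(\delta,n)\mu_N(B_N)$'' is not a sentence you can transfer, and you give no mechanism to bound $\mu_N(B_N)$ from below. The paper's fix has two ingredients you are missing: (a) the error set is pushed inside the \emph{definable} null set $Z$, so what transfers is the $\cL$-statement that $\mu_s(Z_s)$ is small (via definability of the Keisler measure), together with the $\cL_\tau^{\U(n)}$-statement that every $x$ with $d(\tau_s(x),1)<\delta$ lies in $(A_sA_s\inv)^2\cap((A_sA_s\inv)\cup Z_s)$; and (b) the lower bound $\mu_s(B)\geq (\delta/c)^{n^2}$ coming from genericity of Bohr neighborhoods (Lemma \ref{lem:bigBohr}, Fact \ref{fact:covU}, Corollary \ref{cor:Bohrbound}), which converts the absolute smallness of $\mu_s(Z_s)$ into the relative bound $\mu_s(B\backslash A_sA_s\inv)<\epsilon(\delta,n)\mu_s(B)$. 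Without the definable envelope $Z$ and the covering-number estimate for $\U(n)$, your plan for $(iii)$ does not close.
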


\begin{remark}\label{rem:amenBogo}
The main model-theoretic ingredient in Theorem \ref{thm:amenBogo} is a variation of the second author's stabilizer theorem  \cite{HruAG} proved by Montenegro, Onshuus, and Simon \cite{MOS}.  The original version from \cite{HruAG} would apply if we made the stronger assumption of bi-invariance of $\mu$.
 The variation from \cite{MOS} formulates certain assumptions under which left-invariance suffices to obtain  conditions $(i)$ and $(iii)$ (in the full generality of ``S1-ideals"). Assuming bi-invariance, the authors of \cite{MOS} also obtain further aspects that would yield  $(ii)$.
 In our case we will be able to obtain $(ii)$ due to the fact that we are working with a particularly nice S1-ideal, namely the null sets of a probability  measure. 
\end{remark}

\begin{proof}[\textnormal{\textbf{Proof of Theorem \ref{thm:amenBogo}}}]
Suppose not. Then  we obtain a sequence $(G_s,A_s,\mu_s)_{s\geq 1}$ such that for all $s\geq 1$, $G_s$ is an amenable group with a left-invariant measure $\mu_s$, $A_s\seq G_s$ with $\mu_s(A_s)\geq\alpha$, and there is no $(\delta,\U(n))$-Bohr neighborhood $B$ in $G_s$ satisfying $(i)$, $(ii)$, and $(iii)$  with $\delta\inv, n\leq s$. Let $\cU$ be a nonprincipal ultrafilter on $\Z^+$. Set $G=\prod_{\cU}G_s$ with the full internal language. Let $A=\prod_{\cU}A_s$ and let $\mu=\lim_{\cU}\mu_s$. By standard arguments, $\mu$ is definable over $\emptyset$ with respect to a suitable countable sublanguage. So going forward we may work in a countable language $\cL$ (which still contains the group language and a predicate for $A$).

 By construction, $\mu(A)\geq\alpha>0$.  Now suppose $\widetilde{G}\succ G$ is sufficiently saturated. Given a definable set $X\seq G$, we let $\widetilde{X}$ denote $X(\widetilde{G})$. Let $\widetilde{\mu}$ be the canonical $\emptyset$-definable extension of $\mu$ to a Keisler measure over $\widetilde{G}$. The next claim condenses the part of the proof requiring the stabilizer theorem of \cite{MOS}. We assume  familiarity with the relevant notions. In our setting, the structures $\widetilde{G}$ and $G$ play the role of $G$ and $M$ in \cite{MOS} (respectively). Thus the type space $S_G(M)$ in \cite{MOS} is for us  $S_1(G)$, i.e., the Stone space of complete types over the Boolean algebra of $G$-definable subsets of $\widetilde{G}$.

 \medskip
 
 \noindent\emph{Claim 1.} There is a $G$-type-definable bounded-index normal subgroup $\Gamma\leq \widetilde{G}$ such that $\Gamma\seq (\widetilde{A}\widetilde{A}\inv)^2$, $\widetilde{A}\widetilde{A}\widetilde{A}\inv$ contains a coset of $\Gamma$, and   $\Gamma\backslash \widetilde{A}\widetilde{A}\inv$ is contained in $\widetilde{Z}$ for some $G$-definable set $Z$ with $\mu(Z)=0$.
 
 \noindent\emph{Proof.} We use \emph{wide} to mean $\widetilde{\mu}$-wide. Since $\widetilde{\mu}$ is a $G$-invariant Keisler measure on $\widetilde{G}$, the ideal of non-wide definable subsets of $\widetilde{G}$ is $G$-invariant and S1 on $\widetilde{G}$ (see the discussion at the start of \cite[Section 2.1]{MOS}). Since $\widetilde{A}$ is $G$-definable and wide, there is a wide type $p\in S_1(G)$ containing $\widetilde{A}$. Let $St(p)=\{g\in\widetilde{G}:gp\cap p\text{ is wide}\}$, and set $\Gamma=St(p)^2$. By \cite[Theorem 2.12]{MOS} (using $X=\widetilde{G}$ to satisfy assumption (B1)), $\Gamma$ is a wide $G$-type-definable connected subgroup of $\widetilde{G}$. Moreover,  $\Gamma=(pp\inv)^2$ and $\Gamma\backslash St(p)$ is contained in a small union of non-wide $G$-definable sets. Since $\widetilde{\mu}(\widetilde{G})$ is finite, it follows that $\Gamma$ has bounded index in $\widetilde{G}$. Then, as $\Gamma$ is type-definable over $G$ and connected, it follows that $\Gamma$ is normal in $\widetilde{G}$. 
 
 Note that $\Gamma=(pp\inv)^2\seq (\widetilde{A}\widetilde{A}\inv)^2$. Also, since $St(p)\seq pp\inv\seq \widetilde{A}\widetilde{A}\inv$, we have $\Gamma\backslash \widetilde{A}\widetilde{A}\inv\seq \Gamma\backslash St(p)$. Since $\Gamma\backslash \widetilde{A}\widetilde{A}\inv$ is type-definable over $G$, we conclude that  $\Gamma\backslash \widetilde{A}\widetilde{A}\inv$ is contained in a non-wide (i.e., $\widetilde{\mu}$-null) $G$-definable subset of $\widetilde{G}$. It remains to show that $\widetilde{A}\widetilde{A}\widetilde{A}\inv$ contains a coset of $\Gamma$. First, since $\widetilde{A}$ is wide and definable, and $\Gamma$ has bounded index, there must be some coset $g\Gamma$ such that $\widetilde{A}\cap g\Gamma$ is wide. We show $g\Gamma\seq \widetilde{A}\widetilde{A}\widetilde{A}\inv$. Fix $a\in \Gamma$. Then $a\inv g\inv \widetilde{A}\cap \Gamma$ is a wide subset of $\Gamma$. By our previous conclusion for $\Gamma\backslash \widetilde{A}\widetilde{A}\inv$, it follows that $a\inv g\inv \widetilde{A}\cap \Gamma$ must intersect $\widetilde{A}\widetilde{A}\inv$. So $a\inv g\inv \widetilde{A}\cap \widetilde{A}\widetilde{A}\inv\neq\emptyset$, i.e., $ga\in \widetilde{A}\widetilde{A}\widetilde{A}\inv$. \clqed\medskip

 Let $\Gamma\leq\widetilde{G}$ and $Z\seq G$ be as in Claim 1. Then $\widetilde{G}\!/\Gamma$ is a compact group under the logic topology (see \cite{PilCLG}).
One can then use Peter-Weyl to replace $\Gamma$ by a subgroup of $\widetilde{G}$ satisfying the same conclusions of the Claim, but with $\widetilde{G}\!/\Gamma$ a compact Lie group (e.g., this follows from the proof of \cite[Lemma 2.9]{CPT} together with the fact that every coset of $\Gamma$ is type-definable over $G$).

Let $K=\widetilde{G}\!/\Gamma$ and let $\widetilde{\tau}\colon \widetilde{G}\to K$ be the quotient homomorphism. Note that $\widetilde{\tau}$ is definable over $G$, and  $\tau\coloneqq\widetilde{\tau}|_G$ is a definable compactification of $G$.    \medskip

\noindent\emph{Claim 2.} There is some $\gamma>0$ and some $g\in G$ satisfying the following properties.
\begin{enumerate}[$(i)$]
\item For any $x\in G$, if $d(\tau(x),1)<\gamma$ then $x\in (AA\inv)^2\cap ((AA\inv)\cup Z)$. 
\item For any $x\in G$, if $d(\tau(x),\tau(g))<\gamma$ then $x\in AAA\inv$.
\end{enumerate} 

\noindent\emph{Proof.}
To ease notation, set $D=(AA\inv)^2\cap ((AA\inv)\cup Z$ and $E=AAA\inv$.
By Claim 1, we have $\Gamma\seq \widetilde{D}$ and $a\Gamma\seq \widetilde{E}$ for some $a\in \widetilde{G}$. Given $\gamma>0$, set $B_\gamma=\{g\in\widetilde{G}:d(\widetilde{\tau}(g),1)\leq\gamma\}$. Then $\Gamma=\bigcap_{\gamma>0}B_\gamma$ and, since $\widetilde{\tau}$ is definable, each $B_\gamma$ is type-definable. By compactness, there is some $\gamma>0$ such that $B_\gamma\seq \widetilde{D}$ and $aB_\gamma\seq \widetilde{E}$. In other words, for any $x\in\widetilde{G}$ if $d(\widetilde{\tau}(x),1)<\gamma$ then $x\in \widetilde{D}$ and if $d(\widetilde{\tau}(x),\widetilde{\tau}(a))<\gamma$ then $x\in \widetilde{E}$. By the triangle inequality and density of $\widetilde{\tau}(G)$ in $K$, we can shrink $\gamma$ and obtain the same conclusion but with $a$ replaced by some $g\in G$. The claim now follows.\clqed\medskip

Fix $\gamma>0$ and $g\in G$ as in Claim 2. Since $K$ is a compact Lie group, we may assume it is a closed subgroup of $\U(n)$ for some $n$, and hence view $\tau$ as a definable homomorphism from $G$ to $\U(n)$. By Corollary \ref{cor:mainLKt}, there are homomorphisms $\tau_s\colon G_s\to \U(n)$ for each $s\geq 1$ so that $(G,\U(n),\tau)=\prod_{\cU} (G_s,\U(n),\tau_s)$ as $\cL_\tau^{\U(n)}$-structures.  Choose a representative $(g_s)_{s\geq 1}$ for $g$, and let $Z=\prod_\cU Z_s$ where each $Z_s\seq G_s$. Set $\delta=\gamma/2$ and let $I$ be the set of $s\geq 1$ satisfying the following properties:
\begin{enumerate}[$(1)$]
\item For any $x\in G_s$, if $d(\tau_s(x),1)<\delta$ then $x\in (A_sA_s\inv)^2\cap ((A_s A_s\inv)\cup Y_s)$.
\item For any $x\in G_s$, if $d(\tau_s(x),\tau_s(g_s))<\delta$ then $x\in A_sA_s A_s\inv$. 
\item $\mu_s(Z_s)<\epsilon(\delta,n)(c/\delta)^{n^2}$.
\end{enumerate}
Then $I\in \cU$ by the Claim and {\L}o\'{s}'s Theorem, and since $\delta<\gamma$ and $\mu(Z)=0$. 

Since $\cU$ is nonprincipal, we may choose some $s\in I$ such that $\delta\inv, n\leq s$. Let $B=\{x\in G_s:d(\tau_s(x),1)<\delta\}$. Then $B$ is a $(\delta,\U(n))$-Bohr neighborhood in $G_s$. By $(1)$, we have $B\seq (A_s A_s\inv)^2$ and $B\backslash A_s A_s\inv\seq Z_s$. By $(2)$ and translation invariance of $d$, we have $g_sB\seq A_sA_s A_s\inv$. Finally, by $(3)$ and Corollary \ref{cor:Bohrbound}, we have
\[
\mu_s(B\backslash A_s A_s\inv)\leq\mu(Z_s)<\epsilon(\delta,n)(c/\delta)^{n^2}\leq \epsilon(\delta,n)\mu_s(B).
\]
Altogether, this contradicts the choice of $(G_s,A_s,\mu_s)$. 
\end{proof}

\begin{remark}\label{rem:covering}
We point out that since Theorem \ref{thm:amenBogo} is formulated with an arbitrary \emph{function} $\epsilon$, conditions $(i)$ and $(ii)$ can be obtained directly from $(iii)$.  In particular, fix $\alpha>0$ and $\epsilon\colon \R^+\times\Z^+\to\R^+$. Without loss of generality, assume $\alpha\leq 1/2$. Define $\epsilon^*\colon\R^+\times\Z^+\to\R^+$ so that
\[
\epsilon^*(\delta,n)=\min\{\epsilon(\delta/2,n),\alpha\} (\delta/2c)^{n^2}.
\]
Now let $G$ be an amenable group with a left-invariant measure $\mu$, and fix $A\seq G$ with $\mu(A)\geq\alpha$. Suppose we have a $(\delta,\U(n))$-Bohr set $B_0\seq G$ with $\mu(B_0\backslash AA\inv)<\epsilon^*(\delta,n)$. Let $B$ be the $(\delta/2,\U(n))$-Bohr set in $G$ defined using the same homomorphism to $\U(n)$ that yields $B_0$. So $B^2\seq B_0$.  We claim that  $B$ satisfies conditions $(i)$, $(ii)$, and $(iii)$ with respect to $\epsilon$. For $(iii)$, note that
\[
\mu(B\backslash A A\inv)\leq \mu(B_0\backslash A A\inv)<\epsilon^*(\delta,n)\leq \epsilon(\delta/2,n)(\delta/2c)^{n^2}\leq \epsilon(\delta/2,n)\mu(B).
\]
Conditions $(i)$ and $(ii)$ can be obtained from the following general statement.

Suppose $U,V\seq G$ are such that $1\in U=U\inv$ and  $U^2\seq V$. Fix some $W\seq G$.
\begin{enumerate}[$(1)$]
\item If $\mu(V\backslash W)<\frac{1}{2}\mu(U)$ then $U\seq WW\inv$. 
\item If $U$ is $m$-generic and $\mu(V\backslash W)<\frac{\alpha}{m}$, then $AW\inv$ contains a left translate of $U$.
\end{enumerate}
Setting $U=B$ (so $m\leq (c/2\delta)^{n^2}$ in (2)), $V=B_0$, and $W=AA\inv$, these  statements yield $(i)$ and $(ii)$, respectively. We leave the proofs of $(1)$ and $(2)$ to the reader (for $(2)$, the argument is similar to final part of the proof of Claim 1 in Theorem \ref{thm:amenBogo}). 
\end{remark}

Next we derive some corollaries of Theorem \ref{thm:amenBogo}. By a \emph{representation} of a group $G$, we mean a homomorphism from $G$ to $\textnormal{GL}_n(\C)$ for some $n$, called the \emph{dimension} of the representation. A \emph{unitary} representation is a representation mapping to $\U(n)$. As an aside, we note the result of Dixmier \cite{Dixmier} that any uniformly bounded representation of an amenable group can be conjugated to a unitary representation. 

\begin{corollary}\label{cor:Gowers}
 For any $\alpha,\epsilon>0$ there is an integer $d\geq 1$ such that the following holds. Let $G$ be an amenable group with a left-invariant measure $\mu$. Suppose $A\seq G$ is such that $\mu(A)\geq\alpha$, and assume $G$ has no nontrivial unitary representations of dimension less than $d$. Then $G=AAA\inv$ and $\mu(AA\inv )>1-\epsilon$. 
 \end{corollary}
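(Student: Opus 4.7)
The plan is to derive the corollary as a direct consequence of Theorem \ref{thm:amenBogo} by exploiting the trivial observation that a homomorphism $G\to\U(n)$ is exactly a unitary representation of $G$ of dimension $n$.

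First I would apply Theorem \ref{thm:amenBogo} to the given data $(G,\mu,A)$, with the real parameter $\alpha$ as in the corollary and the \emph{constant} function $\tilde\epsilon\colon\R^+\times\Z^+\to\R^+$ with value $\epsilon$. This yields integers $\delta^{-1},n\leq N$ for some $N=N(\alpha,\epsilon)$ and a $(\delta,\U(n))$-Bohr neighborhood $B=\tau^{-1}(U)\seq G$, where $\tau\colon G\to\U(n)$ is a homomorphism and $U$ is the open $\delta$-ball around the identity in $\U(n)$, satisfying conditions $(i)$--$(iii)$. Then I would set $d=N+1$.

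The key step is the following: since the inclusion $\U(n)\hookrightarrow \GL_n(\C)$ is a faithful representation, the homomorphism $\tau$ \emph{is} a unitary representation of $G$ of dimension $n\leq N<d$. By the hypothesis that $G$ has no nontrivial unitary representations of dimension less than $d$, we conclude $\tau$ is trivial, and therefore $B=\tau^{-1}(U)=G$.

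Now conclusions $(ii)$ and $(iii)$ of Theorem \ref{thm:amenBogo} read off immediately. From $(ii)$, $AAA^{-1}$ contains a translate of $B=G$, so $AAA^{-1}=G$. From $(iii)$, and using that $\mu$ is a probability measure (so $\mu(B)=\mu(G)=1$),
\[
1-\mu(AA^{-1})=\mu(G\setminus AA^{-1})=\mu(B\setminus AA^{-1})<\tilde\epsilon(\delta,n)\mu(B)=\epsilon,
\]
as required. There is no real obstacle here once Theorem \ref{thm:amenBogo} is in hand; the entire content is the observation that low-dimensional Bohr neighborhoods in groups without low-dimensional unitary representations must be trivial (i.e.\ all of $G$), which is precisely the ``quasirandomness''-type hypothesis in the spirit of Gowers.
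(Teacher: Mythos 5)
Your argument is correct and is exactly the intended derivation: the paper states Corollary \ref{cor:Gowers} without proof as an immediate consequence of Theorem \ref{thm:amenBogo}, the point being precisely that the witnessing homomorphism $\tau\colon G\to\U(n)$ with $n\leq O_{\alpha,\epsilon}(1)$ is a low-dimensional unitary representation, hence trivial under the quasirandomness hypothesis, so $B=G$. Your handling of the constant function $\tilde\epsilon$, the choice of $d$, and the passage from $(ii)$, $(iii)$ to the two conclusions (using finite additivity of the probability measure $\mu$) is all as it should be.
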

 
 The previous corollary connects to \emph{quasirandom} finite groups, which are defined by Gowers in \cite{GowQRG} using graph-theoretic quasirandomness. Roughly speaking, Gowers shows that a finite group is quasirandom if and only if its nontrivial representations have large dimension  (see \cite[Theorem 4.5]{GowQRG} for a precise statement). It is also shown that if $G$ is a finite group with no nontrivial representations of dimension less than $d$, and  if $A,B,C\seq G$ each have size larger than $|G|/d^{1/3}$, then $G=ABC$.\footnote{See \cite[Corollary 1]{NiPy} for an explanation of how this statement follows from results in \cite{GowQRG}.} So Corollary \ref{cor:Gowers} can be viewed as a non-quantitative extension of this result to arbitrary amenable groups, but only with triple products of the form $AAA\inv $. (Though note that in the context of the corollary, if we also have $B\seq G$ with $\mu(B)\geq\epsilon$ then any left translate of $B$ intersects $AA\inv$, and thus $G=BAA\inv$.)
 
 We can also consider (necessarily infinite) groups with no nontrivial unitary representations of any finite dimension, in which case we have the following conclusion.
 
 \begin{corollary}\label{cor:simple}
 Let $G$ be an amenable group with a left-invariant measure $\mu$, and assume that $G$ has no nontrivial finite-dimensional unitary representations. Then for any $A\seq G$, if $\mu(A)>0$ then $G=AAA\inv$ and $\mu(AA\inv )=1$.
 \end{corollary}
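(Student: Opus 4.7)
The plan is to deduce this directly from Corollary \ref{cor:Gowers}. Fix $A \subseteq G$ with $\mu(A) > 0$, and set $\alpha = \mu(A)$. Let $\epsilon > 0$ be arbitrary. Apply Corollary \ref{cor:Gowers} to the parameters $\alpha$ and $\epsilon$ to obtain a corresponding integer $d \geq 1$. The hypothesis that $G$ has no nontrivial unitary representations of dimension less than $d$ is automatically satisfied, because by assumption $G$ has no nontrivial finite-dimensional unitary representations whatsoever. The conclusion of Corollary \ref{cor:Gowers} therefore yields both $G = AAA\inv$ and $\mu(AA\inv) > 1 - \epsilon$.

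The first conclusion does not depend on $\epsilon$, so we are already done with $G = AAA\inv$. For the second, since $\epsilon > 0$ was arbitrary we have $\mu(AA\inv) \geq 1 - \epsilon$ for every $\epsilon > 0$, and since $\mu$ is a probability measure with $\mu(G) = 1$, this forces $\mu(AA\inv) = 1$.

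There is essentially no obstacle here: the entire content of Corollary \ref{cor:simple} is packaged inside Corollary \ref{cor:Gowers}, and one only needs to observe that having no nontrivial finite-dimensional unitary representation at all is a uniform strengthening of the hypothesis ``no nontrivial unitary representation of dimension $<d$'' which holds for every $d$. The only minor point worth making explicit is that the $d$ produced by Corollary \ref{cor:Gowers} is allowed to depend on $\epsilon$, but since we merely need the conclusion to hold for each $\epsilon$ (not a single uniform $d$), this dependence is harmless.
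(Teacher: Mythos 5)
Your proposal is correct and matches the paper's intended derivation: the paper states Corollary \ref{cor:simple} as an immediate consequence of Corollary \ref{cor:Gowers}, exactly as you argue, since the absence of any nontrivial finite-dimensional unitary representation satisfies the dimension hypothesis for every $d$, and letting $\epsilon\to 0$ gives $\mu(AA\inv)=1$. No gaps to report.
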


The previous corollary applies to any infinite  simple amenable group, since such groups have no nontrivial finite-dimensional representations by the Tits alternative for linear groups (in characteristic $0$; see \cite[Theorem 1]{Tits}). Examples of such groups include Hall's universal group \cite{Hall} or a finitary alternating group on an infinite set (both of which are locally finite, hence amenable). The existence of infinite finitely-generated simple amenable groups was proved by Juschenko and Monod \cite{JuschMon}.

 As a final corollary, we note that as is usually the case for  results of this kind, if one restricts to groups of some fixed finite exponent, then Bohr neighborhoods can  be replaced by subgroups (see Corollary \ref{cor:boundedexp} for a precise statement). The simplest way to obtain this is to adjust the proof of Theorem \ref{thm:amenBogo} to reflect the fact that any compact Hausdorff torsion group is profinite \cite[Theorem 4.5]{Iltis}. This removes the need to work with Bohr neighborhoods, approximate homomorphisms, Kazhdan's theorem, or continuous logic. Indeed, Corollary \ref{cor:boundedexp} is a relatively straightforward consequence of the stabilizer theorems of \cite{HruAG,MOS} (and the aforementioned  fact from \cite{Iltis}). In light of these remarks, we will give an alternate proof which is  the same in spirit, but argues directly from Theorem \ref{thm:amenBogo} as stated, together with the following fact (well-known for $(\delta,\T(n))$-Bohr neighborhoods).

 \begin{fact}\label{fact:Unexp}
 Fix an integer $r\geq 1$, and let $\gamma_r$ be the complex distance between $e^{2\pi i/r}$ and $1$.
 Suppose $G$ is a group of  exponent $r$ and $B$ is a $(\delta,\U(n))$-Bohr neighborhood in $G$, with $\delta\leq \gamma_r$. Then $B$ is a normal subgroup of $G$.
 \end{fact}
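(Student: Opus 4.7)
The plan is to show that under these hypotheses, $B$ in fact coincides with $\ker \tau$, which is automatically a normal subgroup of $G$.

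First, I would unpack the definition: $B = \tau\inv(U_\delta)$, where $\tau\colon G \to \U(n)$ is the homomorphism witnessing that $B$ is a $(\delta,\U(n))$-Bohr neighborhood and $U_\delta = \{u \in \U(n) : \|u - I\| < \delta\}$ is the open $\delta$-ball about the identity in the operator norm. Since $G$ has exponent $r$, for any $x \in G$ one has $\tau(x)^r = \tau(x^r) = I$, so $\tau(x)$ is a unitary matrix whose order divides $r$. In particular it is diagonalizable, and its eigenvalues are $r$-th roots of unity.

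Second, I would invoke the standard spectral fact that for a unitary (hence normal) matrix $u$, the difference $u - I$ is also normal, so the operator norm satisfies $\|u - I\| = \max_\lambda |\lambda - 1|$, where the maximum runs over the eigenvalues $\lambda$ of $u$. Therefore, for any $x \in B$, each eigenvalue $\lambda$ of $\tau(x)$ is an $r$-th root of unity satisfying $|\lambda - 1| < \delta \leq \gamma_r$. Since $\gamma_r = |e^{2\pi i/r} - 1| = 2\sin(\pi/r)$ is precisely the minimum of $|\zeta - 1|$ over all non-identity $r$-th roots of unity $\zeta$, the strict inequality forces $\lambda = 1$ for every eigenvalue of $\tau(x)$. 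Diagonalizing then gives $\tau(x) = I$, i.e., $x \in \ker \tau$. Since the reverse inclusion $\ker \tau \seq B$ is immediate, we conclude $B = \ker \tau$, which is a normal subgroup of $G$.

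There is really no substantive obstacle here: the proof is an elementary consequence of the spectral theorem for unitary matrices, reducing the $\U(n)$ statement to the familiar $\T(n)$ computation by diagonalizing the torsion elements of $\tau(G)$. The one place one should be mildly careful is in arranging the strict-versus-non-strict inequalities correctly, but this is handled by the fact that the Bohr ball $U_\delta$ is open while the extremal non-identity $r$-th root of unity sits at distance exactly $\gamma_r$ from $1$.
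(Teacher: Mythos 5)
Your proof is correct and follows essentially the same route as the paper: both reduce to showing that any nontrivial element of $\U(n)$ satisfying $x^r=1$ lies at distance at least $\gamma_r$ from the identity (via unitary diagonalization, equivalently the spectral identity $\|u-I\|=\max_\lambda|\lambda-1|$), whence $\tau(G)\cap U=\{1\}$ and $B=\ker\tau$. No gaps.
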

 \begin{proof}
Let $\tau\colon G\to\U(n)$ be a homomorphism such that $B=\tau\inv(U)$, where $U$ is the open identity neighborhood in $\U(n)$ of radius $\delta$. Then $\tau(G)$ is a subgroup of $\U(n)$ of exponent $r$. We claim that $\tau(G)\cap U=\{1\}$, which then will yield $B=\ker\tau$. To establish the claim, it suffices to fix some nontrivial $x\in \U(n)$ with $x^r=1$, and show $d(x,1)\geq\delta$. Since $x$ is (unitarily) diagonalizable, there are $y\in \U(n)$ and $z\in \T(n)$ such that $x=y zy\inv$. By invariance of the metric, we have $d(x,1)=d(z,1)$. Moreover, $z^r=y\inv x^r y=1$. So each diagonal entry in $z$ is a complex $r^{\textnormal{th}}$ root of unity. Thus $d(z,1)\geq\gamma_r\geq \delta$.
 \end{proof}

 \begin{corollary}\label{cor:boundedexp}
 Fix $\alpha>0$, $r\geq 1$, and a function $\epsilon\colon\Z^+\to \R^+$. Let $G$ be an amenable group of exponent $r$, and suppose $\mu$ is a left-invariant measure on $G$. Suppose $A\seq G$ is such that $\mu(A)\geq \alpha$. Then there is a normal subgroup $H\leq G$ of index $m\leq O_{\alpha,r,\epsilon}(1)$ such that $H\seq (AA\inv)^2$, $AAA\inv$ contains a coset of $H$, and $\mu(H\backslash AA\inv)<\epsilon(m)\mu(H)$.
 \end{corollary}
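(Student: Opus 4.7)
The plan is to derive this from Theorem \ref{thm:amenBogo} by applying it with a carefully chosen tolerance function, then shrinking the output Bohr neighborhood to a normal subgroup via Fact \ref{fact:Unexp}. The key observation is that because $G$ has exponent $r$, any $(\delta,\U(n))$-Bohr neighborhood $\tau\inv(U_\delta)$ with $\delta\leq\gamma_r$ is automatically $\ker\tau$, and hence a normal subgroup of finite index.

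For each pair $(\delta,n)\in\R^+\times\Z^+$, set $M(\delta,n)=\lceil(c/\min(\delta,\gamma_r))^{n^2}\rceil$ and define
\[
\epsilon^*(\delta,n)=\min_{1\leq m\leq M(\delta,n)}\frac{\epsilon(m)}{m}.
\]
Apply Theorem \ref{thm:amenBogo} with tolerance $\epsilon^*$ to obtain a $(\delta,\U(n))$-Bohr neighborhood $B_0=\tau\inv(U_\delta)\seq G$, satisfying conditions $(i)$--$(iii)$, where $\delta\inv,n\leq O_{\alpha,\epsilon^*}(1)=O_{\alpha,r,\epsilon}(1)$. Here $\tau\colon G\to\U(n)$ is a homomorphism and $U_\delta$ is the open identity neighborhood of radius $\delta$ in $\U(n)$. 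Set $\delta^*=\min(\delta,\gamma_r)$ and $H=\tau\inv(U_{\delta^*})\seq B_0$. By Fact \ref{fact:Unexp}, $H=\ker\tau$ is a normal subgroup of $G$, and by Corollary \ref{cor:Bohrbound} its index $m=[G:H]$ is at most $M(\delta,n)\leq O_{\alpha,r,\epsilon}(1)$.

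The containment $H\seq B_0$ together with conditions $(i)$ and $(ii)$ for $B_0$ immediately yields $H\seq(AA\inv)^2$ and that $AAA\inv$ contains a coset of $H$ (any left translate of $B_0$ inside $AAA\inv$ contains the corresponding translate of $H$). For the measure bound, left-invariance of $\mu$ and the disjoint coset decomposition of $G$ along $H$ give $\mu(H)=1/m$, so $\mu(B_0)\leq 1=m\mu(H)$, and then
\[
\mu(H\backslash AA\inv)\leq\mu(B_0\backslash AA\inv)<\epsilon^*(\delta,n)\mu(B_0)\leq m\epsilon^*(\delta,n)\mu(H)\leq\epsilon(m)\mu(H),
\]
by the defining property of $\epsilon^*$ and the bound $m\leq M(\delta,n)$. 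The main delicacy is the slightly self-referential shape of $\epsilon^*$: the tolerance fed to Theorem \ref{thm:amenBogo} must already anticipate the index of the eventual subgroup. This is handled cleanly by the fact that the index bound $M(\delta,n)$ depends only on $\delta$, $n$, and the fixed parameter $r$, so $\epsilon^*$ is a well-defined function of $(\delta,n)$ before any appeal to Theorem \ref{thm:amenBogo}.
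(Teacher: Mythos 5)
Your proposal is correct and follows essentially the same route as the paper's proof: the same choice of tolerance $\epsilon^*(\delta,n)=\min\{\epsilon(m)/m : m\leq (c/\min\{\delta,\gamma_r\})^{n^2}\}$, the same application of Theorem \ref{thm:amenBogo}, the same shrinking of the radius to $\min\{\delta,\gamma_r\}$ so that Fact \ref{fact:Unexp} turns the Bohr neighborhood into a normal subgroup, and the same use of Corollary \ref{cor:Bohrbound} and $\mu(H)=1/m$ to close the measure estimate. No changes needed.
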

 \begin{proof}
 Let $\gamma_r$ be as in Fact \ref{fact:Unexp}. Define $\epsilon^*\colon\R^+\times\Z^+\to\R^+$ so that
 \[
 \epsilon^*(\delta,n)=\min\{\epsilon(m)/m:m\leq (c/\!\min\{\delta,\gamma_r\})^{n^2}\}.
 \]
  Applying Theorem \ref{thm:amenBogo} with $\alpha$ and $\epsilon^*$, we obtain a $(\delta,\U(n))$-Bohr neighborhood $B$ in $G$, with $\delta\inv, n\leq O_{\alpha,r,\epsilon}(1)$, such that $B\seq (AA\inv)^2$, $AAA\inv$ contains a translate of $B$, and $\mu(B\backslash AA\inv)<\epsilon^*(\delta,n)$. Let $\delta_*=\min\{\delta,\gamma_r\}$, and note that we still have $\delta_*\inv\leq O_{\alpha,r,\epsilon}(1)$.  Let $H$ be the $(\delta_*,\U(n))$-Bohr neighborhood in $G$ defined using the same homomorphism to $\U(n)$ that yields $B$. Then $H$ is a normal subgroup of $G$ by Fact \ref{fact:Unexp}. Clearly $H\seq B$, so $H\seq (AA\inv)^2$ and $AAA\inv$ contains a coset of $H$. Let $m$ be the index of $H$. Then $m\leq (c/\delta_*)^{n^2}\leq O_{\alpha,r,\epsilon}(1)$.  Moreover, 
 \[
 \mu(H\backslash AA\inv)\leq \mu(B\backslash AA\inv)<\epsilon^*(\delta,n)\leq \epsilon(m)\mu(H).\qedhere
 \]
 \end{proof}
 
In \cite{PalPFS}, Palac\'{i}n proves a version of the previous result in which $G$ is finite, $A$ is also ``product-free", and $\epsilon$ is constant.\footnote{The actual statement in \cite{PalPFS} asserts  $|H\backslash AA\inv|<\epsilon|G|$, which is trivial since the index of $H$ depends on $\epsilon$; however the proof strategy easily allows for $\epsilon$ to be a function of the index of $H$.}

\begin{remark}\label{rem:Berg}
Recall that a group $G$ is amenable if and only if it admits a (left) F{\o}lner net. In this case, such nets can be used to define the upper and lower \emph{Banach density} of subsets of $G$. In much of the literature from combinatorial number theory, results on amenable groups like those above are often phrased in terms of Banach density. So we note that a translation to this setting can be obtained using the general result that if $G$ is amenable and $A\seq G$, then the upper  (resp., lower) Banach density of $A$ is the supremum (resp., infimum) of $\mu(A)$ over all left-invariant measures $\mu$. Moreover, for a given $A$ the supremum and infimum are both attained (but in general using different measures).  See \cite[Section 2]{HinSt} for details. Via this translation, one can see the relationship between the results of this section and body of work from combinatorial number theory concerning the structure of a product set $AB$, where $A,B$ are subsets of an amenable group (often countable) with positive upper Banach density (see, e.g., \cite{BBF,BFW}). These results are then orthogonal to ours since they focus on products of two distinct sets, but are not uniform in the parameters. (Note that an ultraproduct of amenable groups need not be amenable; see also \cite{OHPo}.)
\end{remark}

\subsection{Remarks on the topological case}
It could be interesting to extend the results above to the topological setting. We give below a step in this direction.  Up to this point the groups $G$ considered in this section were discrete; we now consider locally compact groups. Define a \emph{non-commutative Bohr neighborhood} in a locally compact group $G$ to be the pullback of an open identity neighborhood in a compact group $C$ under a continuous homomorphism from $G$ to $C$. In particular this definition--while it reads the same as the discrete case (Definition \ref{def:Bohr})--now assumes  the homomorphism mentioned there is continuous. Note also that for the purposes of the result below, it is easy to see using Peter-Weyl that $C$ above can be equivalently taken to be a unitary group $\U(n)$ for some $n$.

We assume here a set theory background including some large cardinals, so that all projective sets are universally measurable. See, for example,  \cite[Chapter V]{Kechris}. With a little attention one can no doubt make do with measurability of analytic sets, which is provable in $\mathsf{ZF}$. But to avoid technical difficulties we assume projective determinacy. 

Recall that a locally compact topological group $G$ is  \emph{amenable} if and only if it admits a left-invariant finitely additive probability measure on its Haar-measurable subsets (see \cite{Paterson} for general background on amenablility).

\begin{proposition}\label{prop:topamen}
Assume projective determinacy. Let $G$ be a separable locally compact amenable group with Haar measure algebra $\mathcal{H}$. Let $\mu\colon\mathcal{H}\to[0,1]$ be a left-invariant finitely additive probability measure. Suppose $X\in\mathcal{H}$ is such that $X=X\inv$ and  $\mu(X)>0$. Then $X^4$ contains a non-commutative Bohr neighborhood.
\end{proposition}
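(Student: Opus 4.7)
The plan is to mirror the proof of Theorem~\ref{thm:amenBogo}, replacing the ultraproduct/Kazhdan step by a direct automatic-continuity argument that exploits the topological structure on $G$. First, expand $G$ to a classical first-order structure $M$ in a countable language by adjoining predicates for $X$ together with sufficiently many Haar-measurable subsets of $G$; separability of $G$ permits this. The left-invariant finitely additive probability measure $\mu$ is then a $G$-invariant Keisler measure on the $M$-definable subsets of $G$, and extends canonically to a $G$-invariant Keisler measure $\widetilde\mu$ on a sufficiently saturated elementary extension $G^*\succ M$.

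Next apply the stabilizer theorem of \cite{MOS} as in Claim~1 of the proof of Theorem~\ref{thm:amenBogo}, with $A=X$: since $X=X\inv$ is wide and symmetric, this produces a type-definable-over-$M$ bounded-index normal subgroup $\Gamma\leq G^*$ with $\Gamma\seq (XX\inv)^2=X^4$ (and no residual null set to absorb, because $X$ is symmetric). Use Peter--Weyl to pass to a quotient in which $K:=G^*/\Gamma$ is a compact Lie group, write $\widetilde\tau\colon G^*\to K$ for the quotient map, and set $\tau=\widetilde\tau|_G$. Since $\Gamma=\bigcap_{\gamma>0}\widetilde\tau\inv(\overline{B(1,\gamma)})$ is a type-definable subset of the $M$-definable set $X^4$, logic compactness yields a $\gamma>0$ with $\widetilde\tau\inv(\overline{B(1,\gamma)})\seq X^4$; restricting to $G$, we obtain $\tau\inv(\overline{B(1,\gamma)})\seq X^4$.

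The remaining step is to upgrade the abstract group homomorphism $\tau$ to a continuous one. For each open $U\seq K$, $\tau\inv(U)$ can be written as a union of sets $D\cap G$ for $M$-definable $D$ with $\widetilde\tau(D)\seq U$; working inside a countable sublanguage of $M$ (permitted by the separability of $G$ and second countability of $K$) presents $\tau\inv(U)$ as a projective set built over the $\sigma$-algebra of Haar-measurable sets. Under the projective determinacy hypothesis, all projective sets are universally measurable, so $\tau$ is a Haar-measurable homomorphism from the separable locally compact group $G$ to the Polish group $K$. The classical Steinhaus--Weil automatic continuity theorem then forces $\tau$ to be continuous, and hence $\tau\inv(B(1,\gamma))$ is an open subset of $G$ contained in $X^4$. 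By definition this is a non-commutative Bohr neighborhood in $G$.

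The main obstacle is the descriptive-set-theoretic bookkeeping in the last paragraph: one must identify $\tau\inv(U)$ with a projective set of bounded complexity relative to the Haar $\sigma$-algebra, so that the projective determinacy hypothesis can be invoked in the precise form needed to conclude universal measurability. Once measurability is in hand, the passage to continuity via Steinhaus--Weil is classical, and the compactness and stabilizer steps are straightforward topological adaptations of the discrete argument underlying Theorem~\ref{thm:amenBogo}.
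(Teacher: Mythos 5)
Your proposal is correct and follows the same skeleton as the paper's proof: pass to a saturated elementary extension, produce a bounded-index type-definable subgroup $\Gamma\seq (X^*)^4$, use saturation/compactness to find an identity neighborhood $U$ in the compact quotient whose $\widetilde\tau$-preimage lies in $(X^*)^4$, descend to $G$ by elementarity, and conclude by noting that projective determinacy makes $\tau$ Haar measurable, whence it is continuous by the classical theorem on measurable homomorphisms of locally compact groups (the paper cites \cite{Kleppner}, attributing the statement to Banach; your ``Steinhaus--Weil'' step is the same fact). The only substantive divergence is the source of $\Gamma$: the paper invokes Massicot--Wagner \cite{MassWa}, while you re-run Claim 1 of Theorem \ref{thm:amenBogo} via the stabilizer theorem of \cite{MOS}; both apply here, and both need the same preliminary observation, namely that under projective determinacy every definable set of $(G,\cdot,X)$ is Haar measurable, so that $\mu$ genuinely is a left-invariant Keisler measure on the full definable algebra --- this is where the paper uses determinacy, at the very first step, not only in the automatic-continuity endgame where you place it. Two smaller remarks: the Peter--Weyl reduction to a Lie quotient is not needed, since the definition of a non-commutative Bohr neighborhood allows an arbitrary compact group and the paper works directly with $G^*/\Gamma$ (the reduction is harmless, and does make the target Polish, which slightly eases the continuity citation; when passing to the Lie quotient one must, as in the proof of Theorem \ref{thm:amenBogo} via \cite[Lemma 2.9]{CPT}, use compactness again to keep the enlarged kernel inside the definable set $(X^*)^4$); and naming ``sufficiently many'' Haar-measurable predicates beyond $X$ is unnecessary and only aggravates the descriptive-set-theoretic bookkeeping you flag, since definable sets are then projective over arbitrary measurable sets rather than over $X$ alone.
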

\begin{proof}
Since all projective sets are universally measurable, and in particular Haar measurable, any definable set in $(G,\cdot,X)$ is Haar measurable, hence in the domain of $\mu$. Let $(G^*,\cdot,X^*)$ be an $\aleph_1$-saturated elementary extension. We obtain a $\bigwedge$-definable subgroup $\Gamma$ contained in $(X^*)^4$, using Massicot-Wagner \cite{MassWa}. (We note in passing that adding expectation quantifiers, i.e., making the measure definable, remains within the projective set hierarchy, so that $\mu$ may be taken definable as a measure; but this is not needed in \cite{MassWa}.) Now $\Gamma$ is the kernel of a definable homomorphism $\tau^*\colon G^*\to C$ where $C$ is a compact Hausdorff group. Thus the pre-image of an open set is $\bigvee$-definable. By saturation, we have an open identity neighborhood $U$ in $C$ such that $(\tau^*)\inv(U)\seq (X^*)^4$. Let $\tau$ be the restriction of $\tau^*$ to $G$. Then $\tau$ pulls back an open set to a set in the $\Sigma$-algebra of definable sets, so that it is a measurable map. It follows that the homomorphism $\tau$ is actually continuous; see \cite{Kleppner} where this statement is attributed to Banach, and a more general statement is proved. Moreover, $\tau\inv(U)\seq X^4$. Indeed, if $x\in G$ with $\tau(x)\in U$, then $x\in (X^*)^4$; but $G\preceq G^*$, so $x\in X^4$. Thus $\tau\inv(U)$ is a non-commutative Bohr neighborhood contained in $X^4$.
\end{proof}

\begin{remark}
In place of amenability of $G$, we could assume the existence of a left-invariant finitely additive measure on Haar-measurable sets, which is not necessarily finite but satisfies $\mu(X)>0$ and $\mu(X^3)<\infty$. In this case, the proof recovers the softer ``Lie model" part of the beautiful results of Carolino \cite{Carolino-thesis} and Machado \cite{Machado} (but for Borel sets, possibly of Haar measure zero, not necessarily open or closed).
\end{remark}

\begin{remark}
A uniform strengthening of Proposition \ref{prop:topamen} along the lines of Theorem \ref{thm:amenBogo} should be attainable under suitable development of the tools used above. For example, note that Kazhdan's theorem (Theorem \ref{thm:Kaz}) holds in the topological setting. We also point out that in the discrete case, so when $G$ is a countable amenable group, Proposition \ref{prop:topamen} follows from  the main result of \cite{BBF} (proved first for $\Z$ in \cite{BFW}). In particular,  Theorem 3 of \cite{BBF} implies that $X^2$ contains a ``piecewise Bohr" set, and thus $X^4$ contains a noncommutative Bohr neighborhood by an elementary covering argument similar to (1) and (2) in Remark \ref{rem:covering}.
\end{remark}


\begin{thebibliography}{1}

\bibitem{AGG}
M.~A. Alekseev, L.~Y. Glebski\u\i, and E.~I. Gordon, \emph{On approximations of
  groups, group actions and {H}opf algebras}, Zap. Nauchn. Sem. S.-Peterburg.
  Otdel. Mat. Inst. Steklov. (POMI) \textbf{256} (1999), no.~Teor. Predst. Din.
  Sist. Komb. i Algoritm. Metody. 3, 224--262, 268. \MR{1708567}
  
  \bibitem{BaGoPy}
L\'aszl\'o Babai, Albert~J. Goodman, and L\'aszl\'o Pyber, \emph{Groups without
  faithful transitive permutation representations of small degree}, J. Algebra
  \textbf{195} (1997), no.~1, 1--29. \MR{1468882}
  
  
  
  \bibitem{BBF}
M. Beiglb\"ock, V. Bergelson, and A. Fish, \emph{Sumset phenomenon in countable
  amenable groups}, Adv. Math. \textbf{223} (2010), no.~2, 416--432.
  \MR{2565535}


  
  \bibitem{BBHU}
I. Ben~Yaacov, A. Berenstein, C.~W. Henson, and A. Usvyatsov, \emph{Model
  theory for metric structures}, Model theory with applications to algebra and
  analysis. {V}ol. 2, London Math. Soc. Lecture Note Ser., vol. 350, Cambridge
  Univ. Press, Cambridge, 2008, pp.~315--427. \MR{2436146 (2009j:03061)}
  
  
  \bibitem{BFW}
V. Bergelson, H. Furstenberg, and B. Weiss, \emph{Piecewise-{B}ohr sets of
  integers and combinatorial number theory}, Topics in discrete mathematics,
  Algorithms Combin., vol.~26, Springer, Berlin, 2006, pp.~13--37. \MR{2249261}

\bibitem{BjGr}
M. Bj\"{o}rklund and J.~T. Griesmer, \emph{Bohr sets in triple products of
  large sets in amenable groups}, J. Fourier Anal. Appl. \textbf{25} (2019),
  no.~3, 923--936. \MR{3953491}

\bibitem{BGT}
E. Breuillard, B. Green, and T. Tao, \emph{The structure of approximate
  groups}, Publ. Math. Inst. Hautes \'Etudes Sci. \textbf{116} (2012),
  115--221. \MR{3090256}
  


\bibitem{Carolino-thesis}
P.~K. Carolino, \emph{The structure of locally compact approximate groups},
  Ph.D. thesis, University of California--Los Angeles, 2015.


\bibitem{ChPi}
N. Chavarria and A. Pillay, \emph{On pp-elimination and stability in a
  continuous setting}, Ann. Pure Appl. Logic \textbf{174} (2023), no.~5, Paper
  No. 103258, 14. \MR{4554677}
  
 \bibitem{CoBogo}
G. Conant, \emph{On finite sets of small tripling or small alternation in
  arbitrary groups}, Combin. Probab. Comput. \textbf{29} (2020), no.~6,
  807--829. \MR{4173133}
  
    \bibitem{CP-AVSAR}
G. Conant and A. Pillay, \emph{An analytic version of stable arithmetic regularity}, arXiv.2401.14363, 2024.
  
 \bibitem{CPT}
G. Conant, A. Pillay, and C. Terry, \emph{Structure and regularity for subsets
  of groups with finite {VC}-dimension}, J. Eur. Math. Soc. (JEMS) \textbf{24}
  (2022), no.~2, 583--621. \MR{4382479}
  
  
  \bibitem{CurRein}
C.~W. Curtis and I. Reiner, \emph{Representation theory of finite groups and
  associative algebras}, Pure and Applied Mathematics, Vol. XI, Interscience
  Publishers (a division of John Wiley \& Sons, Inc.), New York-London, 1962.
  \MR{0144979}
  
  \bibitem{Dixmier}
J. Dixmier, \emph{Les moyennes invariantes dans les semi-groupes et leurs
  applications}, Acta Sci. Math. (Szeged) \textbf{12} (1950), 213--227.
  \MR{37470}


\bibitem{EllisRL}
R.~L. Ellis, \emph{Extending continuous functions on zero-dimensional spaces},
  Math. Ann. \textbf{186} (1970), 114--122. \MR{261565}


  
 \bibitem{GoLo}
I. Goldbring and V.~C. Lopes, \emph{Pseudofinite and pseudocompact metric
  structures}, Notre Dame J. Form. Log. \textbf{56} (2015), no.~3, 493--510.
  \MR{3373616}
  
    \bibitem{GowQRG}
W.~T. Gowers, \emph{Quasirandom groups}, Combin. Probab. Comput. \textbf{17}
  (2008), no.~3, 363--387. \MR{2410393}

\bibitem{GreenSLAG}
B. Green, \emph{A {S}zemer\'edi-type regularity lemma in abelian groups, with
  applications}, Geom. Funct. Anal. \textbf{15} (2005), no.~2, 340--376.
  \MR{2153903}
  
  \bibitem{Hall}
P. Hall, \emph{Some constructions for locally finite groups}, J. London Math.
  Soc. \textbf{34} (1959), 305--319. \MR{0162845}

\bibitem{Hatcher}
A. Hatcher, \emph{Algebraic topology}, Cambridge University Press, Cambridge,
  2002. \MR{1867354}


\bibitem{HinSt}
N. Hindman and D. Strauss, \emph{Density and invariant means in left amenable
  semigroups}, Topology Appl. \textbf{156} (2009), no.~16, 2614--2628.
  \MR{2561213}


\bibitem{HruAG}
E. Hrushovski, \emph{Stable group theory and approximate subgroups}, J. Amer.
  Math. Soc. \textbf{25} (2012), no.~1, 189--243. \MR{2833482}
  
  \bibitem{Iltis}
R. Iltis, \emph{Some algebraic structure in the dual of a compact group},
  Canad. J. Math. \textbf{20} (1968), 1499--1510. \MR{0232892}




\bibitem{Jech}
Thomas Jech, \emph{Set theory}, second ed., Perspectives in Mathematical Logic,
  Springer-Verlag, Berlin, 1997. \MR{1492987}
  
  \bibitem{Jordan}
C. Jordan, \emph{M{\'e}moire sur les {\'e}quations diff{\'e}rentielles
  line{\'e}aires {\`a} int{\'e}grale alg{\'e}brique}, J. Reine Agnew. Math.
  \textbf{84} (1878), 89--215.
  
 \bibitem{JuschMon}
K. Juschenko and N. Monod, \emph{Cantor systems, piecewise translations and
  simple amenable groups}, Ann. of Math. (2) \textbf{178} (2013), no.~2,
  775--787. \MR{3071509}


  
 \bibitem{Kazh}
D. Kazhdan, \emph{On {$\varepsilon $}-representations}, Israel J. Math.
  \textbf{43} (1982), no.~4, 315--323. \MR{693352}
  
  
  \bibitem{Kechris}
A.~S. Kechris, \emph{Classical descriptive set theory}, Graduate Texts in
  Mathematics, vol. 156, Springer-Verlag, New York, 1995. \MR{1321597}

\bibitem{Kleppner}
A. Kleppner, \emph{Measurable homomorphisms of locally compact groups}, Proc.
  Amer. Math. Soc. \textbf{106} (1989), no.~2, 391--395. \MR{948154}


\bibitem{Knappbook}
A.~W. Knapp, \emph{Lie groups beyond an introduction}, second ed., Progress in
  Mathematics, vol. 140, Birkh\"{a}user Boston, Inc., Boston, MA, 2002.
  \MR{1920389}
  
  \bibitem{Kuratowski}
K. Kuratowski, \emph{Topology. {V}ol. {I}.}, new ed., Academic Press, New
  York-London; Pa\'{n}stwowe Wydawnictwo Naukowe [Polish Scientific
  Publishers], Warsaw,, 1966, Translated from the French by J. Jaworowski.
  \MR{217751}
  



\bibitem{LOST}
M.~W. Liebeck, E.~A. O'Brien, A. Shalev, and P.~H. Tiep, \emph{The {O}re
  conjecture}, J. Eur. Math. Soc. (JEMS) \textbf{12} (2010), no.~4, 939--1008.
  \MR{2654085}

\bibitem{Machado}
S. Machado, \emph{Closed approximate subgroups: compactness, amenability and
  approximate lattices}, arXiv:2011.01829, 2020.



  
  \bibitem{MarZelFSG}
C.~Martinez and E.~Zelmanov, \emph{Products of powers in finite simple groups},
  Israel J. Math. \textbf{96} (1996), no.~part B, 469--479. \MR{1433702}
  
 \bibitem{MassWa}
J.-C. Massicot and F.~O. Wagner, \emph{Approximate subgroups}, J. \'Ec.
  polytech. Math. \textbf{2} (2015), 55--64. \MR{3345797}



\bibitem{MOS}
S. Montenegro, A. Onshuus, and P. Simon, \emph{Stabilizers, {${\rm NTP}_2$}
  groups with f-generics, and {PRC} fields}, J. Inst. Math. Jussieu \textbf{19}
  (2020), no.~3, 821--853. \MR{4094708}
  
 \bibitem{Newman}
M. Newman, \emph{Two classical theorems on commuting matrices}, J. Res. Nat.
  Bur. Standards Sect. B \textbf{71B} (1967), 69--71. \MR{220748}

\bibitem{NiPy}
N. Nikolov and L. Pyber, \emph{Product decompositions of quasirandom groups and
  a {J}ordan type theorem}, J. Eur. Math. Soc. (JEMS) \textbf{13} (2011),
  no.~4, 1063--1077. \MR{2800484}

\bibitem{NST}
N. Nikolov, J. Schneider, and A. Thom, \emph{Some remarks on finitarily
  approximable groups}, J. \'Ec. polytech. Math. \textbf{5} (2018), 239--258.
  \MR{3749196}
  
 \bibitem{NiSe}
N. Nikolov and D. Segal, \emph{Generators and commutators in finite groups;
  abstract quotients of compact groups}, Invent. Math. \textbf{190} (2012),
  no.~3, 513--602. \MR{2995181}
  
 \bibitem{OHPo}
A. Ould~Houcine and F. Point, \emph{Alternatives for pseudofinite groups}, J.
  Group Theory \textbf{16} (2013), no.~4, 461--495. \MR{3100996}

\bibitem{PalPFS}
D. Palac\'{\i}n, \emph{On compactifications and product-free sets}, J. Lond.
  Math. Soc. (2) \textbf{101} (2020), no.~1, 156--174. \MR{4072489}

\bibitem{Paterson}
A.~L.~T. Paterson, \emph{Amenability}, Mathematical Surveys and Monographs,
  vol.~29, American Mathematical Society, Providence, RI, 1988. \MR{961261}
  
 \bibitem{Pibook}
A. Pillay, \emph{An introduction to stability theory}, Oxford Logic Guides,
  vol.~8, The Clarendon Press Oxford University Press, New York, 1983.
  \MR{719195 (85i:03104)}
  
  \bibitem{PilCLG}
A. Pillay, \emph{Type-definability, compact {L}ie groups, and o-minimality}, J.
  Math. Log. \textbf{4} (2004), no.~2, 147--162. \MR{2114965}

\bibitem{PiRCP}
A. Pillay, \emph{Remarks on compactifications of pseudofinite groups}, Fund.
  Math. \textbf{236} (2017), no.~2, 193--200. \MR{3591278}
 
\bibitem{Ruz94}
I.~Z. Ruzsa, \emph{Generalized arithmetical progressions and sumsets}, Acta
  Math. Hungar. \textbf{65} (1994), no.~4, 379--388. \MR{1281447}
 

 
  
  \bibitem{SaxWilFSG}
Jan Saxl and John~S. Wilson, \emph{A note on powers in simple groups}, Math.
  Proc. Cambridge Philos. Soc. \textbf{122} (1997), no.~1, 91--94. \MR{1443588}
  
\bibitem{SchTh-lattice}
J. Schneider and A. Thom, \emph{A note on the normal subgroup lattice of
  ultraproducts of finite quasisimple groups}, Proc. Amer. Math. Soc.
  \textbf{149} (2021), no.~5, 1929--1942. \MR{4232187}

\bibitem{SchurJST}
I. Schur, \emph{Uber {G}ruppen periodischer linearer {S}ubstitutionen},
  Sitzber. Preuss. Akad. Wiss. (1911), 619--627.

\bibitem{StoTh}
Abel Stolz and Andreas Thom, \emph{On the lattice of normal subgroups in
  ultraproducts of compact simple groups}, Proc. Lond. Math. Soc. (3)
  \textbf{108} (2014), no.~1, 73--102. \MR{3162821}

\bibitem{Szarek}
S.~a.~J. Szarek, \emph{Metric entropy of homogeneous spaces}, Quantum
  probability ({G}da\'{n}sk, 1997), Banach Center Publ., vol.~43, Polish Acad.
  Sci. Inst. Math., Warsaw, 1998, pp.~395--410. \MR{1649741}

\bibitem{TaoICM}
T. Tao, \emph{The dichotomy between structure and randomness}, presented at the
  2006 International Congress of Mathematicians, Madrid, available at:
  \url{https://www.math.ucla.edu/~tao/preprints/Slides/icmslides2.pdf}.

\bibitem{TaoH5P}
T. Tao, \emph{Hilbert's fifth problem and related topics}, Graduate Studies in
  Mathematics, vol. 153, American Mathematical Society, Providence, RI, 2014.
  \MR{3237440}

\bibitem{TaoVu}
T. Tao and V. Vu, \emph{Additive combinatorics}, Cambridge Studies in Advanced
  Mathematics, vol. 105, Cambridge University Press, Cambridge, 2006.
  \MR{2289012}
  
 \bibitem{TeZi}
K. Tent and M. Ziegler, \emph{A course in model theory}, Lecture Notes in
  Logic, vol.~40, Association for Symbolic Logic, La Jolla, CA, 2012.
  \MR{2908005}
  
 \bibitem{Tits}
J. Tits, \emph{Free subgroups in linear groups}, J. Algebra \textbf{20} (1972),
  250--270. \MR{286898}

\bibitem{Turing}
A.~M. Turing, \emph{Finite approximations to {L}ie groups}, Ann. of Math. (2)
  \textbf{39} (1938), no.~1, 105--111. \MR{1503391}

\bibitem{WilSPfG}
John~S. Wilson, \emph{On simple pseudofinite groups}, J. London Math. Soc. (2)
  \textbf{51} (1995), no.~3, 471--490. \MR{1332885}
  
\end{thebibliography}
\end{document}